\definecolor{lightblue}{rgb}{.1,0.35,.8}
\letcs\replicate{prg_replicate:nn}
\theoremstyle{plain}
\newtheorem{proposition}{Proposition}[section]
\newtheorem{theorem}{Theorem}[section]
\newtheorem{corollary}{Corollary}[section]
\newtheorem{lemma}{Lemma}[section]
\theoremstyle{definition}
\newtheorem{definition}{Definition}[section]
\theoremstyle{remark}
\newtheorem{remark}{Remark}[section]
\let\c@corollary=\c@theorem
\let\c@proposition=\c@theorem
\let\c@lemma=\c@theorem
\let\c@remark=\c@theorem
\let\c@definition=\c@theorem
\let\c@notation=\c@theorem
\let\c@construction=\c@theorem
\let\c@example=\c@theorem
\let\c@equation\c@theorem
\let\c@question\c@theorem
\newcommand{\wt}{\widetilde}
\newcommand{\ol}{\overline}
\newcommand{\sm}{\setminus}
\newcommand{\Q}{\mathbb{Q}}
\newcommand{\Z}{\mathbb{Z}}
\newcommand{\C}{\mathbb{C}}
\newcommand{\R}{\mathbb{R}}
\newcommand{\CP}{\operatorname{\C P}}
\newcommand{\RP}{\operatorname{\R P}}
\newcommand{\Id}{\operatorname{Id}}
\newcommand{\Ext}{\operatorname{Ext}}
\newcommand{\Hom}{\operatorname{Hom}}
\newcommand{\coker}{\operatorname{coker}}
\newcommand{\BTOPSpin}{\operatorname{BTOPSpin}}
\newcommand{\TOPSpin}{\operatorname{TOPSpin}}
\newcommand{\BSTOP}{\operatorname{BSTOP}}
\DeclareMathOperator\pt{pt}
\DeclareMathOperator\pr{pr}
\DeclareMathOperator\Sq{Sq}
\DeclareMathOperator\Wh{Wh}
\renewcommand{\epsilon}{\varepsilon}
\renewcommand{\phi}{\varphi}
\begin{document}
\title{Simple spines of homotopy $2$-spheres are unique}

\author[Patrick Orson]{Patrick Orson}
\address{Department of Mathematics, California Polytechnic State University, USA}
\email{porson@calpoly.edu}

\author[Mark Powell]{Mark Powell}
\address{School of Mathematics and Statistics, University of Glasgow, UK}
\email{mark.powell@glasgow.ac.uk}

\def\subjclassname{\textup{2020} Mathematics Subject Classification}
\expandafter\let\csname subjclassname@1991\endcsname=\subjclassname
\subjclass{
57K40, 
57N35. 
}
\keywords{Simple spines, isotopy}

\begin{abstract}
A locally flatly embedded $2$-sphere in a compact $4$-manifold $X$ is called a spine if the inclusion map is a homotopy equivalence. A spine is called simple if the complement of the $2$-sphere has abelian fundamental group. We prove that if two simple spines represent the same generator of $H_2(X)$ then they are ambiently isotopic. In particular, the theorem applies to simple shake-slicing $2$-spheres in knot traces.
\end{abstract}

\maketitle

\section{Introduction}

In this article, unless otherwise specified, we work in the category of topological manifolds, and embeddings are assumed to be locally flat. Let $X$ be a compact 4-manifold homotopy equivalent to $S^2$. A \emph{spine} of $X$ is an embedded, oriented sphere $S \subseteq X$ such that $[S] \in H_2(X) \cong \Z$ is a generator. A spine is called~\emph{simple}   if    $\pi_1(X\sm S)$ is abelian.

It is straightforward to build many interesting compact $4$-manifolds homotopy equivalent to~$S^2$. Indeed, given a knot $K\subseteq S^3$ and an integer~$n$, the \emph{knot trace} $X_n(K)$, formed by attaching a 2-handle $D^2 \times D^2$ to the boundary of the $4$-ball, with attaching circle $K$ and framing~$n$, is an example of a $4$-manifold homotopy equivalent to~$S^2$.
Together with Feller, Miller, Nagel, and Ray, we gave algebraic criteria which hold if and only if $X_n(K)$ admits a simple spine~\cite{FMNOPR}. Our main theorem gives the corresponding uniqueness result. Moreover, our uniqueness result holds not only for knot traces, but for arbitrary homotopy 2-spheres.

\begin{theorem}\label{theoremA}
  Let $X\simeq S^2$ be a compact 4-manifold. Let $S_0$ and $S_1$ be simple spines with $[S_0]=[S_1] \in H_2(X)$ a generator.  Then $S_0$ and $S_1$ are ambiently isotopic
 via an isotopy that restricts to the identity on~$\partial X$.
\end{theorem}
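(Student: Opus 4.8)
The plan is to study the exteriors of the two spines. First I would show that these exteriors are homeomorphic rel $\partial X$ and compatibly with the meridians, using $4$-dimensional topological surgery; this works because simplicity forces the exterior to have cyclic, and in particular good, fundamental group. Then I would promote the resulting self-homeomorphism of $X$ to an ambient isotopy, which is the main difficulty. The hypothesis $[S_0]=[S_1]$ enters by making the normal data of the two spines agree; simplicity enters by making surgery and pseudoisotopy theory applicable.

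\emph{Step 1: the exterior.} Set $n := [S_0]\cdot[S_0] = [S_1]\cdot[S_1] \in \Z$. A tubular neighbourhood $\nu S_i$ is the $D^2$-bundle over $S^2$ of Euler number $n$, so there is an orientation-preserving bundle isomorphism identifying $\partial\nu S_0$ and $\partial\nu S_1$ with a fixed model $Y_n$ (the circle bundle over $S^2$ of Euler number $n$) and carrying the meridian of $S_0$ to that of $S_1$. Write $W_i := X \sm \Int\nu S_i$, so $\partial W_i = \partial X \sqcup Y_n$. Since $\pi_1 X = \pi_1 \nu S_i = 1$ and $\pi_1(\partial\nu S_i)$ is generated by the meridian, van Kampen's theorem shows $\pi_1 W_i$ is normally generated by the meridian; simplicity of $S_i$ then forces $\pi_1 W_i$ to be abelian, hence cyclic, generated by the meridian, and a homology computation identifies it with $\Z/n$ (with $\Z/0 := \Z$). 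In particular $\pi_1 W_i$ is a good group, and the inclusion $Y_n \hookrightarrow W_i$ is a $\pi_1$-isomorphism.

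\emph{Step 2: the exteriors are homeomorphic.} A homotopy-theoretic argument (analysing the cofibre sequence $W_i \hookrightarrow X \to \Th(\nu S_i)$, whose third term and whose second map depend only on $n$) shows that the homotopy type of $W_i$ relative to $\partial W_i$ is independent of $i$; so there is a homotopy equivalence of triads $(W_0; \partial X, Y_n) \to (W_1; \partial X, Y_n)$ restricting to the identity on $\partial X$ and to the fixed identification on $Y_n$. Now I would apply topological surgery. Because $\pi_1(Y_n) \to \pi_1 W_1$ is an isomorphism and $\pi_1 W_1 = \Z/n$ is good, the topological relative $\pi$--$\pi$ theorem (Wall, in the topological category for good groups by Freedman--Quinn) gives that the structure set $\mathcal{S}^{\TOP}(W_1 \text{ rel } \partial X)$ is a single point; crucially the $\pi$--$\pi$ theorem involves no simple-homotopy decoration, so the possible non-vanishing of $\Wh(\Z/n)$ is not an obstruction. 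Hence the homotopy equivalence above is homotopic rel $\partial X$ to a homeomorphism $F_0\colon W_0 \to W_1$, which after a collar adjustment restricts on $Y_n$ to the fixed identification. Extending $F_0$ over the normal bundles by a bundle isomorphism $\nu S_0 \to \nu S_1$ gives a homeomorphism $F\colon X \to X$ with $F|_{\partial X} = \operatorname{id}$ and $F(S_0) = S_1$.

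\emph{Step 3: from a homeomorphism to an isotopy.} This is where I expect the main obstacle. One cannot argue directly that $F \simeq \operatorname{id}_X$ rel $\partial X$: for a $4$-manifold with boundary, the facts that $F$ acts trivially on $\pi_1$ and (since $F$ fixes the generator $[S_0]=[S_1]$) on $H_*$ do not imply this. Instead I would extract from Step 2 not merely a homeomorphism but a pseudoisotopy: triviality of $\mathcal{S}^{\TOP}(W_1 \text{ rel } \partial X)$ yields an $s$-cobordism $(Z; W_0, W_1)$ rel $\partial X$ compatible with $F_0$, which by the topological $s$-cobordism theorem (again using that $\pi_1 = \Z/n$ is good) is a product $W_0 \times [0,1]$; gluing the product cobordism $\nu S_0 \times [0,1]$ onto $Z$ along the $Y_n$-part of its boundary upgrades this to a homeomorphism $X \times [0,1] \to X\times[0,1]$ that is the identity on $X\times\{0\} \cup \partial X\times[0,1]$ and equals $F$ on $X \times\{1\}$, carrying the product concordance $S_0\times[0,1]$ onto a locally flat concordance from $S_0$ to $S_1$. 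In other words $F$ is pseudoisotopic to $\operatorname{id}_X$ rel $\partial X$. Finally I would invoke Quinn's theorem that homeomorphisms of a compact $4$-manifold with good fundamental group that are pseudoisotopic rel boundary are isotopic rel boundary; with the good group $\pi_1 X = 1$ this produces an isotopy $h_t$ of $X$ rel $\partial X$ with $h_0 = \operatorname{id}_X$ and $h_1 = F$. Then $t \mapsto h_t(S_0)$ is an ambient isotopy, restricting to the identity on $\partial X$, from $S_0$ to $F(S_0) = S_1$, completing the proof.
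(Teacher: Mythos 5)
Your overall plan matches the paper's --- show the spine exteriors are homeomorphic rel.\ boundary, deduce a homeomorphism of pairs $(X,S_0)\cong (X,S_1)$, then upgrade to an ambient isotopy --- but both middle steps have gaps. In Step 2 you assert via the cofibre sequence $W_i\hookrightarrow X \to \Th(\nu S_i)$ that the homotopy type of $W_i$ rel.\ boundary depends only on $n$, but one cannot reconstruct the source of a map from its cofibre, and the existence of a homotopy equivalence of triads restricting to the \emph{prescribed} identification on $Y_n$ (and, for $n=0$, matching on $\pi_2$ and on $k$-invariants) is exactly the delicate point. The paper spends most of \autoref{sec:obstruction} on this boundary compatibility: one must choose the disc-bundle trivialisations $G_i\colon \overline{\nu}S_i\cong D^2\widetilde{\times}_n S^2$ so that the meridian of $S_i$ lies in $H_1(W_i)$ compatibly with $H_1(\partial X)$ for $i=0,1$ simultaneously (\autoref{lem:ellcompatible}, proved via \autoref{lem:regularhomotopy} by tracking meridional markings through a regular homotopy), and, when $n=0$, also so that the $\pi_2$-isomorphisms to $\CP^\infty$ agree. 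The paper then avoids needing any homotopy equivalence of $W_0$ with $W_1$ by using Kreck's modified surgery: a bordism over the normal $2$-type $(B(n),p_n)$ of \autoref{prop:2typeforexterior} suffices, and this is detected by the computable group $\Omega_4^{\TOPSpin}(P(n))$.

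Step 3 has a structural flaw. The $\pi$-$\pi$ theorem applies only with $Y_n$ a \emph{free} boundary component, so the resulting $s$-cobordism $Z$ is rel.\ $\partial X$ only; the piece of $\partial Z$ lying over $Y_n$ is a $4$-dimensional $s$-cobordism between copies of $Y_n$, not a priori $Y_n\times[0,1]$. As the $4$-dimensional $s$-cobordism theorem is open, you cannot conclude this piece is a product, so the gluing $Z\cup(\nu S_0\times[0,1])$ need not yield $X\times[0,1]$ and no pseudoisotopy of $X$ has been produced. To force the $Y_n$-piece to be a product one must run surgery rel.\ the \emph{full} boundary $\partial X\sqcup Y_n$, as the paper does via rel.-boundary modified surgery and \autoref{thm:kreckmain}; this reintroduces an obstruction in $L_5^s(\Z[\pi_1 W_i])$, which vanishes for $n\neq 0$ and, for $n=0$, is killed by internal $S^1\times E_8$ sums with the cobordism. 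Finally, the passage from pseudoisotopy rel.\ boundary to isotopy rel.\ boundary for compact simply connected $4$-manifolds with nonempty boundary is not an off-the-shelf Quinn citation; it is essentially the content of the companion paper \cite{OP-mcgs}, and the paper invokes its Corollary~C (\autoref{corollaryC}) directly for the homeomorphism $F\colon X\to X$ rather than reconstructing a pseudoisotopy by hand.
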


To prove Theorem~\ref{theoremA} we first show, using the methods of modified surgery theory~\cite{MR1709301}, that the spheres $S_0$ and $S_1$ have homeomorphic exteriors rel.\ boundary. This implies there is an orientation preserving homeomorphism of pairs $(X,S_0) \cong (X,S_1)$.  We then deduce ambient isotopy by applying work from ~\cite{OP-mcgs} to show that this homeomorphism is isotopic rel.\ boundary to the identity. In \cite{OP-mcgs} we computed the topological mapping class group of compact, simply connected 4-manifolds with nonempty boundary.  The relevant part of that computation for the present paper is the following.

\begin{theorem}[{\cite[Corollary~C]{OP-mcgs}}]\label{corollaryC}
  Let $X$ be a compact, simply connected 4-manifold such that $\partial X$ is connected and has the rational homology of either $S^3$ or $S^1\times S^2$. Let $F \colon X \to X$ be a homeomorphism that restricts to the identity on $\partial X$ and is such that $F_* = \Id \colon H_2(X) \to H_2(X)$. Then $F$ is topologically isotopic rel.\ boundary to the identity map of~$X$.
\end{theorem}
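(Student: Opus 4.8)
The plan is to reduce the statement, in two stages, to a vanishing result in surgery theory, following the computation of $\pi_0\Homeo_\partial(X)$ carried out in \cite{OP-mcgs}. The first stage passes from isotopy to \emph{pseudo}-isotopy. Since $\pi_1(X)=1$ is a good group in the sense of Freedman, Quinn's topological isotopy theorem for $4$-manifolds, in its version rel boundary, applies: a homeomorphism of $X$ that restricts to the identity on $\partial X$ and is topologically pseudo-isotopic rel $\partial X$ to $\Id_X$ is in fact topologically isotopic rel $\partial X$ to $\Id_X$. So it suffices to construct a homeomorphism $\Phi\colon X\times[0,1]\to X\times[0,1]$ that equals $F$ on $X\times\{0\}$, equals $\Id$ on $X\times\{1\}$, and equals $\Id$ on $\partial X\times[0,1]$; equivalently, to extend the self-homeomorphism $F\cup_{\partial X}\Id$ of $\partial(X\times[0,1])$ over $X\times[0,1]$.

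For the second stage I would feed the extension problem into Kreck's modified surgery machinery \cite{MR1709301}, applied to $X\times[0,1]$ rel the remainder of its boundary. Being a homeomorphism, $F$ preserves the normal $1$-type of $X$, and since $F_*=\Id$ on $H_2(X)$ --- hence, using $\pi_1(X)=0$ and the connectedness of $\partial X$, on all of $H_*(X)$ and $H_*(X,\partial X)$ --- the normal and homological data attached to $F$ agree with those attached to $\Id_X$. Modified surgery then reduces the existence of $\Phi$ to the vanishing of a surgery obstruction living in a Wall group $L_*(\Z[\pi_1 X])$; as $\pi_1(X)=1$ this is $L_*(\Z)$, which is zero in the relevant degrees $5$ and $6$, and no Whitehead-torsion obstruction arises because $\Wh(1)=0$. (As a sanity check one also verifies directly that $F\simeq\Id_X$ rel $\partial X$: on a relative CW structure for $(X,\partial X)$ the obstructions lie in $H^k(X,\partial X;\pi_k X)$ for $1\le k\le 4$, vanishing for $k=1$ since $\pi_1 X=0$, for $k=3$ since $H_1(X;\pi_3 X)=0$, for $k=2$ since $F_*=\Id$ on $\pi_2 X=H_2 X$, and for $k=4$, where the obstruction lies in $H^4(X,\partial X;\pi_4 X)\cong\pi_4(X)$, because $F$ is a homeomorphism of the Poincaré pair rather than merely a homotopy self-equivalence.)

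The hypothesis on $\partial X$ is used --- and, I expect, the main obstacle lies --- in ensuring that this obstruction genuinely lives only in $L_*(\Z)=0$, rather than in a larger group incorporating contributions from the boundary: one must show that the relative $L$-theory and normal-invariant terms attached to $\partial X$ in the modified-surgery exact sequence vanish, which is exactly what the assumption that $\partial X$ has the rational homology of $S^3$ or of $S^1\times S^2$ delivers, by keeping the rational homological data of the pair $(X,\partial X)$ simple enough. Carrying out this bookkeeping carefully is the delicate step. Once the obstruction vanishes, the extension $\Phi$ exists, and Quinn's theorem from the first stage converts it into the isotopy rel $\partial X$ required by the statement; a final, more minor point that also needs attention is the $k=4$ homotopy claim used as a sanity check above, which should be justified via the homotopy classification of self-equivalences of simply connected $4$-dimensional Poincaré pairs.
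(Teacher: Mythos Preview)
This theorem is not proved in the present paper: it is quoted verbatim from \cite[Corollary~C]{OP-mcgs} and used as a black box in Section~\ref{section:the-proof}. So there is no ``paper's own proof'' here to compare your proposal against.

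That said, a few comments on the proposal itself. The two-stage architecture---reduce to pseudo-isotopy, then invoke Quinn's pseudo-isotopy-implies-isotopy theorem for simply connected topological $4$-manifolds---is indeed the strategy of \cite{OP-mcgs}, so you are on the right track structurally. However, the second stage as you have written it contains a genuine error and a genuine gap. The error: $L_6(\Z)\cong L_2(\Z)\cong\Z/2$ is \emph{not} zero, so your parenthetical ``zero in the relevant degrees $5$ and $6$'' is false. Fortunately only $L_5(\Z)=0$ is actually needed, since the cobordism one is trying to improve is $5$-dimensional; but the slip suggests you have not pinned down precisely which surgery problem you are solving. The gap: ``feed the extension problem into Kreck's modified surgery machinery applied to $X\times[0,1]$'' is not a well-posed step. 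Modified surgery compares two manifolds over a common normal type; here one must set up the comparison between $(X,\Id)$ and $(X,F)$ as normal smoothings rel.\ boundary, produce a bordism over the normal $1$-type, and then argue the $L_5$ obstruction vanishes. The boundary hypothesis enters not in the $L$-group (which is already $L_5(\Z)=0$) but earlier, in showing that the relevant bordism group is detected by quantities that $F_*=\Id$ on $H_2$ forces to agree---this is where the rational-homology restriction on $\partial X$ does real work in \cite{OP-mcgs}, and your account of it (``keeping the rational homological data simple enough'') is too vague to be a proof.
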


Theorem~\ref{theoremA} can be compared to other topological uniqueness results for surfaces embedded in $4$-manifolds. The earliest example is the theorem of Freedman and Quinn~\cite[Theorem~11.7A]{FQ}, which states that any pair of embedded $2$-spheres $S_0,S_1\subseteq S^4$ with $\pi_1(S^4\sm S_i)\cong\Z$ must be isotopic.
Lee-Wilczy\'{n}ski~\cite[Theorem~1.2~and~Corollary~1.3]{lee-wilczynski} and Hambleton-Kreck~\cite[Theorems~4.5~and~4.8]{hambleton-kreck:1993} extended this to give conditions implying homotopic, simple embeddings of 2-spheres in arbitrary closed, simply connected, topological 4-manifolds are topologically isotopic. Results of a similar nature for higher genus surfaces were proven by Sunukjian in~\cite[\textsection 7]{Sunukjian-IMRN}.

More recently, the second named author and Conway proved uniqueness results for slice discs in $D^4$ whose complements have fundamental group $\Z$ or $\Z\ltimes\Z[1/2]$~\cite{MR4300918}. In~\cite{CP}, this was extended to higher genus surfaces in $D^4$ whose complements have fundamental group $\Z$.
These previous isotopy uniqueness results for embedded surfaces in $4$-manifolds with nonempty boundary were restricted to studying the ambient $4$-manifold $D^4$. The reason is that in $D^4$ the required isotopy to the identity can be produced by applying the Alexander trick. Theorem~\ref{corollaryC} is a new mechanism for producing isotopies in manifolds with boundary, making it possible for us to follow the proof strategy above when the boundary is nonempty and the $4$-manifold is not $D^4$.  An initial example appeared in \cite[Theorem~F]{OP-mcgs}, where we observed that one can apply Theorem~\ref{corollaryC} to extend the results of \cite{CP} to give ambient isotopies, under an additional condition.

As one might expect, Theorem~\ref{theoremA} contrasts with the smooth case.  An example of this follows from the work of Hayden~\cite{Hayden}. Let $D_0$ and $D_1$ be the exotic $\Z$-slice discs from~\cite{Hayden}, with common boundary $K \subseteq S^3$. Let $S_0$ and $S_1$ be the simple spines in the knot trace $X_0(K)$ obtained from capping off $D_0$ and $D_1$ respectively with the core of the added 2-handle. By Theorem~\ref{theoremA}, the $2$-spheres  $S_0$ and $S_1$ are topologically isotopic. However, if they were smoothly isotopic, there would be a diffeomorphism of pairs $(X_0(K),S_0) \cong (X_0(K),S_1)$. The results of surgery on $S_0$ and $S_1$ would therefore be diffeomorphic.  But these surgeries result in $D^4 \sm \nu D_0$ and $D^4 \sm \nu D_1$ respectively, and the proof in~\cite{Hayden} showed that these 4-manifolds are in fact not diffeomorphic.

\subsection*{Organisation}

In Section~\ref{section:elements-modified-surgery} we recall the tools from Kreck's modified surgery that we will need.
In Section~\ref{section:properties-2-sphere-exterior} we begin to study simple 2-sphere spines of homotopy 2-spheres, by analysing the homotopy and spin types of their exteriors. As a consequence we determine the normal 2-type of the exterior.
In Section~\ref{sec:obstruction} we begin the modified surgery procedure, fixing two simple spines $S_0$ and $S_1$ and building 3-connected maps from their exteriors to the Postnikov $2$-type, that are moreover compatible with each other on the boundary. In Section~\ref{sec:homeo-between-exteriors} we apply modified surgery to show that $S_0$ and $S_1$ have homeomorphic exteriors.
We complete the proof of Theorem~\ref{theoremA} in Section~\ref{section:the-proof}.

\subsection*{Acknowledgments}
We thank Daniel Kasprowski for encouraging us to prove Lemma~\ref{lem:regularhomotopy}, Lisa Piccirillo for discussions on exotic spines, and Marco Golla for a potentially helpful suggestion which turned out not to be needed.
We also  thank the anonymous referee, whose insightful questions and comments led to improvements in the article.    We both gratefully acknowledge the MPIM, where we were visitors for part of the time this paper was written. MP warmly thanks ETH Z\"urich and Peter Feller for hospitality during a productive visit. PO was partially supported by the SNSF Grant~181199, and  MP was partially supported by EPSRC New Investigator grant EP/T028335/2 and EPSRC New Horizons grant EP/V04821X/2.

\subsection*{Corrigendum}
We thank Anthony Conway for pointing out a mistake in the proof of Theorem~\ref{theoremA} that appeared in the published version of this article. We had incorrectly quoted that the surgery obstruction in Theorem~\ref{thm:kreckmain} was valued in the Wall group $\displaystyle{L^s_5(\Z[\pi])}$, rather than in the correct group $\displaystyle{L^{s,\tau}_5(\Z[\pi])}$. In general these groups are not the same. However, when~$\Wh(\pi)=0$ these groups are the same (see the sequence~\eqref{eq:kreck} in Section~\ref{section:elements-modified-surgery}). Also when~$\Wh(\pi)$ is torsion-free and has trivial involution, these groups are the same (see Proposition~\ref{prop:fiddle}). This is sufficient to show that  $\displaystyle{L^{s,\tau}_5(\Z[\pi])} = \displaystyle{L^{s}_5(\Z[\pi])}$, for the cyclic groups $\pi$ with which we work.


\section{Elements of modified surgery theory}\label{section:elements-modified-surgery}

Many of the arguments we make in this article will use Kreck's modified surgery theory~\cite{MR1709301}. We now collect some definitions and results from this theory, for use later on. In this section, all manifolds are compact and oriented.

A \emph{cobordism rel.\ boundary} $(W,G_0,G_1)$ between $n$-dimensional manifolds with (possibly empty) boundary $X_0$ and $X_1$, with $\partial X_0 \cong Y \cong \partial X_1$, is an $(n+1)$-dimensional manifold $W$, with a decomposition of the boundary into codimension 0 submanifolds
\[
\partial W=\partial_0W\cup(Y\times[0,1])\cup-\partial _1W,
\]
together with homeomorphisms $G\colon (X_i,\partial X_i)\cong (\partial_iW,Y)$ for $i=0,1$.
In case that $Y=\emptyset$, a cobordism rel.~boundary is called a \emph{cobordism}.

Let $B$ be a space with the homotopy type of a CW complex. A map from a manifold $X\to B$ is called a \emph{$B$-structure}. Let $(Y,\zeta)$ be a closed $(n-1)$-dimensional manifold with $B$-structure $\zeta\colon Y\to B$. Given an $n$-manifold $X$, a \emph{$B$-structure rel.~boundary} $(g,\xi)$ (with respect to $(Y,\zeta)$) consists a homeomorphism $g\colon \partial X\cong Y$ and a map $\xi\colon X\to B$ such that $\zeta\circ g=\xi|_{\partial X}$. Given two $n$-manifolds $X_i$, with respective $B$-structures rel.~boundary $(g_i,\xi_i)$, a \emph{$B$-bordism rel.\ boundary} $(W,G_0,G_1,\Xi)$ is a cobordism rel.~boundary $(W,G_0,G_1)$ between $X_0$ and $X_1$, such that $G_i|_{\partial X_i}=g_i$, together with a $B$-structure $\Xi\colon W\to B$ such that $\Xi_i\circ G_i=\xi_i$ for $i=0,1$, and $\Xi|_{Y\times[0,1]}=\zeta\circ \pr_1$.

For an $n$-manifold $X$ we will write the stable topological normal bundle via its classifying map $\nu_X \colon X\to\BSTOP$ (see e.g.~\cite[\textsection 7]{TheGuide} for the definition). Let $(B,p)$ consist of a space $B$ with the homotopy type of a CW complex and $p\colon B\to \BSTOP$ a fibration. A map $\xi\colon X\to B$ is called a \emph{normal $B$-structure} on $X$ if the diagram
\[
\begin{tikzcd}
&& B\ar[d,"p"]\\
X\ar[rru,"\xi"]\ar[rr,"\nu_X"]&&\BSTOP
\end{tikzcd}
\]
commutes up to homotopy.

Let $(Y,\zeta)$ be a closed $(n-1)$-manifold $Y$ with normal $B$-structure. Given an $n$-manifold $X$, a \emph{normal $B$-structure rel.~boundary} $(g,\xi)$ (with respect to $(Y,\zeta)$) is a $B$-structure rel.\ boundary such that $\xi$ is moreover a normal $B$-structure, i.e.~$\xi$ is a lift of the stable normal bundle up to homotopy. Given two $n$-manifolds $X_i$, with respective $B$-structures rel.~boundary $(g_i,\xi_i)$, a \emph{normal $B$-bordism rel.\ boundary} $(W,G_0,G_1,\Xi)$ is a $B$-bordism rel.~boundary, such that $\Xi$ is moreover a normal $B$-structure. In the case that $Y=\emptyset$, we call $(W,G_0,G_1, \Xi)$ a \emph{normal $B$-bordism} and denote the corresponding bordism group of closed $n$-manifolds with normal $B$-structure by~$\Omega_n(B,p)$.

We record the following lemma for use later on; the proof is straightforward from the definitions and we omit it.

\begin{lemma}\label{lem:sanitycheck} Suppose $B$ is a space with the homotopy type of a CW complex and let $p\colon B\to\BSTOP$ be a fibration. Let $(Y,\zeta)$ be a closed $(n-1)$-manifold $Y$ with normal $B$-structure. For $i=0,1$, suppose that $X_i$ is an $n$-manifold with normal $B$-structure rel.\ boundary $(g_i,\xi_i)$. Define a closed $n$-manifold with normal $B$-structure
\[
(X,\xi):=(X_0\cup_{g_1^{-1}\circ g_0}-X_1, \xi_0\cup\xi_1).
\]
Then $(X,\xi)\sim0\in \Omega_n(B,p)$ is null-bordant if and only if $(X_0,g_0,\xi_0)$ and $(X_1, g_1, \xi_1)$ are normally $B$-bordant rel.~boundary.
\end{lemma}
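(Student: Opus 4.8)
The plan is to observe that a normal $B$-bordism rel.\ boundary between $(X_0,g_0,\xi_0)$ and $(X_1,g_1,\xi_1)$ and a null-bordism of $(X,\xi)$ are, up to a standard normalisation, two ways of recording the same $(n+1)$-manifold with normal $B$-structure; they differ only in whether the hypersurface $Y\subseteq X$ along which $X_0$ and $X_1$ are glued carries a collar inside the boundary. First I would reduce to the case in which $\xi_0$ and $\xi_1$ are constant along fixed collars of $\partial X_0$ and $\partial X_1$ — and hence $\xi=\xi_0\cup\xi_1$ is constant, equal to the pullback of $\zeta$, along the resulting bicollar $c\colon Y\times[-1,1]\hookrightarrow X$ of $Y$ (with $c(Y\times[-1,0])\subseteq X_0$ and $c(Y\times[0,1])\subseteq X_1$, chosen compatibly with $g_0$ and $g_1$). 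This reduction changes neither side of the asserted equivalence, since both null-bordism and bordism rel.\ boundary are invariant under homotopies of the $B$-structure rel.\ boundary, as one sees by gluing on a mapping cylinder.

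\emph{From a bordism rel.\ boundary to a null-bordism.} Given a normal $B$-bordism rel.\ boundary $(W,G_0,G_1,\Xi)$, absorb the collar factor $Y\times[0,1]$ of $\partial W=\partial_0W\cup(Y\times[0,1])\cup-\partial_1W$ into $\partial_0W$. Using the homeomorphisms $G_i$ together with $\Xi\circ G_i=\xi_i$ and $\Xi|_{Y\times[0,1]}=\zeta\circ\pr_1$, and the collar-constancy of the $\xi_i$, this exhibits a homeomorphism $\partial W\cong X_0\cup_{g_1^{-1}\circ g_0}-X_1=X$ carrying $\Xi|_{\partial W}$ to $\xi$ on the nose. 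Since $\Xi$ is a normal $B$-structure, $(W,\Xi)$ is the required null-bordism; this direction is essentially immediate.

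\emph{From a null-bordism to a bordism rel.\ boundary.} Given a null-bordism $(Z,\Psi)$ of $(X,\xi)$, note that $Y\subseteq X=\partial Z$ is separating and bicollared; by enlarging a boundary collar of $Z$ we may assume $\Psi$ is the pullback of $\xi$ on a neighbourhood of $\partial Z$, so that on a neighbourhood $Y\times[-1,1]\times[0,1]$ of $Y$ in $Z$ (bicollar times boundary collar) $\Psi$ is the constant pullback of $\zeta$. Then I would excise from $Z$ a thin half-lens neighbourhood of $Y$: delete $\{(y,s,t):|s|<\frac{1}{2},\ 0\le t<\rho(s)\}$, where $\rho$ is a bump function supported on $(-\frac{1}{2},\frac{1}{2})$ and positive on its interior. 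The resulting manifold $W$ is a copy of $Z$, and its boundary decomposes as $\partial_0W\cup A\cup-\partial_1W$ with $\partial_0W=X_0\setminus c(Y\times(-\frac{1}{2},0])\cong X_0$ and $\partial_1W=X_1\setminus c(Y\times[0,\frac{1}{2}))\cong X_1$ via collar-collapsing homeomorphisms $G_i$ (which restrict to $g_i$ on $\partial X_i$ by compatibility of $c$ with $g_0,g_1$), and with $A\cong Y\times[0,1]$ the new parabolic face. Because $\Psi$ is the constant pullback of $\zeta$ on the bicollar neighbourhood and the $\xi_i$ are collar-constant, the map $\Xi:=\Psi|_W$ satisfies $\Xi\circ G_i=\xi_i$ and $\Xi|_{Y\times[0,1]}=\zeta\circ\pr_1$ on the nose, so $(W,G_0,G_1,\Xi)$ is a normal $B$-bordism rel.\ boundary between $(X_0,g_0,\xi_0)$ and $(X_1,g_1,\xi_1)$.

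The main thing to get right is precisely this bookkeeping around the \emph{strict} equalities demanded by the definitions (as opposed to homotopies), which is what the collar normalisations and the compatible choice of bicollar are there to supply; the geometry itself — absorbing, respectively manufacturing, a collar of $Y$ — is elementary. The condition that the maps to $B$ are \emph{normal} $B$-structures transfers for free: it is a homotopy-invariant property of a single map to $B$, and in both directions $W$ and $Z$ are the same manifold carrying the same map to $B$.
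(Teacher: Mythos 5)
Your proof is correct. The paper explicitly omits the proof of this lemma ("the proof is straightforward from the definitions and we omit it"), so there is no route of its own to compare against; your argument supplies exactly the sort of careful collar bookkeeping that the authors declare elementary. The two directions are handled in the standard way: absorbing the $Y\times[0,1]$ collar to pass from a rel.~boundary bordism to a null-bordism, and excising a half-lens dent at $Y\subseteq\partial Z$ to manufacture that collar in the reverse direction, with the preliminary homotopy to collar-constant $B$-structures (justified by mapping cylinders) ensuring the strict equalities the definitions require. The only point worth stating slightly more carefully is that the "collar-collapsing" homeomorphisms $G_i$ restrict to $g_i$ on $\partial X_i$ precisely because the bicollar $c$ was chosen so that $g_i(c(y,0))=y$; you indicate this but a reader might want it spelled out. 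Everything else checks out.
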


A map of spaces $f\colon A\to B$ is \emph{$m$-connected} if $f_*\colon\pi_k(A)\to \pi_k(B)$ is an isomorphism for $k<m$ and is surjective for $k=m$. A map of spaces $f\colon A\to B$ is \emph{$m$-coconnected} if $f_*\colon\pi_k(A)\to \pi_k(B)$ is an isomorphism for $k>m$ and is injective for~$k=m$.

\begin{definition} A \emph{normal $2$-type} for a manifold $X$ is a pair $(B,p)$, where~$B$ is a space with the homotopy type of a CW complex, $p\colon B\to \BSTOP$ is a $3$-coconnected fibration with connected fibre, and $(B,p)$ is such that there exists a $3$-connected normal $B$-structure $\overline{\nu}_X\colon X\to B$. Such a normal $B$-structure $(X,\overline{\nu}_X)$ is called a \emph{normal $2$-smoothing} of $X$. If $(X,g,\overline{\nu}_X)$ is moreover a normal $B$-structure rel.\ boundary, with respect to some $(Y,\zeta)$, we say $(X,g,\overline{\nu}_X)$ is a \emph{normal $2$-smoothing rel.\ boundary}.
\end{definition}

Given a manifold $X$, the existence of a normal $2$-type follows from the theory of Moore-Postnikov decompositions (see~\cite{zbMATH03562120}). This theory furthermore guarantees that any two normal $2$-types for a given $X$ are fibre homotopy equivalent to one another.

\begin{definition}\label{def:hcobordism} An \emph{$h$-cobordism rel.~boundary} between $X_0$ and $X_1$ is a cobordism rel.~boundary $(W,G_0,G_1)$ such that each map $G_i\colon X_i\to W$ is a homotopy equivalence. An $h$-cobordism rel.~boundary is moreover an \emph{$s$-cobordism rel.~boundary} if each homotopy equivalence $G_i$ has vanishing Whitehead torsion.
\end{definition}

In classical surgery theory~\cite{Wall-surgery-book}, obstructions to doing surgery to improve certain classes of maps to
simple homotopy equivalences are situated in the \emph{surgery obstruction groups}. These are abelian groups $\displaystyle{L^s_n(\Z[\pi])}$, depending on the dimension $n$, and fundamental group $\pi$, of the manifold.
A variant of these obstruction groups, denoted $\displaystyle{L^{s,\tau}_n(\Z[\pi])}$, appears in Kreck's modified surgery theory. Kreck~\cite[Lemma~4.5]{kreckmonograph} shows there is an exact sequence
\begin{equation}\label{eq:kreck}
0\to \displaystyle{L^s_n(\Z[\pi])}\to \displaystyle{L^{s,\tau}_n(\Z[\pi])}\to \Wh(\pi).
\end{equation}
\begin{proposition}\label{prop:fiddle}
For any group $\pi$ such that $\Wh(\pi)$ is torsion-free and has trivial involution, the inclusion map $\displaystyle{L^s_n(\Z[\pi])}\to \displaystyle{L^{s,\tau}_n(\Z[\pi])}$ is an isomorphism.
\end{proposition}
\begin{proof}
In~\cite[Lemma~4.5]{kreckmonograph}, Kreck identifies the image of the map
$\displaystyle{L^{s,\tau}_n(\Z[\pi])}\to \Wh(\pi)$ from~\eqref{eq:kreck}
as the subgroup $U\subseteq \Wh(\pi)$, generated by the stable classes of unitary matrices $A$ over~$\Z\pi$. When the involution $\overline{\phantom{x}}\colon\Wh(\pi)\to \Wh(\pi)$ is trivial, the unitary condition~$[A]+\overline{[A]}=0$ implies~$2[A]=0$, so this group is generated by $2$-torsion elements. But if moreover $\Wh(\pi)$ is torsion-free, we deduce $[A]=0$. Hence $U=0$ and~$\displaystyle{L^s_n(\Z[\pi])}\cong \displaystyle{L^{s,\tau}_n(\Z[\pi])}$ by  \eqref{eq:kreck}.
\end{proof}

The following is the main result of modified surgery theory, for the purposes of this paper.

\begin{theorem}[{\cite[Theorem 6.1(b)]{kreckmonograph}}]\label{thm:kreckmain}Suppose $X_0$ and $X_1$ are connected $4$-manifolds, each with boundary homeomorphic to $Y$, fundamental group $\pi$ and normal $2$-type $(B,p)$, where $B$ is homotopy equivalent to a CW complex with finite $2$-skeleton. Let $\zeta\colon Y\to B$ be a normal $B$-structure. Then a normal $B$-bordism rel.\ boundary $(Z,G_0,G_1,\Xi)$, between normal $2$-smoothings $(X_0,g_0,\xi_0)$ and $(X_1,g_1,\xi_1)$ rel.\ boundary determines a surgery obstruction $\theta(Z,\Xi)\in \displaystyle{L^{s,\tau}_5(\Z[\pi])}$. The obstruction $\theta(Z,\Xi)$ vanishes if and only if $(Z,G_0,G_1,\Xi)$ is normally $B$-bordant rel.~boundary to some $(Z',G_0',G_1',\Xi')$, where $Z'$ is an $s$-cobordism rel.~boundary.
\end{theorem}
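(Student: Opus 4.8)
This is Kreck's theorem, and the plan is to run the modified-surgery machine in three stages: first make the normal $B$-structure on the $5$-dimensional bordism $Z$ highly connected by surgery in its interior; then read off a surgery obstruction $\theta(Z,\Xi)\in L^s_5(\Z[\pi])$ from the resulting surgery kernel; and finally argue that this class is the complete obstruction. For the first stage, observe that since $\xi_0\colon X_0\to B$ is $3$-connected and factors through the inclusion $X_0\hookrightarrow Z$, the map $\Xi\colon Z\to B$ is automatically surjective on $\pi_k$ for all $k\le 3$, so one only needs to kill kernels. First do framed surgery on finitely many embedded circles in $\Int Z$ normally generating $\ker(\pi_1(Z)\to\pi_1(B))$ — circles in an oriented $5$-manifold have trivial normal bundle, and the normal $B$-structure supplies compatible framings — after which $\Xi$ is $2$-connected. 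Then surgery on embedded $2$-spheres in $\Int Z$ puts $Z$, relative to $X_0$, into the form of a collar with handles of index only $2$ and $3$ attached, in equal number; the hypothesis that $B$ has finite $2$-skeleton is what keeps the relevant kernel finitely generated over $\Z[\pi]$, so that only finitely many such handles are needed. These middle-dimensional surgeries need not kill the kernel, and that failure is what the obstruction measures.

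For the second stage, in this normal form the relative $\Z[\pi]$-chain complex of $(Z,X_0)$ is concentrated in degrees $2$ and $3$, with both modules finitely generated free of the same rank. Poincaré--Lefschetz duality rel.\ boundary — available precisely because $Z$ is a bordism \emph{rel.\ boundary}, so that $\xi_0$ and $\xi_1$ restrict to the common $\zeta$ on $Y$ — then endows the surgery kernel with the structure of a quadratic formation over $\Z[\pi]$, roughly a hyperbolic form together with the two lagrangians given by the $2$-handle cores and the $3$-handle attaching spheres. One sets $\theta(Z,\Xi)\in L^s_5(\Z[\pi])$ to be the class of this formation, the decoration $s$ recording the preferred bases so that Whitehead torsion is controlled. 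A comparison of the choices made in the first stage — generating sets, choices of embedded circles and spheres, stabilisation by trivial formations and by cancelling handle pairs — then shows that $\theta(Z,\Xi)$ depends only on the normal $B$-bordism class of $(Z,G_0,G_1,\Xi)$ rel.\ boundary.

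For the third stage, suppose $\theta(Z,\Xi)=0$. Then by Wall's realisation of elements of $L_5(\Z[\pi])$ as automorphisms of hyperbolic forms — realised geometrically as the trace of a $6$-dimensional normal $B$-bordism rel.\ boundary built by attaching handles — one obtains a normally $B$-bordant $(Z',G_0',G_1',\Xi')$ whose formation is trivial, so that $H_*(Z',X_0;\Z[\pi])=0$. Then $G_0'$ and $G_1'$ are $\Z[\pi]$-homology equivalences inducing isomorphisms on $\pi_1$, hence homotopy equivalences by the Whitehead theorem, so $Z'$ is an $h$-cobordism rel.\ boundary; the $s$-decoration is exactly what forces the Whitehead torsions to vanish, making $Z'$ an $s$-cobordism rel.\ boundary in the sense of Definition~\ref{def:hcobordism}. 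Conversely, a normal $2$-smoothing rel.\ boundary of an $s$-cobordism rel.\ boundary has vanishing surgery kernel and hence trivial formation, so by the bordism invariance from the second stage, $\theta(Z,\Xi)=0$ whenever $(Z,\Xi)$ is normally $B$-bordant rel.\ boundary to such a $Z'$. This gives both implications.

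The delicate point is the middle-dimensional bookkeeping at $q=2$: one must check that the $2$- and $3$-handle data genuinely assemble into a \emph{nonsingular} formation, that the invariant lands in $L^s_5(\Z[\pi])$ rather than in the larger monoid $l_5$ that features in Kreck's general treatment, and that Wall's realisation theorem is available in this low range. Also needed, though routine, is the observation that the spheres one surgers have trivial normal bundle, which holds because their classes die in $\pi_2(B)$ and $p$ is $3$-coconnected. All of this is carried out in Kreck's monograph, which is why in the body of the paper we simply cite the statement.
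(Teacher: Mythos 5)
The paper does not prove this theorem; it cites it verbatim from Kreck's monograph, exactly as you anticipate in your closing sentence. So there is no in-paper proof to compare against, and your proposal is best read as a reconstruction of what the cited argument does. As such it is a faithful outline: surgery below the middle dimension using the $3$-connectivity of $\xi_0$ to trivialise low-dimensional kernels; passage to a handle normal form with only $2$- and $3$-handles; extraction of a formation-type invariant; and realisation of Wall-group elements to prove completeness. You also correctly flag the three genuinely delicate points, namely the nonsingularity and quadratic refinement of the formation, the relationship between Kreck's obstruction monoid $l_5$ and the group $L^s_5$, and the availability of Wall realisation in this dimension.

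Two places where the sketch is looser than the actual argument are worth noting if you ever need to unwind the citation. First, the $2$- and $3$-handle modules of $(Z,X_0)$ need not have the same rank a priori: since $Y$ is a closed $3$-manifold, $\chi(Z)=\tfrac12\bigl(\chi(X_0)+\chi(X_1)\bigr)$ and hence $\chi(Z,X_0)=\tfrac12\bigl(\chi(X_1)-\chi(X_0)\bigr)$, which can be nonzero; one must stabilise by trivial cancelling handle pairs before writing down any formation, and the bookkeeping of this stabilisation is part of what makes $\theta$ well defined only up to the relevant equivalence. Second, the statement that the $2$-handle cores and $3$-handle attaching spheres ``give the two lagrangians'' elides the step of producing the \emph{quadratic} structure: the self-intersection data needed to promote a pair of half-rank submodules to a nonsingular quadratic formation comes from the normal $B$-structure and the $3$-coconnectivity of $p$, and this is precisely the place where Kreck's general odd-dimensional theory only yields a monoid-valued obstruction. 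That the present hypotheses (normal $2$-smoothings of a common $(B,p)$ with finite $2$-skeleton, agreement over $\zeta$ on $Y$) upgrade this to a class in $L^s_5(\Z[\pi])$ is the content of the cited result, and it is not something your sketch establishes. None of this is a gap in the paper, which treats the theorem as a black box, but it is where the real work lies if one were reconstructing Kreck's proof.
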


It will be important to be able to compute the bordism groups $\Omega_4(B,p)$, to which we now turn.

The normal $2$-types we will need later on will be of the following general type. Let $P$ be a CW complex with finite 2-skeleton and let
\[B:= P\times \BTOPSpin.\]
Write the canonical principal fibration $\gamma \colon \BTOPSpin \to \BSTOP$ and let $p := \gamma \circ \pr_2 \colon B \to \BSTOP$ be projection followed by~$\gamma$.

In this case a normal $B$-structure on a 4-manifold $M$ consists of a map $M \to P$ and a spin structure on the stable normal bundle of $M$.  A spin structure on the stable normal bundle determines and is determined by a spin structure on the stable tangent bundle. As a consequence, we can identify the group $\Omega_4(B,p)$ with the usual spin bordism group $\Omega_4^{\TOPSpin}(P)$.
The latter group can be computed using an Atiyah-Hirzebruch spectral sequence:
\[
E_2^{r,s}\cong H_r(P;\Omega_s^{\TOPSpin}) \implies \Omega_{r+s}^{\TOPSpin}(P),
\]
where we recall that the coefficients in the range of interest are $\Omega_s^{\TOPSpin}\cong \Z, \Z/2, \Z/2, 0, \Z$ for $s=0,1,2,3,4$ respectively.

The next proposition, due to Teichner, describes certain differentials in this spectral sequence in terms of Steenrod squares.

\begin{proposition}[{\cite[Lemma]{MR1214960}}]\label{cor:themeat}~In the Atiyah-Hirzebruch spectral sequence described above, we have the following differentials.
\begin{enumerate}[leftmargin=*]\setlength\itemsep{0em}
\item\label{item:themeat1} The differential $d^{r,1}_2\colon H_r(P;\Z/2)\to H_{r-2}(P;\Z/2)$ is identified via universal coefficients with $\Hom(\Sq^2,\Z/2)$, where $\Sq^2$ is the second Steenrod square. More precisely, the following square commutes:
\[\begin{tikzcd}
    H_r(P;\Z/2) \ar[rr,"d_2^{r,1}"] \ar[d,"\cong"]
    && H_{r-2}(P;\Z/2) \ar[d,"\cong"] \\
  \Hom(H^r(P;\Z/2),\Z/2) \ar[rr,"(\Sq^2)^*"]
   && \Hom(H^{r-2}(P;\Z/2),\Z/2).
  \end{tikzcd}\]
\item\label{item:themeat2}
The differential $d_2^{r,0}\colon H_r(P;\Z)\to H_{r-2}(P;\Z/2)$ is reduction mod 2 followed by $\Hom(\Sq^2,\Z/2)$. More precisely, the following diagram commutes:
  \[\begin{tikzcd}
    H_r(P;\Z) \ar[rrrr,"d_2^{r,0}"] \ar[d,"\operatorname{red}_2"] &&&& H_{r-2}(P;\Z/2) \ar[d,"\cong"] \\
   H_r(P;\Z/2)\ar[rr,"\cong"] &&\Hom(H^r(P;\Z/2),\Z/2) \ar[rr, "(\Sq^2)^*"] && \Hom(H^{r-2}(P;\Z/2),\Z/2).
  \end{tikzcd}\]
\end{enumerate}
\end{proposition}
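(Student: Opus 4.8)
The plan is to pass to the level of spectra, reduce the two differentials to duals of low-degree Postnikov $k$-invariants of the Thom spectrum $E := M\TOPSpin$, and then identify those $k$-invariants using the low-dimensional homotopy of the sphere. As recalled before the statement, the spectral sequence in question is the homological Atiyah--Hirzebruch spectral sequence of the connective spectrum $E$, with $E_2^{r,s} = H_r(P;\pi_s E)$ abutting to $\pi_{r+s}(E \wedge P_+) = \Omega^{\TOPSpin}_{r+s}(P)$, and $\pi_0 E = \Z$, $\pi_1 E = \pi_2 E = \Z/2$, $\pi_3 E = 0$. The first step is to invoke the standard identification of the initial differentials of such a spectral sequence with stable primary cohomology operations: writing $E\langle k\rangle$ for the $k$-connective cover of $E$, the differential off the $k$-line is dual, via universal coefficients, to the first Postnikov $k$-invariant $\kappa_k \colon H\pi_k E \to \Sigma^2 H\pi_{k+1}E$ of $E\langle k\rangle$, i.e.\ to the desuspended classifying map of the fibration $\tau_{\le k+1}(E\langle k\rangle) \to \tau_{\le k}(E\langle k\rangle) = \Sigma^k H\pi_k E$. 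In our case this makes $d_2^{r,0}$ dual to $\kappa_0 \colon H\Z \to \Sigma^2 H\Z/2$ and $d_2^{r,1}$ dual to $\kappa_1 \colon H\Z/2 \to \Sigma^2 H\Z/2$.

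Next I would compute $\kappa_0$ and $\kappa_1$. The unit map $\mathbb S \to M\TOPSpin$ is an isomorphism on $\pi_i$ for $i \le 2$ (both sides are $\Z$, $\Z/2$, $\Z/2$), so $M\TOPSpin$ and the sphere spectrum have the same $2$-type, and so do their $1$-connective covers; hence $\kappa_0$ and $\kappa_1$ agree with the corresponding $k$-invariants of the sphere. Now the group of stable operations $H^*(-;\Z/2) \to H^{*+2}(-;\Z/2)$ is $\mathcal A^2 \cong \Z/2$, generated by $\Sq^2$, and the group of stable operations $H^*(-;\Z) \to H^{*+2}(-;\Z/2)$ is $(\mathcal A / \mathcal A\Sq^1)^2 \cong \Z/2$, generated by $\Sq^2 \circ \operatorname{red}_2$ --- both immediate from the structure of $\mathcal A$ and of $H^*(H\Z;\Z/2) = \mathcal A/\mathcal A\Sq^1$ in degrees $\le 2$. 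So $\kappa_0 \in \{0,\ \Sq^2 \circ \operatorname{red}_2\}$ and $\kappa_1 \in \{0,\ \Sq^2\}$; since the differentials are natural in $P$, it suffices to show for each that the corresponding $d_2$ is nonzero on one well-chosen $P$.

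For both I would take $P = \CP^2$. To see $\kappa_1 \ne 0$: the cofibre sequence $S^3 \xrightarrow{\eta} S^2 \to \CP^2 \to S^4$ yields a long exact sequence of reduced $\TOPSpin$-bordism groups whose boundary $\wt\Omega^{\TOPSpin}_5(S^4) = \Omega^{\TOPSpin}_1 \to \wt\Omega^{\TOPSpin}_4(S^2) = \Omega^{\TOPSpin}_2$ is multiplication by $\eta$; as $\Omega^{\TOPSpin}_1 \cong \Z/2$ and $\Omega^{\TOPSpin}_2 \cong \Z/2$ are generated, respectively, by the images of $\eta \in \pi^s_1$ and $\eta^2 \in \pi^s_2$ under the unit, and $\eta^2 \ne 0$ in $\pi^s_2$, this boundary is an isomorphism, so together with $\Omega^{\TOPSpin}_3 = 0$ we get $\wt\Omega^{\TOPSpin}_5(\CP^2) = 0$. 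The only possibly nonzero $E_\infty$-term in total degree $5$ here is $E_\infty^{4,1} = \ker(d_2^{4,1})$, so $\ker(d_2^{4,1}) = 0$, whence $d_2^{4,1}$ is injective and in particular nonzero; thus $\kappa_1 = \Sq^2$. To see $\kappa_0 \ne 0$: were $d_2^{4,0}$ to vanish on the generator of $H_4(\CP^2;\Z)$, that generator would survive to $E_\infty$ --- the potential higher differentials out of it have vanishing targets, since $H_1(\CP^2;\Z/2) = 0$ and $\Omega^{\TOPSpin}_3 = 0$ --- and so would equal $f_*[M]$ for some map $f$ from a closed spin $4$-manifold $M$; but then $f^*$ of a generator of $H^2(\CP^2;\Z)$ would be a class in $H^2(M;\Z)$ of odd square, contradicting the evenness of the intersection form of a closed spin $4$-manifold. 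Hence $d_2^{4,0} \ne 0$ and $\kappa_0 = \Sq^2 \circ \operatorname{red}_2$. It remains only to feed these identifications through the universal-coefficient isomorphisms $H_r(P;\Z/2) \cong \Hom(H^r(P;\Z/2),\Z/2)$ and $H_r(P;\Z) \xrightarrow{\operatorname{red}_2} \Hom(H^r(P;\Z/2),\Z/2)$, which produces exactly the two commuting squares in the statement.

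I expect the main obstacle to be the first step: stating precisely, with the Postnikov bookkeeping correct, that the $d_2$ off the $k$-line of the Atiyah--Hirzebruch spectral sequence is governed by the first $k$-invariant of the $k$-connective cover --- in particular that for $k = 1$ the relevant operation is one between mod-$2$ cohomology groups and sees no integral cohomology. With that principle in hand, the rank-one Steenrod-algebra computations and the small bordism calculations over $\CP^2$ are routine.
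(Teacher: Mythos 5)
The paper does not prove Proposition~\ref{cor:themeat}; it cites it directly from Teichner \cite{MR1214960}, so there is no in-text proof to compare against. Your argument is, however, correct and essentially self-contained, and it follows what I believe is the spirit of Teichner's original lemma (reduce to the first Postnikov $k$-invariant of the relevant connective cover, use the fact that $M\TOPSpin$ and $\mathbb{S}$ share a $2$-type, and pin down the sign by a low-dimensional computation).

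A few remarks on the details. The identification of the $d_2$ off the $q$-line with the smash-induced map coming from the first $k$-invariant of $E\langle q\rangle$ is the right framework, and over $\Z/2$ the smash-induced map is exactly the universal-coefficients dual of the cohomology operation, so the passage between ``$k$-invariant'' and ``$\Hom(\Sq^2,\Z/2)$'' is legitimate. The Steenrod-algebra counts are right: $\mathcal A^2 = \langle\Sq^2\rangle \cong \Z/2$ (since $\Sq^1\Sq^1=0$), and $H^2(H\Z;\Z/2) = (\mathcal A/\mathcal A\Sq^1)^2 = \langle\Sq^2\rangle \cong \Z/2$, represented on integral classes by $\Sq^2\circ\operatorname{red}_2$. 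The two $\CP^2$ checks are the crux, and both go through. For $\kappa_1$: the boundary map in the reduced bordism long exact sequence of $S^2\to\CP^2\to S^4$ in degree $5$ is multiplication by $\eta$, which is an isomorphism $\Omega^{\TOPSpin}_1\to\Omega^{\TOPSpin}_2$ because the unit $\mathbb S\to M\TOPSpin$ is an isomorphism on $\pi_1,\pi_2$ and $\eta^2\ne0$; together with $\Omega^{\TOPSpin}_3=0$ this forces $\widetilde\Omega^{\TOPSpin}_5(\CP^2)=0$, and since $E_\infty^{4,1}=\ker d_2^{4,1}$ is the only possible contribution in total degree $5$, the differential is nonzero. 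For $\kappa_0$: if $d_2^{4,0}$ vanished, the fundamental class of $\CP^2$ would lift to a bordism class represented by a map from a closed $\TOPSpin$ $4$-manifold hitting $[\CP^2]$, contradicting the evenness of the intersection form of a spin $4$-manifold (which holds topologically by the Wu formula, since $v_2=w_2+w_1^2=0$). Naturality then pins both $k$-invariants to the nonzero generator, giving the commuting squares in the statement.

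One minor inefficiency: you invoke the comparison with the $2$-type of the sphere spectrum but then never use known $k$-invariants of $\mathbb S$, instead re-deriving both $k$-invariants from the $\CP^2$ computation. That remark can be dropped without affecting the proof; alternatively, it could replace the $\CP^2$ checks if one is willing to cite the $k$-invariants of $\mathbb S$.
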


\section{Properties of simple spine exteriors and the normal 2-types}\label{section:properties-2-sphere-exterior}

Let $X$ be a compact, oriented $4$-manifold such that $X\simeq S^2$. Fix once and for all an orientation of $S^2$.
Suppose $S\subseteq X$ is the image of a locally flat embedding of $S^2$ that represents a generator of $\pi_2(X)$. Write $W:=X\sm \nu S$. Write $n\in\Z$ for the self-intersection of $S$ in $X$. The closed tubular neighbourhood $\ol{\nu} S\subseteq X$ homeomorphic to $D^2\wt{\times}_n S^2$, the $D^2$-bundle over $S^2$ with Euler number $n$. Hence $\partial W=\partial X\sqcup -L$ where $L:=\partial\ol{\nu} S$ is homeomorphic to the lens space $L(n,1)$ (our convention is that $L(0,1)=S^1\times S^2$). Assume in addition that $\pi_1(W)$ is abelian, i.e.\ that $S$ is simple.

The purpose of this section is to compute a normal $2$-type of $W$.

\begin{lemma}\label{lem:easy}
The manifold $X$ has connected, nonempty boundary.
\end{lemma}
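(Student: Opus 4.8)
The plan is to treat the two claims separately. Nonemptiness is immediate: if $\partial X=\emptyset$ then $X$ is a closed oriented $4$-manifold, so $H_4(X;\Z)\cong\Z$, contradicting $H_4(X;\Z)\cong H_4(S^2;\Z)=0$ which comes from $X\simeq S^2$. Hence $\partial X\neq\emptyset$, and in particular $\partial W=\partial X\sqcup -L$ is nonempty with $L\cong L(n,1)$ connected.

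For connectedness I would work with the exterior $W=X\sm\nu S$. Since $\ol{\nu}S$ deformation retracts onto $S$, the spine hypothesis says that the inclusion $\ol{\nu}S\hookrightarrow X$ is a homotopy equivalence, so $H_*(X,\ol{\nu}S)=0$; using the collar along $L$ this gives, by excision, $H_*(W,L)=0$, i.e.\ the inclusion induces isomorphisms $H_*(L)\xrightarrow{\cong}H_*(W)$. In particular $W$ is connected and $H_3(W)\cong H_3(L)\cong\Z$, while $H_2(W)\cong H_2(L)$ is $0$ or $\Z$ according as $n\neq0$ or $n=0$. Consequently, by Poincar\'{e}--Lefschetz duality and the universal coefficient theorem,
\[
H_1(W,\partial W)\cong H^3(W)\cong\Hom(H_3(W),\Z)\cong\Z,
\]
since the contribution $\Ext(H_2(W),\Z)$ vanishes in both cases.

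I would then read off the conclusion from the end of the long exact sequence of the pair $(W,\partial W)$:
\[
H_1(W,\partial W)\xrightarrow{\ \partial\ }H_0(\partial W)\xrightarrow{\ \iota_*\ }H_0(W)\to H_0(W,\partial W)=0,
\]
where $H_0(W,\partial W)=0$ because $W$ is connected with nonempty boundary. Thus $\iota_*$ is the surjective sum map $\Z^m\to\Z$, where $m$ is the number of components of $\partial W$, and its kernel $\Z^{m-1}$ is the image of $\partial$. So $\Z^{m-1}$ is a quotient of $H_1(W,\partial W)\cong\Z$, which forces $m\le2$. Since $L$ accounts for one component of $\partial W$ and $\partial X\neq\emptyset$ contributes at least one more, we get $m=2$, so $\partial X$ is connected.

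No step here is a genuine obstacle; the only point requiring a little care is the excision identification $H_*(X,\ol{\nu}S)\cong H_*(W,L)$, which is routine given the tubular-neighbourhood collar, and everything else is bookkeeping with long exact sequences and duality. It is worth noting that this argument does not use the simplicity of $S$.
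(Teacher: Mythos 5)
Your proof is correct, but it takes a more roundabout route than necessary, and it invokes structure the paper's proof avoids. The paper proves connectedness directly from $X \simeq S^2$: by Poincar\'e--Lefschetz duality $0 = H_3(X) \cong H^1(X,\partial X)$, and from the cohomology long exact sequence of $(X,\partial X)$ (using $H^1(X)=0$) one computes $H^1(X,\partial X) \cong \Z^{r-1}$ where $r$ is the number of boundary components, so $r=1$. That argument uses no properties of the spine $S$ at all, only the homotopy type of $X$.

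You instead pass to the spine exterior $W$, use the spine hypothesis (inclusion $\ol{\nu}S \hookrightarrow X$ a homotopy equivalence) to get $H_*(L)\cong H_*(W)$, deduce $H_1(W,\partial W)\cong\Z$ via duality and universal coefficients, and then read off the count of boundary components from the tail of the long exact sequence of $(W,\partial W)$. Every step is valid, and your remark that the argument does not use simplicity of $S$ is accurate. But note that it does use the existence and spine property of $S$, whereas the lemma as stated is purely about $X$; the paper's proof makes that manifest. Your computation of $H_*(W)\cong H_*(L)$ is not wasted effort --- it essentially re-derives the content of Lemma~\ref{lem:lens} --- but for the immediate purpose the direct route through $(X,\partial X)$ is shorter and structurally cleaner.
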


\begin{proof}
Since $X$ is simply connected it is orientable, and so $H_4(X)=0$ implies the boundary must be nonempty. Next, $0=H_3(X)\cong H^1(X,\partial X)\cong \Z^{r-1}$, where $r$ is the number of boundary components. So $r=1$.
\end{proof}

\begin{lemma}\label{lem:lens}
The manifold $W$ is a $\Z$-homology bordism from $\partial X$ to $L$ and $\partial X\hookrightarrow W$ induces a surjection on $\pi_1$.
\end{lemma}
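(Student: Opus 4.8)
The plan is to compute the relevant homology and $\pi_1$ information directly from the decomposition $X = W \cup_L \ol\nu S$ and the known homotopy type $X \simeq S^2$. First I would record that $\ol\nu S \simeq S^2$ and $L = \partial \ol\nu S \cong L(n,1)$, so $H_*(\ol\nu S;\Z)$ and $H_*(L;\Z)$ are standard: $H_*(L(n,1);\Z) = \Z, \Z/n, 0, \Z$ in degrees $0,1,2,3$ for $n \neq 0$ (with the obvious modification $\Z,\Z,0,\Z$ when $n=0$, where $L = S^1\times S^2$). The inclusion $S \hookrightarrow X$ is a homotopy equivalence by hypothesis, hence so is $\ol\nu S \hookrightarrow X$; in particular the map on $H_2$ is an isomorphism $\Z \to \Z$, and $\ol\nu S$ carries all of $H_2(X)$.

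Next I would run the Mayer--Vietoris sequence for $X = W \cup_L \ol\nu S$ (after thickening $L$ to a collar so the pieces are open), together with the long exact sequence of the pair $(X, W)$ and excision $H_*(X,W) \cong H_*(\ol\nu S, L)$. Since $H_*(\ol\nu S, L) \cong \wt H_*(\Th(\nu S)) $ is the (reduced) homology of the Thom space of an oriented $D^2$-bundle over $S^2$, it is concentrated in degrees $2$ and $4$ (both $\cong \Z$, with the degree-$4$ class the Thom class capped suitably and the degree-$2$ class detecting the Euler number $n$). Feeding this into the long exact sequence of $(X,W)$ and using $H_*(X) = \Z, 0, \Z, 0, 0$ for $* = 0,\dots,4$, one reads off that the map $H_k(W) \to H_k(X)$ together with $H_k(X) \to H_k(X,W)$ forces $\wt H_*(W;\Z) = 0$ in all positive degrees that matter — i.e. $W$ has the $\Z$-homology of a point — except one must be careful in degrees $1$ and $2$ where the boundary maps $H_4(X,W) \to H_3(W)$ and $H_2(X,W) \to H_1(W)$ interact with the self-intersection number. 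The point is that the class in $H_2(X,W) \cong \Z$ hits the generator of $H_2(X)$ (since $\ol\nu S$ carries $H_2(X)$), so $\partial\colon H_3(X,W) \to H_2(W)$ and the adjacent maps pin down $H_*(W)$. A cleaner route: compare $W$ to $\partial X$ via the pair $(W,\partial X)$, use $H_*(W,\partial X) \cong H^{4-*}(W, L)$ by Poincaré--Lefschetz duality, and bootstrap from the computation just described; this exhibits $W$ as a $\Z$-homology cobordism from $\partial X$ to $L$ once one checks $H_*(W,\partial X) \cong H_*(L)$ compatibly, i.e. that the inclusions $\partial X \hookrightarrow W \hookleftarrow L$ are $\Z$-homology equivalences.

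For the $\pi_1$ statement I would argue as follows. By hypothesis $\pi_1(W)$ is abelian. Van Kampen applied to $X = W \cup_L \ol\nu S$, using $\pi_1(\ol\nu S) = 1$ and $\pi_1(L) = \Z/n$ (respectively $\Z$ when $n=0$), gives $1 = \pi_1(X) = \pi_1(W)/N$ where $N$ is the normal subgroup generated by the image of $\pi_1(L)$; since $\pi_1(W)$ is abelian this says $\pi_1(W)$ is the quotient-free part, i.e. $\pi_1(W)$ is generated by the image of $\pi_1(L) \to \pi_1(W)$ — in fact $\pi_1(W)$ is a cyclic group which is a quotient of $\Z/n$ (or of $\Z$). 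On the other hand $H_1(W;\Z) \cong \pi_1(W)$ (abelian), and the homology computation above identifies $H_1(W)$; combined with the half-lives-half-dies / duality input one sees $\partial X \hookrightarrow W$ is surjective on $H_1$, and since $\pi_1(W)$ is abelian, surjectivity on $H_1$ upgrades to surjectivity on $\pi_1$.

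The main obstacle I anticipate is bookkeeping in the Mayer--Vietoris/long-exact-sequence computation of $H_*(W)$ near degrees $1$ and $2$: one has to correctly track how the self-intersection number $n$ enters (it governs $H_1(L) = \Z/n$ and the attaching data of the Thom space), and confirm that despite this $n$-torsion appearing in $L$, the cobordism $W$ still has trivial reduced $\Z$-homology and that both boundary inclusions are $\Z$-homology equivalences. The cleanest way to dispatch this is probably to note that $\ol\nu S \hookrightarrow X$ being a homotopy equivalence makes the pair $(X, \ol\nu S)$ $\Z$-acyclic, hence by excision $(W, L)$ is $\Z$-acyclic, i.e. $L \hookrightarrow W$ is a $\Z$-homology equivalence; then Poincaré--Lefschetz duality $H_*(W, \partial X) \cong H^{4-*}(W, L) = 0$ immediately gives that $\partial X \hookrightarrow W$ is also a $\Z$-homology equivalence, proving $W$ is a $\Z$-homology cobordism from $\partial X$ to $L$ in one stroke. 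The $\pi_1$-surjectivity then follows from the abelian hypothesis plus $H_1$-surjectivity as above.
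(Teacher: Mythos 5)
Your ``cleaner route'' at the end is correct and is essentially the paper's own argument: since $\ol{\nu}S \hookrightarrow X$ is a homotopy equivalence, $H_*(X,\ol{\nu}S;\Z)=0$, so by excision $H_*(W,L;\Z)=0$, and Poincar\'e--Lefschetz duality then gives $H_*(W,\partial X;\Z)\cong H^{4-*}(W,L;\Z)=0$; the $\pi_1$ statement follows because $\pi_1(W)$ abelian lets the Hurewicz-factored composition $\pi_1(\partial X)\to H_1(\partial X)\to H_1(W)\cong \pi_1(W)$ be a composite of surjections. (The paper reaches the vanishing of $H_*(W,L)$ via a chain-level push-out/pull-back observation rather than excision, but these amount to the same thing.)

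One thing to flag: the intermediate claim that ``$W$ has the $\Z$-homology of a point'' is incorrect, and the hedging that follows (``except one must be careful in degrees $1$ and $2$'') doesn't actually fix it. From the long exact sequence of $(X,W)$ together with $H_*(X,W)\cong \wt{H}_*(\Th(\nu S))\cong \Z$ concentrated in degrees $2$ and $4$, one finds $H_3(W)\cong\Z$, $H_2(W)=0$ (for $n\neq 0$), $H_1(W)\cong \Z/n$, i.e.\ $W$ has the integral homology of $L=L(n,1)$, not of a point. (For $n=0$ one gets the homology of $S^1\times S^2$.) This is precisely what being a $\Z$-homology bordism from $\partial X$ to $L$ says, and your cleaner route establishes it directly; the earlier paragraph would need that correction to stand on its own. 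Also, you cite ``half-lives-half-dies'' for the $H_1$-surjectivity of $\partial X\hookrightarrow W$, but your own cleaner argument already gives the stronger statement that this map is an isomorphism, so the extra input isn't needed.
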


\begin{proof}
Consider the Mayer-Vietoris push-out square of singular chain complexes with $\Z$ coefficients
\[
\begin{tikzcd}
C_*(L)\ar[r]\ar[d]&C_*(W)\ar[d]\\
C_*(\overline{\nu}S)\ar[r]&C_*(X).
\end{tikzcd}
\]
In the homotopy category of chain complexes every push-out square is also a pull-back square. Since the square is a push-out and a pull-back, the fibre (resp.\ cofibre) of the top horizontal map is chain equivalent to the fibre (resp.\ cofibre) of the bottom horizontal map.
Since $S$ is a spine, the inclusion $\overline{\nu}S\hookrightarrow X$ is a homotopy equivalence, and so the lower horizontal arrow in the diagram is a chain equivalence. Therefore the fibre and cofibre of the bottom horizontal map are chain contractible. It follows that the same holds for the top map, so it is also a chain equivalence. Thus $L\to W$ is an integral homology equivalence. In particular, we have $H_*(W,L;\Z)=0$. By Poincar\'{e}-Lefschetz duality the cohomology is $H^*(W,\partial X;\Z)=0$. By the universal coefficient theorem the homology is $H_*(W,\partial X;\Z)=0$. Consequently $\partial X\to W$ is an integral homology equivalence, and so $W$ is indeed a $\Z$-homology bordism.

To see that $\partial X\hookrightarrow W$ induces a surjection on $\pi_1$, consider that as $\pi_1(W)$ is abelian, the map $\pi_1(\partial X)\to \pi_1(W)\cong H_1(W)$ factors through the surjection $\pi_1(\partial X)\to H_1(\partial X)$, so $\pi_1(\partial X) \to H_1(\partial X) \to H_1(W)$ is a composition of two surjections.
\end{proof}

\begin{lemma}\label{lem:lenshtpy}
The inclusion $L \hookrightarrow W$ induces    a homotopy equivalence  .
\end{lemma}

\begin{proof}
  The proof is via Whitehead's Theorem, so we must show the inclusion induced maps are isomorphisms on all homotopy groups. Since $\pi_1(W)$ is abelian, the Hurewicz theorem and Lemma~\ref{lem:lens} immediately imply the inclusion induces $\pi_1(L)\cong \pi_1(W)$. Now write $p\colon\widetilde{W}\to W$ and $p\colon\widetilde{L}\to L$ for the universal covers. Given the isomorphism on $\pi_1$, and the relative Hurewicz Theorem, it is now sufficient to show that inclusion induces isomorphisms $H_i(\widetilde{L})\cong H_i(\widetilde{W})$ for all $i\geq 2$~\cite[Theorem~IV.7.2]{MR516508}. As $\widetilde{L}$ is either $S^3$ (in the case $n\neq 0$) or $S^1\times S^2$ (in the case $n=0$), we have $H_i(\widetilde{L})=0$ for $i\geq 4$. Similarly, as $\widetilde{W}$ is a $4$-manifold with nonempty boundary, we have $H_i(\widetilde{W})=0$ for $i\geq 4$. Thus we focus our attention now on proving homology isomorphism for $i=2, 3$.

 Since, for each component of $\partial W$, the inclusion into $W$ induces a surjection on $\pi_1$, this implies that $\partial \wt{W}=p^{-1}(\partial W)$ has exactly two connected components. There is thus  an exact sequence
\[
\dots \to \underbrace{H_1(\wt{W};\Z)}_{=\,0} \to H_1(\wt{W},\partial \wt{W};\Z) \to \underbrace{H_0(\partial \wt{W};\Z)}_{\cong\,\Z^2}\twoheadrightarrow \underbrace{H_0(\wt{W};\Z)}_{\cong\,\Z}\to H_0(\wt{W},\partial \wt{W};\Z)\to0.
\]
The surjection $H_0(\partial \wt{W};\Z)\twoheadrightarrow H_0(\wt{W};\Z)$ splits, so we deduce that $H_1(\wt{W},\partial \wt{W};\Z)\cong \Z$ and $H_0(\wt{W},\partial \wt{W};\Z)=0$.

\underline{The case $n\neq 0$:} In this case $H_2(\widetilde{L})=0$ and $H_3(\widetilde{L})\cong\Z$ so we will be done if we show $H_2(\widetilde{W})=0$ and the inclusion induced map $H_3(\widetilde{L})\to H_3(\widetilde{W})$ is an isomorphism. As $n\neq 0$, we have that $\widetilde{W}$ is compact so, by Poincar\'{e}-Lefschetz duality and the universal coefficient theorem, we have
\[
H_3(\wt{W};\Z)\cong H^1(\wt{W},\partial \wt{W};\Z)
\cong H_1(\wt{W},\partial \wt{W};\Z)\cong \Z.
\]
Using Lemma~\ref{lem:lens} we compute that $\chi(W)=0$, and as Euler characteristic is multiplicative under finite covers, we have as well that $\chi(\wt{W})=0$. Setting $b_i=\dim_\Q(H_i(\wt{W};\Q))$ we calculate that
\[
0=b_0-b_1+b_2-b_3
=1-0+b_2-1
=b_2
\]
so that $H_2(\wt{W};\Z)$ is $\Z$-torsion. By the Universal Coefficient Theorem, the $\Z$-torsion of $H_2(\wt{W};\Z)$ would appear in $H^3(\wt{W};\Z)\cong H_1(\widetilde{W},\partial \widetilde{W};\Z)\cong\Z$. Thus $H_2(\wt{W};\Z)=0$ as required.

Now the long exact sequence of the pair gives an exact sequence
\[
H_4(\wt{W};\Z)\to H_4(\wt{W},\partial \wt{W};\Z)\to H_3(\wt{\partial X};\Z)\oplus H_3(\wt{L};\Z)\to H_3(\wt{W};\Z)\to H_3(\wt{W},\partial \wt{W};\Z)
\]
which is isomorphic to the sequence
\[
0\to\Z\to\Z\oplus \Z\to\Z\to 0,
\]
in which the second map is the diagonal $1\mapsto (1,1)$. We may conclude from this that the map $H_3(\wt{L};\Z)\to H_3(\wt{W};\Z)$ is an isomorphism as required (this is also true for $H_3(\wt{\partial X};\Z)\to H_3(\wt{W};\Z)$, but we do not need this).

\underline{The case $n=0$:} In this case, the Universal Coefficient Spectral Sequence for cohomology has $E_2$ page
\[
E_2^{r,s}=\Ext^s_{\Z[\Z]}(H_r(\wt{W},\partial\wt{W};\Z),\Z[\Z])
\]
and converges to $E_\infty^{r,s}=H^{r+s}(W,\partial W;\Z[\Z])$. The $r=0$ column vanishes on the $E_2$ page as $H_0(\wt{W},\partial\wt{W};\Z)=0$. For the $r=1$ column, recall that $H_1(\wt{W},\partial\wt{W};\Z)\cong\Z$. The standard cellular chain complex for the universal cover of $S^1\simeq B\Z$
gives a free $\Z[\Z]$-module resolution of~$\Z$, and hence we compute that $\Ext^s_{\Z[\Z]}(\Z;\Z[\Z]) \cong H^s(S^1;\Z[\Z]) \cong H_{1-s}(S^1;\Z[\Z])$. So $E_2^{1,s}\cong\Z$ for $s=1$, and vanishes for $s\neq 1$. Thus the only nonvanishing terms on the $r+s=2$ line are at $E^{1,1}_2$ and $E_2^{2,0}$, and as the bidegree of the $d_2$ differential is $(-1,2)$, the spectral sequence collapses here to yield a short exact sequence
$0 \to \Z = E^{1,1}_2 \to E^2_\infty= H^2(W,\partial W;\Z[\Z]) \to E^{2,0}_2 \to 0$.  Here $E^{2,0}_2 = \Hom_{\Z[\Z]}(H_2(W,\partial W;\Z[\Z]),\Z[\Z])$ is a free module by~\cite[Lemma 2.1]{BorodzikFriedlOnTheAlgebraic}. Therefore the short exact sequence splits and we have $H^2(W,\partial W;\Z[\Z]) \cong E_2^{2,0} \oplus \Z$.

We now claim that $H^2(W,\partial W;\Z[\Z])$ is $\Z[\Z]$-torsion. To see this, first consider $H_i(W;\Q(t))\cong H_i(W;\Z[\Z])\otimes_{\Z[\Z]}\Q(t)$, as localisation is flat. We have already seen that $E^{1,0}_2=E^{0,1}_2=0$, so we compute that $H^1(W,\partial W;\Z[\Z])=0$. Thus $H_3(W;\Z[\Z])=0$ by Poincar\'{e}-Lefschetz duality.
We also have $H_1(W;\Q(t)) \cong H_1(\wt{W};\Z) \otimes_{\Z[\Z]} \Q(t) = 0$ since $\pi_1(W)=\Z$, and $H_0(W;\Q(t)) \cong H_0(\wt{W};\Z) \otimes_{\Z[\Z]} \Q(t) \cong \Z \otimes_{\Z[\Z]} \Q(t) =0$.
This means we know $H_i(W;\Q(t))=0$ for $i\neq 2$. But as Euler characteristic can be computed with any field coefficients, this implies $\chi(W)=\dim_{\Q(t)}(H_2(W;\Q(t)))$. By Lemma~\ref{lem:lens} and the fact that $L$ is a closed, orientable 3-manifold, this number is 0, which proves the claim.

Combining the fact that $H^2(W,\partial W;\Z[\Z])\cong E^{1,1}_2\oplus E_2^{2,0}$ is $\Z[\Z]$-torsion with the fact that $E_2^{2,0}=\Ext^0_{\Z[\Z]}(H_2(\wt{W},\partial\wt{W};\Z),\Z[\Z])\cong\Hom_{\Z[\Z]}(H_2(\wt{W},\partial\wt{W};\Z),\Z[\Z])$ is free $\Z[\Z]$-module, we conclude that $E_2^{2,0}=0$. Thus $H^2(W,\partial W;\Z[\Z])\cong E^{1,1}_2\cong\Z$. From this we see
\[
\pi_2(W)\cong\pi_2(\wt{W})\cong H_2(\wt{W};\Z)\cong H^2(W,\partial W;\Z[\Z])\cong\Z
\]
as claimed.  Note that $\pi_2(L) \cong \pi_2(S^1 \times S^2) \cong \Z$ as well.

Write $i\colon L \to W$ for the inclusion. Consider the diagram
\[
\begin{tikzcd}
\pi_2(L)\ar[r, "\pi_2(i)"]\ar[d, "\text{Hur}"]&\pi_2(W)\ar[d, "\text{Hur}"]\\
H_2(L;\Z)\ar[r,"H_2(i)", "\cong"']&H_2(W;\Z)
\end{tikzcd}
\]
where the downwards maps are the Hurewicz maps. By the Hurewicz Theorem, these downwards maps are both surjective, and are thus both surjections from the infinite cyclic group to itself. Hence they are both isomorphisms. By the naturality of the Hurewicz map, the square commutes and so $\pi_2(i)$ is an isomorphism as claimed.

In other words, $H_2(\widetilde{L};\Z)\to H_2(\widetilde{W};\Z)$ is an isomorphism as required.

Along the way we have shown $H_3(\widetilde{W};\Z)=H_3(W;\Z[\Z])=0$, and as $L\cong S^1\times S^2$ we have~$H_3(\widetilde{L};\Z)=0$. Hence the proof is complete.
\end{proof}

We now have enough information to calculate a normal $2$-type of~$W$.

\begin{definition}\label{defn:normal-2-type-of-the-complement}
For $n\in\Z$, define
\[
B(n):=\left\{\begin{array}{rl}B(\Z/n)\times \BTOPSpin & \text{if $n\neq 0$,}\\ (S^1\times\C P^\infty)\times\BTOPSpin&\text{if $n=0$,}\end{array}\right.
\]
and define a fibration
\[
p_n\colon B(n)\xrightarrow{\gamma\circ\operatorname{pr}_2} \BSTOP,
\]
where $\operatorname{pr}_2$ is projection to the second factor and $\gamma\colon \BTOPSpin\to \BSTOP$ is the canonical map.
\end{definition}
\begin{proposition}\label{prop:normal2type_nonzero}
Let $n\neq 0$ be an integer. If $  M  $ is a compact, oriented, spin $4$-manifold with $\pi_1(  M  )\cong\Z/n$ and $\pi_2(  M  )=0$ then $  M  $ has normal $2$-type $(B(n),p_n)$.
\end{proposition}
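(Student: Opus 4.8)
The plan is to verify directly that $(B(n),p_n)$ satisfies the three defining properties of a normal $2$-type for $X$: that $p_n$ is $3$-coconnected with connected fibre, that $B(n)$ has the homotopy type of a CW complex, and that there exists a $3$-connected normal $B(n)$-structure $\overline{\nu}_X\colon X\to B(n)$. The second point is immediate since $B(\Z/n)$, $S^1\times\C P^\infty$, and $\BTOPSpin$ are all CW complexes. For the first point, recall that $\gamma\colon\BTOPSpin\to\BSTOP$ is a principal fibration with fibre $K(\Z/2,1)$, hence $2$-coconnected with connected fibre; since $B(\Z/n)$ (resp.\ $S^1\times\C P^\infty$) is aspherical-through-dimension-$2$ in the sense that its homotopy groups vanish above degree $2$ for $n=0$ and above degree $1$ for $n\neq 0$, the projection $\operatorname{pr}_2$ kills exactly $\pi_1$ and $\pi_2$ and the composite $p_n=\gamma\circ\operatorname{pr}_2$ is $3$-coconnected with connected fibre. (Concretely, the homotopy fibre of $p_n$ is $B(\Z/n)\times K(\Z/2,1)$ when $n\neq0$, which is connected and has homotopy concentrated in degrees $\le 2$.)

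The substance of the argument is the construction of a $3$-connected lift $\overline{\nu}_X\colon X\to B(n)$. First I would use the hypothesis that $X$ is spin: a choice of spin structure on the stable normal bundle is exactly a lift $X\to\BTOPSpin$ of $\nu_X$ along $\gamma$, and this gives the $\BTOPSpin$-coordinate. For the other coordinate I need a map $X\to B(\Z/n)$ (if $n\neq 0$) that is $3$-connected. Since $\pi_1(X)\cong\Z/n$, the classifying map of the universal cover, $c\colon X\to B\pi_1(X)=B(\Z/n)$, is $2$-connected automatically. The point is then to upgrade "$2$-connected" to "$3$-connected," i.e.\ to check that $\pi_2(c)\colon\pi_2(X)\to\pi_2(B(\Z/n))$ is surjective and $\pi_3(c)$ is surjective; but $\pi_2(B(\Z/n))=0$ and, crucially, the hypothesis $\pi_2(X)=0$ forces $\pi_3(c)$ to be surjective as well (indeed $\pi_k(X)\to\pi_k(B(\Z/n))$ for $k=2,3$ is a map between groups the first of which vanishes and the second of which... wait, $\pi_3(B\Z/n)=0$ too, so surjectivity is automatic). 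Finally I assemble $\overline{\nu}_X := (c,\,s)\colon X\to B(\Z/n)\times\BTOPSpin = B(n)$ where $s$ is the spin lift, and check that the composite with $p_n$ is homotopic to $\nu_X$ — this holds by construction of $s$ — and that $\overline{\nu}_X$ is $3$-connected, which follows because $c$ is $3$-connected on homotopy groups in degrees $\le 3$ (using $\pi_2(X)=0$) and the $\BTOPSpin$-factor contributes nothing to $\pi_k$ for $k\le 3$ beyond what is forced.

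The main obstacle I anticipate is bookkeeping the connectivity claim precisely in degree $3$: one must be careful that "$3$-connected normal $B$-structure" really does only constrain $\pi_k$ for $k\le 3$, and that the vanishing of $\pi_2(X)$ (rather than, say, $\pi_3(X)$) is genuinely enough. The resolution is that $B(n)$ is itself $2$-coconnected-ish — $\pi_k(B(n))=0$ for $k\ge 3$ — so $3$-connectivity of $\overline{\nu}_X$ amounts to: iso on $\pi_0,\pi_1$, iso on $\pi_2$, and surjection on $\pi_3$; the first is clear, the second reads $0\xrightarrow{\cong}0$ by the hypothesis and $\pi_2(B(n))=0$, wait for $n=0$ we'd have $\pi_2(B(n))=\Z\neq 0$ but this proposition assumes $n\neq 0$ so $\pi_2(B(n))=0$ and $\pi_2(X)=0$, giving the iso, and the third reads (something)$\twoheadrightarrow 0$, automatic. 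I would also double-check that $B(\Z/n)$ can be taken with finite $2$-skeleton (needed downstream for Theorem~\ref{thm:kreckmain}), which is standard. Once these connectivity verifications are in place, the proposition follows by unwinding the definition of normal $2$-type, with essentially no further computation.
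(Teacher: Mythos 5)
Your proof is correct and follows essentially the same route as the paper: verify that $p_n$ is $3$-coconnected (using $2$-coconnectedness of $\gamma$ and vanishing of $\pi_{\geq 2}(B(\Z/n))$), then take $\overline{\nu}_X = c \times \mathfrak{s}$ with $c$ the classifying map of the universal cover and $\mathfrak{s}$ a spin structure, and check $3$-connectivity from $\pi_2(X)=0$, $\pi_2(B(n))=0$, and $\pi_3(B(n))=0$.
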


\begin{proof}
First, $\pi_k(B(\Z/n))=0$ for $k\geq 2$ and $\gamma\colon \BTOPSpin\to \BSTOP$ is $2$-coconnected . Thus $p_n$ is $2$-coconnected, and is therefore $3$-coconnected as required.

Let $c\colon   M  \to B(\Z/n)$ denote the classifying map for the universal cover of $  M  $ and let $\mathfrak{s}\colon   M  \to \BTOPSpin$ denote a choice of spin structure. We claim $c\times\mathfrak{s}$ is a normal $2$-smoothing. Certainly $p_n \circ (c\times\mathfrak{s}) = \gamma \circ \mathfrak{s} = \nu  _M  $. As $\BTOPSpin$ is $3$-connected, the maps $\pi_k(  M  )\to \pi_k(B(\Z/n)\times\BTOPSpin)$ are clearly isomorphisms for $k=1,2$. We note as well that $\pi_3(B(\Z/n)\times\BTOPSpin)=0$, and it follows that $c\times \mathfrak{s}$ is $3$-connected as required.
\end{proof}

\begin{proposition}\label{prop:normal2type_zero}
If $X$ is a compact, oriented, spin $4$-manifold with $\pi_1(  M  )\cong\Z$ and $\pi_2(  M  )\cong \Z$, then $X$ has normal $2$-type $(B(0),p_0)$.
\end{proposition}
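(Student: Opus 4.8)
The plan is to follow the proof of Proposition~\ref{prop:normal2type_nonzero} closely, the only real change being that the homotopy-theoretic part of the normal $2$-type must now carry the nontrivial $\pi_2$. Since $S^1\times\C P^\infty=K(\Z,1)\times K(\Z,2)$ has no homotopy above degree $2$, the projection $\pr_2\colon B(0)\to\BTOPSpin$ is an isomorphism on $\pi_k$ for all $k\geq 3$; and $\gamma$ is $2$-coconnected, hence also an isomorphism on $\pi_k$ for $k\geq 3$. So $p_0=\gamma\circ\pr_2$ is an isomorphism on $\pi_k$ for $k\geq 3$, and is therefore $3$-coconnected. Its fibre is the product of $S^1\times\C P^\infty$ with the fibre of $\gamma$, a product of two connected spaces, hence connected.

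Next I would produce a $3$-connected normal $B(0)$-structure $\overline{\nu}_X$. Since $X$ is spin, fix a spin structure $\mathfrak{s}\colon X\to\BTOPSpin$ with $\gamma\circ\mathfrak{s}\simeq\nu_X$. I claim there is a map $c\colon X\to S^1\times\C P^\infty$ inducing isomorphisms on $\pi_1$ and $\pi_2$; granting this, put $\overline{\nu}_X:=(c,\mathfrak{s})\colon X\to B(0)$. Then $p_0\circ\overline{\nu}_X=\gamma\circ\mathfrak{s}\simeq\nu_X$, so $\overline{\nu}_X$ is a normal $B(0)$-structure; it is an isomorphism on $\pi_1$ and $\pi_2$ because the $\BTOPSpin$-factor contributes nothing in those degrees; and it is automatically surjective on $\pi_3$ since $\pi_3(B(0))\cong\pi_3(\BTOPSpin)=0$. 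Hence $\overline{\nu}_X$ is $3$-connected and $(B(0),p_0)$ is a normal $2$-type of $X$. To construct $c$, recall that a map $X\to K(\Z,1)\times K(\Z,2)$ is the data of classes $u\in H^1(X;\Z)$ and $v\in H^2(X;\Z)$. I would take $u$ to be a generator of $H^1(X;\Z)\cong\Hom(\pi_1(X),\Z)\cong\Z$, which realises an isomorphism $\pi_1(X)\xrightarrow{\cong}\Z$. For $v$, note that the map $\pi_2(X)\to\pi_2(\C P^\infty)\cong\Z$ induced by $v$ is the composite $\langle v,-\rangle\circ h$, where $h\colon\pi_2(X)\to H_2(X;\Z)$ is the Hurewicz homomorphism, so it suffices to choose $v$ with $\langle v,h(g)\rangle=\pm1$ for $g$ a generator of $\pi_2(X)\cong\Z$.

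The hard part is the existence of such a $v$: by the universal coefficient theorem this requires $h(g)$ to be a primitive element of $H_2(X;\Z)$, which is equivalent to the statement that the second Postnikov section of $X$ is the \emph{product} $S^1\times\C P^\infty$ rather than a nontrivial $K(\Z,2)$-bundle over $S^1$. This holds once one knows that $\pi_1(X)$ acts trivially on $\pi_2(X)\cong\Z$: in that case the Wang sequence of the universal (infinite cyclic) cover $\widetilde{X}\to X$, together with $H_1(\widetilde{X};\Z)=0$, yields an isomorphism $H_2(\widetilde{X};\Z)\xrightarrow{\cong}H_2(X;\Z)$ induced by the covering projection, carrying the generator of $H_2(\widetilde{X};\Z)\cong\pi_2(\widetilde{X})$ to $h(g)$; so $h(g)$ generates $H_2(X;\Z)$, and we may take $v$ dual to it. Triviality of the action is automatic when $X$ is closed (otherwise $H_2(X;\Z)$ would be $\Z/2$ by the Wang sequence while also being free by Poincar\'{e} duality and the universal coefficient theorem), is built into the hypothesis if ``$\pi_2(X)\cong\Z$'' is understood as an isomorphism of $\Z[\Z]$-modules, and in the intended application to a simple spine exterior $W$ is provided by Lemma~\ref{lem:lenshtpy}, which supplies a $\pi_1$-equivariant isomorphism $\pi_2(W)\cong\pi_2(S^1\times S^2)\cong\Z$. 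Alternatively, I could bypass the classes $(u,v)$ and simply take $c$ to be a $3$-connected map of $X$ to its second Postnikov section, which is $S^1\times\C P^\infty$ because its only possible first $k$-invariant lies in $H^3(K(\Z,1);\pi_2(X))=H^3(S^1;\Z)=0$ and the action is trivial.
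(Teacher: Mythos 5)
Your proof is correct and, modulo the final paragraph (which offers essentially the paper's route), takes a more hands-on approach: you build $c$ explicitly from cohomology classes $u\in H^1(X;\Z)$, $v\in H^2(X;\Z)$ rather than invoking the Postnikov $2$-section directly. The paper instead observes that $H^3(S^1;\Z)=0$ forces the $k$-invariant to vanish, so the Postnikov $2$-type is $S^1\times\CP^\infty$, and then lets $c$ be the structural $3$-connected map. Both arguments hinge on the same fact, and your construction has the merit of making explicit exactly where it is used.

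Worth keeping is your observation that vanishing of the $k$-invariant alone is not quite enough: one also needs $\pi_1(X)$ to act trivially on $\pi_2(X)$, for otherwise the second Postnikov section is a \emph{twisted} $K(\Z,2)$-bundle over $S^1$ rather than a product (equivalently, your $h(g)$ fails to be primitive in $H_2(X;\Z)$). The paper's proof tacitly assumes trivial action when it writes ``the Postnikov $2$-type is a product $K(\Z,1)\times K(\Z,2)$.'' You correctly identify that this is either part of the hypothesis (reading ``$\pi_2(X)\cong\Z$'' as an isomorphism of $\Z[\Z]$-modules) or is supplied in the intended application by Lemma~\ref{lem:lenshtpy}, which gives a $\pi_1$-equivariant isomorphism $\pi_2(W)\cong\pi_2(L(0,1))=\pi_2(S^1\times S^2)$ with trivial action. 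This is a genuine, if minor, gap in the paper's phrasing that your write-up handles more carefully.
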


\begin{proof} First, $\pi_k(S^1\times\C P^\infty)=0$ for $k\geq 3$ and $\gamma\colon \BTOPSpin\to \BSTOP$ is $3$-coconnected. Thus $p_0$ is $3$-coconnected as required.

Since $H^3(S^1;\Z)=0$, the $k$-invariant of $  M  $ is trivial and therefore the Postnikov 2-type is a product $K(\Z,1)\times K(\Z,2) \simeq S^1 \times \C P^\infty$.
Let $c\colon   M  \to S^1\times\C P^\infty$ denote the $3$-connected map associated to the Postnikov 2-type of $  M  $ and let $\mathfrak{s}\colon   M  \to \BTOPSpin$ denote the choice of spin structure. We claim $c\times\mathfrak{s}$ is a normal $2$-smoothing. Certainly $p_0 \circ (c\times\mathfrak{s}) = \gamma \circ \mathfrak{s} = \nu  _M  $. As $\BTOPSpin$ is $3$-connected, the maps $\pi_k(  M  )\to \pi_k((S^1\times\C P^\infty)\times\BTOPSpin)$ are clearly isomorphisms for $k=1,2$. We note as well that $\pi_3((S^1\times\C P^\infty)\times\BTOPSpin)=0$, and it follows that $c\times\mathfrak{s}$ is $3$-connected.
\end{proof}

We have obtained the normal $2$-type of $W$.

\begin{proposition}\label{prop:2typeforexterior} Let $X$ be a compact, oriented $4$-manifold, and suppose that $X\simeq S^2$. Suppose $S\subseteq X$ is the image of a locally flat embedding of $S^2$ that represents a generator of $\pi_2(X)$. Write $W:=X\sm \nu S$, and write $n\in\Z$ for the normal Euler number of $S$ in $X$. Assume that $S$ is simple, i.e.\ that $\pi_1(W)$ is abelian. Then $(B(n),p_n)$ from Definition~\ref{defn:normal-2-type-of-the-complement} is a normal $2$-type of $W$.
\end{proposition}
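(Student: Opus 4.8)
The plan is to recognise $W$ as a manifold covered by the normal-2-type identification results already established in this section: Proposition~\ref{prop:normal2type_nonzero} when $n\neq0$, and Proposition~\ref{prop:normal2type_zero} when $n=0$. All of the substantive work has been done in the preceding lemmas, so the proof amounts to assembling those ingredients and checking the hypotheses of the relevant proposition for $W$. Concretely, I would verify that $W$ is a connected, compact, oriented, spin $4$-manifold, and then compute $\pi_1(W)$ and $\pi_2(W)$. I do not expect a genuine obstacle at this stage; the only point needing care is to split into the cases $n\neq0$ and $n=0$ and to read off the correct homotopy groups of the lens space $L=L(n,1)$ in each (in particular to use the stated convention $L(0,1)=S^1\times S^2$).

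First I would note that $W=X\sm\nu S$ is compact by construction, and connected -- for instance because $\pi_1(W)$ is abelian and receives a surjection from $\pi_1(\partial X)$ by Lemma~\ref{lem:lens}, or simply because deleting a codimension-$2$ submanifold of a connected manifold leaves the complement connected. Since $X\simeq S^2$ it is simply connected and hence orientable, and fixing an orientation of $X$ induces one on $W$. For spinness I would use Lemma~\ref{lem:Wspin} with $T=S$: because $X$ is simply connected, $[S]$ generates $H_2(X)\cong\Z$ (via Hurewicz it generates $\pi_2(X)$), so in particular $[S]$ is an odd multiple of a generator, and the lemma applies to show that $W=X\sm\nu S$ is spin.

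Next I would compute the homotopy groups using Lemma~\ref{lem:lenshtpy}, which provides isomorphisms $\pi_1(L)\xrightarrow{\cong}\pi_1(W)$ and $\pi_2(L)\xrightarrow{\cong}\pi_2(W)$, where $L=\partial\ol{\nu}S\cong L(n,1)$. If $n\neq0$, then $\pi_1(L(n,1))\cong\Z/n$ and its universal cover is $S^3$, so $\pi_1(W)\cong\Z/n$ and $\pi_2(W)\cong\pi_2(S^3)=0$; thus $W$ is a compact, oriented, spin $4$-manifold satisfying the hypotheses of Proposition~\ref{prop:normal2type_nonzero}, which gives that $(B(n),p_n)$ is a normal $2$-type of $W$. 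If $n=0$, then by our convention $L(0,1)=S^1\times S^2$, so $\pi_1(W)\cong\Z$ and $\pi_2(W)\cong\pi_2(S^1\times S^2)\cong\Z$; thus $W$ satisfies the hypotheses of Proposition~\ref{prop:normal2type_zero}, which gives that $(B(0),p_0)$ is a normal $2$-type of $W$. In either case $(B(n),p_n)$ from Definition~\ref{defn:normal-2-type-of-the-complement} is a normal $2$-type of $W$, which is the asserted conclusion. Thus the only slightly nontrivial input is the spinness of $W$, and that is precisely what Lemma~\ref{lem:Wspin} (via Lemma~\ref{lem:characteristicsurface}) was arranged to supply, so the proposition follows cleanly from the section's earlier results.
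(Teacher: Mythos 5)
Your proposal is correct and matches the paper's proof, which likewise invokes Lemma~\ref{lem:Wspin} for spinness and Lemma~\ref{lem:lenshtpy} for the homotopy groups before appealing to Proposition~\ref{prop:normal2type_nonzero} or Proposition~\ref{prop:normal2type_zero} according as $n\neq 0$ or $n=0$. You have merely spelled out the hypothesis checks (compactness, connectedness, orientability, and why $[S]$ is an odd multiple of a generator) that the paper leaves implicit.
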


\begin{proof}   By Lemma~\ref{lem:lenshtpy}, $W$ is homotopy equivalent to $L(n,1)$. As lens spaces are spin, and Steifel-Whitney classes of stable tangent bundles are homotopy invariants, we obtain that $W$ is spin.    When $n\neq 0$, Lemma~\ref{lem:lenshtpy} shows that $W$ satisfies the hypotheses of Proposition~\ref{prop:normal2type_nonzero}, and so the result follows. When $n=0$, Lemma~\ref{lem:lenshtpy} shows that $W$ satisfies the hypotheses of Proposition~\ref{prop:normal2type_zero}, and so the result follows.
\end{proof}

\section{Boundary-compatible maps to the Postnikov 2-type}\label{sec:obstruction}

As before, let $X$ be a compact, oriented 4-manifold with $X \simeq S^2$. Fix an orientation on $S^2$.
Suppose for $i=0,1$ that $S_i\subseteq X$ are simple spines, which are the images of maps representing the same generator of $\pi_2(X)$. Write $W_i:=X\sm \nu S_i$ and $L_i:=\partial\ol{\nu} S_i$. Write $n$ for the algebraic self-intersection of $S_0$ (and therefore also $S_1$) in $X$.

In this section, we begin the modified surgery programme for showing that $S_0$ and $S_1$ are ambiently isotopic. Write $P(n)$ for the Postnikov $2$-type of $W_i$; that is, the space such that~$B(n)=P(n)\times \BTOPSpin$.

We will prove that, given some map $\partial X\sqcup L(n,1)\to P(n)$, it is possible to extend it to a $3$-connected map $W_i\to P(n)$.
This is the first step to producing a full normal $2$-smoothing rel.~boundary for $W_i$, which we will need later. It follows from Proposition~\ref{prop:2typeforexterior} that the Postnikov $2$-type of $X$ is given by $P(n)=B(\Z/n)$ when $n\neq 0$ and $P(0)=B\Z\times \CP^\infty$, so when $n\neq 0$ a 3-connected map is the same as a $\pi_1$-isomorphism, and for $n=0$ it is the same as an isomorphism on $\pi_1$ and on $\pi_2$. To precisely phrase our main result of the section, we need a definition.

\begin{definition}\label{def:glueup}
Let $g_i\colon L_i\xrightarrow{\cong}L(n,1)$ be orientation preserving homeomorphisms. For $i=0,1$ we define
\[
W(g_0,g_1):=W_0\cup-W_1,\qquad \text{glued via}\qquad
\Id_{\partial X}\sqcup \,(g_1^{-1}\circ g_0)\colon \partial X\sqcup L_0\xrightarrow{\cong} \partial X\sqcup L_1.
\]
\end{definition}

Using this definition, the following lemma will achieve the aim described above.

Recall $L_i= \partial \overline{\nu}S_i$. It will be important in later sections of the paper that the parametrisations $g_i\colon \partial \overline{\nu}S_i\cong L(n,1)$ extend to    orientation preserving    homeomorphisms $G_i\colon \overline{\nu}S_i\cong D^2\wt{\times}_n S^2$. Hence we also build this into the lemma.

\begin{lemma}\label{lem:ellcompatible}
Let $n \in \Z$. For $i=0,1$, there are choices of    orientation preserving homeomorphisms     $G_i\colon \overline{\nu}S_i\cong D^2\wt{\times}_n S^2$, that preserve the 0-section, such that for the boundary parameterisations $g_i\colon L_i\cong L(n,1)$ obtained by restricting $G_i$, there exists a map $\ell\colon W(g_0,g_1)\to B(\Z/n)$ with the property that $\ell$ restricted to $W_i$ induces an isomorphism on $\pi_1$ for $i=0,1$.

In the case that $n=0$, the disc bundle structures, and hence the $g_0, g_1$, may be furthermore chosen such that there exists a map $\eta\colon W(g_0,g_1)\to \CP^\infty$ with the property that $\eta$ restricted to $W_i$ induces an isomorphism on $\pi_2$ for $i=0,1$.
\end{lemma}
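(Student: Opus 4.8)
\textbf{Proof plan for Lemma~\ref{lem:ellcompatible}.}

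The plan is to build the map $\ell$ (and, when $n=0$, the map $\eta$) by working one piece at a time: first define maps on $W_0$ and $W_1$ separately using the fact that their Postnikov $2$-type is $B(\Z/n)$ (resp.\ $B\Z\times\CP^\infty$), then arrange that the chosen boundary parameterisations $g_i$ are compatible enough that the two maps can be glued over $\partial X\sqcup L$. By Proposition~\ref{prop:2typeforexterior} each $W_i$ has normal $2$-type $(B(n),p_n)$, and by Lemma~\ref{lem:lenshtpy} the inclusion $L_i\hookrightarrow W_i$ induces isomorphisms on $\pi_1$ (and on $\pi_2$ when $n=0$). So there is a $3$-connected map $c_i\colon W_i\to P(n)$, which in the $n\neq 0$ case is just the classifying map of the universal cover, and in the $n=0$ case is the map to $K(\Z,1)\times K(\Z,2)$ detecting $\pi_1$ and a generator of $\pi_2$. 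The issue is purely that $c_0$ and $c_1$ need not agree on the common boundary piece $\partial X$, nor on $L$ after identifying $L_0$ with $L_1$.

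First I would handle the boundary $\partial X$. Both $W_0$ and $W_1$ contain $\partial X$, and on $\pi_1$ the inclusion $\partial X\hookrightarrow W_i$ is surjective (Lemma~\ref{lem:lens}), so $c_i|_{\partial X}$ represents the composite $\pi_1(\partial X)\twoheadrightarrow\pi_1(W_i)\xrightarrow{\cong}\pi_1(P(n))$. These two composites differ by an automorphism of $\pi_1(P(n))=\Z/n$ (resp.\ $\Z$ on the $\pi_1$ factor, and on $\pi_2$ by an automorphism of $\Z$). After post-composing $c_1$ with a self-homotopy-equivalence of $P(n)$ realising this automorphism, I can assume $c_0|_{\partial X}$ and $c_1|_{\partial X}$ are homotopic; since $\partial X\hookrightarrow W_1$ is a cofibration and $P(n)$ an $H$-space-like target with no higher homotopy obstructing it (being a $K(\pi,1)$ or product of Eilenberg--MacLanes through degree $2$, and $W_1$ is $\le 3$-dimensional relative to considerations here), homotopy extension lets me adjust $c_1$ so that it literally agrees with $c_0$ on $\partial X$. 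The key point making this work is that the obstructions to such an extension/homotopy live in $H^{k+1}(W_1,\partial X;\pi_k(P(n)))$, and for $k\ge 2$ either $\pi_k(P(n))=0$ (when $n\neq 0$) or the only nonzero group is $\pi_2=\Z$ with the relevant cohomology controllable because $W_1$ is $4$-dimensional with $3$-dimensional boundary; one checks these obstruction groups vanish or can be killed.

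Next comes the $L$-boundary, and this is where the flexibility in choosing the disc-bundle structures $G_i$ enters --- this is the main obstacle. We need a single homeomorphism identifying $L_0$ with $L_1$ (via $g_1^{-1}\circ g_0$) over which $c_0$ and $c_1$ become homotopic rel.\ the already-matched $\partial X$. The homeomorphism $g_i$ is the restriction of a bundle trivialisation $G_i\colon \overline{\nu}S_i\cong D^2\wt\times_n S^2$, and the ambiguity in $G_i$ is a self-homeomorphism of $D^2\wt\times_n S^2$ --- equivalently, up to isotopy, elements of $\pi_0$ of the gauge group / the mapping class group of the lens space bundle, which surject onto the relevant automorphisms. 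On $\pi_1(L(n,1))=\Z/n$, changing $g_i$ by such a self-map realises $\pm 1\in\Aut(\Z/n)$ (the $S^2$-direction contributes nothing to $\pi_1$), and when $n=0$ one additionally realises the automorphism $\pm 1$ of $\pi_2(S^1\times S^2)=\Z$ (e.g.\ by a reflection in the $S^2$-fibre, or by the Gluck-type twist and a fibrewise reflection). Using this I can correct $g_1$ so that $(c_0|_{L_0})$ and $(c_1|_{L_1})\circ(g_1^{-1}\circ g_0)$ induce the same map on $\pi_1$ (and on $\pi_2$ when $n=0$). Because $L$ is a $3$-manifold and $P(n)$ has the homotopy type described, two maps $L\to P(n)$ inducing the same maps on $\pi_1$ and $\pi_2$ are homotopic (there are no further obstructions: $\pi_3(P(n))=0$, so the obstruction in $H^3(L;\pi_3(P(n)))$ vanishes, and after fixing $\pi_1,\pi_2$ the lower obstructions vanish too); and this homotopy can be taken rel.\ $\partial L=\emptyset$ and compatibly with the already-fixed identification over $\partial X$ --- here one uses that $L$ and $\partial X$ meet only through the abstract gluing, not within a single piece, so there is no compatibility constraint between the two corrections. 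Finally, having arranged that $c_0$ and $c_1$ agree up to homotopy on all of $\partial W_0\cong\partial W_1$ after these adjustments, I apply the homotopy extension property once more to replace $c_1$ by a genuinely matching map, and then $\ell:=c_0\cup c_1$ is well-defined on $W(g_0,g_1)$ with $\ell|_{W_i}=c_i$ still $3$-connected (in particular a $\pi_1$-isomorphism, and when $n=0$ a $\pi_2$-isomorphism, which is what defines $\eta$ as the $\CP^\infty$-component of $\ell$). The delicate bookkeeping is entirely in showing the two families of corrections --- post-composition with self-equivalences of $P(n)$ versus the choice of bundle trivialisations $G_i$ --- together suffice to kill the discrepancy on $\partial X\sqcup L$ simultaneously; I expect this to require care but no deep input beyond obstruction theory and the classification of self-homeomorphisms of $D^2\wt\times_n S^2$ up to isotopy.
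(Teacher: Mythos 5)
There is a genuine gap, and it is exactly the crux of the paper's argument. After you arrange $c_0|_{\partial X}=c_1|_{\partial X}$, the behaviour on the other boundary component $L$ is no longer free: since $\pi_1(\partial X)\twoheadrightarrow\pi_1(W_i)$ (Lemma~\ref{lem:lens}) and $\pi_1(L_i)\xrightarrow{\cong}\pi_1(W_i)$, the two isomorphisms $\pi_1(L_i)\to\pi_1(P(n))$ are now completely determined, and their "discrepancy" --- the automorphism $\psi(g_0,g_1)$ of $\pi_1(L(n,1))\cong\Z/n$ in Diagram~(\ref{eq:diagram}) --- is an a priori arbitrary unit in $(\Z/n)^\times$. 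You correctly note that changing the bundle structures $G_i$ (equivalently, postcomposing $g_i$ with self-homeomorphisms of $L(n,1)$) can only alter this by $\pm 1$, by Bonahon's theorem on mapping class groups of lens spaces (Proposition~\ref{prop:bonahon}). But then you simply assert "Using this I can correct $g_1$ so that $(c_0|_{L_0})$ and $(c_1|_{L_1})\circ(g_1^{-1}\circ g_0)$ induce the same map on $\pi_1$." This assertion is false unless you already know that $\psi(g_0,g_1)=\pm 1$, and nothing in your argument establishes that. If $n$ has more than two units (e.g.\ $n=8$, where $(\Z/8)^\times=\{\pm 1,\pm 3\}$), $\psi$ could in principle be $3$, and no choice of bundle structure could fix it.

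Establishing that $\psi(g_0,g_1)=\pm 1$ is exactly Lemma~\ref{lem:regularhomotopy}, which is the main technical content of the section and is genuinely geometric: it uses that $S_0$ and $S_1$ are regularly homotopic (as they are homotopic embedded spheres with equal Euler numbers), introduces the notion of a consistent meridional marking, and proves via a pushing-down/tubing argument that consistency is preserved under finger moves (Proposition~\ref{prop:reg-hom-invariance}). One then tracks a meridian through the regular homotopy to show some choice of $g_0,g_1$ yields $\psi=\pm 1$, and concludes via Corollary~\ref{cor:signindependent} that all choices do. Your obstruction-theoretic scaffolding for the rest of the proof (matching on $\partial X$ via a self-equivalence of $P(n)$, homotopy extension, vanishing of higher obstructions because $\pi_k(P(n))=0$ for the relevant $k$, the $\Id\times\text{antipodal}$ correction in the $n=0$ case) is in the right spirit and broadly parallels the paper's construction of $\alpha$, $\beta_i$, $\ell_i$, and $m_i$; the missing ingredient is the input that prevents the discrepancy from lying outside the group $\{\pm 1\}$ that the bundle-structure ambiguity can reach.
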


\begin{remark}
  Note that for $n = \pm 1$ we have $L(\pm 1,1) \cong S^3$, and this lemma holds trivially. Nevertheless the proof goes through in this case without modification, so we shall not separate this case.  In the case $n=0$ recall that $L(0,1) \cong S^1 \times S^2$.
\end{remark}

To prove Lemma~\ref{lem:ellcompatible}, we will use the following technical result.

\begin{lemma}\label{lem:regularhomotopy}
For $i=0,1$ let $g_i\colon L_i\cong L(n,1)$ be any two choices of    orientation preserving    boundary parametrisation. Consider the diagram
\begin{equation}\label{eq:diagram}
\begin{tikzcd}[column sep=5em, row sep = 2em]
H_1(\partial X)\ar[r, "(j_0)_*", "\cong"']\ar[d, "\Id"'] & H_1(W_0)& H_1(L_0)\ar[r, "(g_0)_*", "\cong"']\ar[l, "(k_0)_*"', "\cong"] & H_1(L(n,1))\ar[d, "{\psi(g_0,g_1)}", "\cong"']\\
H_1(\partial X)\ar[r, "(j_1)_*", "\cong"']& H_1(W_1)& H_1(L_1)\ar[l, "(k_1)_*"', "\cong"]\ar[r, "(g_1)_*", "\cong"'] & H_1(L(n,1))
\end{tikzcd}
\end{equation}
where $\psi(g_0,g_1)$ is defined to make the diagram commute. Then the map $\psi(g_0,g_1)$ is multiplication by $\pm1$.
\end{lemma}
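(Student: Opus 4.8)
The plan is to chase the diagram and use the fact that all the groups involved are infinite cyclic (for $n = 0$) or finite cyclic (for $n \neq 0$), so that any automorphism is multiplication by a unit, and then pin down which unit can occur. First I would observe that $H_1(L(n,1)) \cong \Z/n$ (with the convention $\Z/0 = \Z$), and that by Lemma~\ref{lem:lens} each inclusion $\partial X \hookrightarrow W_i$ induces an isomorphism on $H_1$, as does each $L_i \hookrightarrow W_i$ (this follows from Lemma~\ref{lem:lenshtpy}, or directly: $W_i$ is a $\Z$-homology bordism from $\partial X$ to $L_i$). Thus every arrow in the diagram~\eqref{eq:diagram} except possibly $\psi(g_0,g_1)$ is an isomorphism, so $\psi(g_0,g_1)$ is an automorphism of the cyclic group $H_1(L(n,1))$, hence multiplication by a unit of $\Z/n$. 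For $n = 0, \pm 1, \pm 2$ every unit is $\pm 1$ and there is nothing more to prove; the content is in the general case.

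The key point is to show the unit is $\pm 1$ for every $n$. I would do this by tracking orientations: the composite isomorphism $\psi(g_0,g_1)$ is, up to sign coming from the choices of $g_i$, the map on $H_1$ induced by the "change of tubular neighbourhood" self-homeomorphism of $L(n,1)$ (or rather, it is the map identifying $H_1(L_0)$ with $H_1(L_1)$ through the two parametrisations and through $W(g_0,g_1)$). The cleanest route is to use that $L_i = \partial \overline{\nu} S_i$ is the boundary of the disc bundle $D^2 \wt\times_n S^2$, and that $H_1(L_i;\Z) \cong \Z/n$ is generated by the class of the $S^1$-fibre $\mu_i$ of the circle bundle $L_i \to S_i$. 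This meridian class is canonical once an orientation of the normal bundle is fixed; changing the disc bundle structure $G_i$ can at worst reverse the fibre orientation, changing $\mu_i$ to $-\mu_i$. Since $[S_i] \in H_2(X)$ is a \emph{fixed} generator (the same for $i = 0,1$) and $X$, hence $\partial X$ and the normal bundles, are oriented, the meridians $\mu_0, \mu_1$ are carried to each other up to sign under any identification of collar neighbourhoods compatible with orientations; concretely, $(k_i)_*(\mu_i) \in H_1(W_i)$ is, under $(j_i)_*^{-1}$, the image in $H_1(\partial X)$ of a fixed class (a meridian of $S_i$ pushed to the boundary, which only depends on $[S_i]$ and the orientations), so $(j_0)_*^{-1}(k_0)_*(\mu_0) = \pm (j_1)_*^{-1}(k_1)_*(\mu_1)$ in $H_1(\partial X)$. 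Tracing this equality back through the diagram gives $\psi(g_0,g_1)(\text{generator}) = \pm(\text{generator})$, which is exactly the claim.

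The main obstacle I anticipate is making the orientation bookkeeping rigorous: one must argue carefully that the meridian of $S_i$, pushed into $\partial X$ via $W_i$, is a well-defined element of $H_1(\partial X)$ up to sign, independent of $i$, using only that $[S_0] = [S_1]$ as an \emph{oriented} homology class and that all manifolds carry compatible orientations. A convenient way to sidestep delicate point-set arguments is homological: the meridian of $S_i$ maps to $0$ in $H_1(W_i)$ precisely when $n \mid$ (something), but more usefully, under the composite $H_1(L_i) \xrightarrow{(k_i)_*} H_1(W_i) \xleftarrow{(j_i)_*} H_1(\partial X)$ the fibre class $\mu_i$ corresponds to a class whose image under the Poincaré–Lefschetz dual pairing with $[S_i]$ is $1$; since $[S_0] = [S_1]$ this pins $\mu_0$ and $\mu_1$ to the same class up to sign, and the sign is all that the choices of $g_i$ and disc-bundle structure can affect. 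I would also note the remark's point that for $n = \pm 1$ the lens space is $S^3$ and the statement is vacuous, and for $n = 0$ it reduces to the (easy) fact that the only automorphisms of $\Z$ are $\pm 1$, so in all the genuinely interesting cases the argument is the orientation-tracking one just described.
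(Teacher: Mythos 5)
Your approach is correct, and it is genuinely different from the paper's. The paper's proof is two-step: first it quotes Bonahon's computation of the mapping class group of $L(n,1)$ to show (Corollary~\ref{cor:signindependent}) that $\psi(g_0,g_1)$ is independent of the parametrisations up to sign, and then it produces one good choice of $(g_0,g_1)$ by appealing to a regular homotopy from $S_0$ to $S_1$ (via~\cite[Theorem~2.27]{KPRT}), introducing meridional markings and proving (Proposition~\ref{prop:reg-hom-invariance}) that consistency is preserved under finger moves via a pushing-down argument. You bypass both ingredients with a direct homological characterisation of the meridian class, which is arguably cleaner and more self-contained: you need neither the lens-space mapping class group nor the regular homotopy classification of immersed spheres.

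The one place where you should be more precise is the phrase ``a class whose image under the Poincar\'{e}--Lefschetz dual pairing with $[S_i]$ is $1$''. There is no direct pairing between $H_1(\partial X)$ and $H_2(X)$, and the linking form on $H_1(\partial X)\cong\Z/n$ does \emph{not} determine a generator up to sign (e.g.\ $1^2\equiv 3^2\pmod 8$). The correct statement, which makes your argument go through, is the following: let $\alpha\in H_2(X,\partial X)\cong\Z$ be the Poincar\'{e}--Lefschetz dual of the fixed generator $[S_0]=[S_1]\in H_2(X)$, i.e.\ the unique class with $\langle\alpha,[S_i]\rangle=1$ under the perfect evaluation pairing $H_2(X,\partial X)\otimes H_2(X)\to\Z$. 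Then, writing $\partial\colon H_2(X,\partial X)\to H_1(\partial X)$ for the boundary map, one has $(j_i)_*^{-1}(k_i)_*([\mu_i])=\pm\,\partial(\alpha)$ for each $i$, where $\mu_i$ is a meridian of $S_i$. To see this, pick a properly embedded surface $\Sigma\subseteq X$ with $\partial\Sigma\subseteq\partial X$ representing $\alpha$ and transverse to both $S_0$ and $S_1$; then $\Sigma\cap W_i$ is a surface in $W_i$ whose boundary consists of $\partial\Sigma$ together with meridional circles of $S_i$, one for each point of $\Sigma\cap S_i$, with signs; since $[\partial(\Sigma\cap W_i)]=0\in H_1(W_i)$ and the algebraic count $\Sigma\cdot S_i=1$, it follows that $(j_i)_*([\partial\Sigma])=\mp(k_i)_*([\mu_i])$, i.e.\ $(j_i)_*^{-1}(k_i)_*([\mu_i])=\mp\partial(\alpha)$. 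Since $\partial(\alpha)$ depends only on $[S_0]=[S_1]$, this pins down $(j_i)_*^{-1}(k_i)_*([\mu_i])$ up to sign, independently of $i$, and chasing the definition of $\psi(g_0,g_1)$ through Diagram~\eqref{eq:diagram}, using that $(g_i)_*([\mu_i])$ is $\pm$ the standard generator of $H_1(L(n,1))$, yields $\psi(g_0,g_1)=\pm 1$. This completes the argument along your lines.
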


We defer the proof of this lemma until the very end of this section.
To take care of the sign ambiguity in $\psi(g_0,g_1)$ we will use the following self-homeomorphism of $L(n,1)$.

\begin{definition}\label{defn:tau}

Writing $D^2\times D^2\subseteq \C^2$, we may write $D^2\widetilde{\times}_n S^2$, the 2-disc bundle over $S^2$ with Euler number $n$, as $U_1\cup U_2$ where $U_i\cong D^2\times D^2$ and we glue together along $D^2\times S^1\subseteq U_i $ using $\phi(u,v)=(uv^{n},v)$. Note this restricts on the boundary of the total space of the disc bundle to a description of the lens space $L(n,1)$. Writing $S^1\times D^2\subseteq \C^2$, the lens space $L(n,1)$ is the identification space $V_1\cup V_2$ where $V_i\cong S^1\times D^2$ and we glue with the same formula along~$S^1\times S^1\subseteq V_i $

Define a homeomorphism $\tau\colon D^2\widetilde{\times}_n S^2\to D^2\widetilde{\times}_n S^2$ by the formula $\tau(u,v)=(\overline{u},\overline{v})$ when $(u,v)\in U_i$. Note that $\tau$ that restricts to a homeomorphism $\tau\colon L(n,1)\to L(n,1)$, using the same formula for $(u,v)\in V_i$.

\end{definition}

\begin{lemma}\label{lem:taucalculation}
For all $n$, the self-homeomorphism $\tau$ of $D^2\widetilde{\times}_n S^2$ is orientation preserving. Further, it induces multiplication by $-1$ on $H_2(D^2\widetilde{\times}_n S^2)\cong\Z$ and induces multiplication by $-1$ on $H_1(L(n,1))\cong\Z/n$.
\end{lemma}

\begin{proof}
The map $\tau$ is clearly orientation preserving as it comes from the composition of two complex conjugations.

A generator of $H_2(D^2\widetilde{\times}_n S^2)\cong \Z$ is given by the identification space $S^2\cong Y_1\cup Y_2\subseteq U_1\cup U_2$ where $Y_i:=\{0\}\times D^2\subseteq U_i$. Under~$\tau$, each hemisphere of this $2$-sphere is reflected by the complex conjugation $(0,v)\mapsto(0,\overline{v})$. Thus $\tau$ is orientation reversing on this $2$-sphere and the automorphism on $H_2(D^2\widetilde{\times}_n S^2)$ is multiplication by $-1$.

A generator of $H_1(L(n,1))$ is given by the oriented submanifold $S^1\times\{0\}\subseteq V_1$. Under~$\tau$, this circle is sent to itself by complex conjugation in the first factor, and so the orientation is switched. Thus the automorphism on $H_1(L(n,1))$ is multiplication by $-1$.
\end{proof}

\begin{proof}[Proof of Lemma~\ref{lem:ellcompatible} assuming Lemma~\ref{lem:regularhomotopy}] We choose orientation preserving homeomorphisms  $G_i\colon \ol{\nu} S_i\cong D^2\widetilde{\times}_n S^2$ that preserve the 0-section, with corresponding boundary parameterisations $g_i\colon  L_i\cong L(n,1)$. By Lemma~\ref{lem:regularhomotopy}, the map in Diagram (\ref{eq:diagram}) is $\psi(g_0,g_1)=\pm \Id$. By Lemma~\ref{lem:taucalculation} the self-homeomorphism $\tau$ of $D^2\widetilde{\times}_nS^2$ is such that the restriction to the boundary $L(n,1)$ induces multiplication by $-1$ on $H_1(L(n,1))$. By postcomposing $g_0$ with $\tau$, if necessary, we can and will assume that in fact $\psi(g_0,g_1)=\Id$.

In the case $n=0$, we make an additional observation. Note that $L(0,1)\cong S^1\times S^2$. Consider the version of Diagram~\eqref{eq:diagram} in second homology
\begin{equation}
\label{eq:diagram2}
\begin{tikzcd}[column sep=5em, row sep = 2em]
H_2(\partial X)\ar[r, "(j_0)_*", "\cong"']\ar[d, "\Id"'] & H_2(W_0)& H_2(L_0)\ar[r, "(g_0)_*", "\cong"']\ar[l, "(k_0)_*"', "\cong"] & H_2(S^1\times S^2)\ar[d, "{\psi'(g_0,g_1)}", "\cong"']\\
H_2(\partial X)\ar[r, "(j_1)_*", "\cong"']& H_2(W_1)& H_2(L_1)\ar[l, "(k_1)_*"', "\cong"]\ar[r, "(g_1)_*", "\cong"'] & H_2(S^1\times S^2)
\end{tikzcd}
\end{equation}
where $\psi'(g_0,g_1)$ is defined to make the diagram commute. As $H_2(S^1\times S^2)\cong\Z$, the map $\psi'(g_0,g_1)$ must be multiplication by $\pm1$. We claim that as $\psi(g_0,g_1)$ is the identity, so is $\psi'(g_0,g_1)$. To see this, let $x \in H_1(S^1\times S^2)$ and $y\in H_2(S^1\times S^2)$ be generators that intersect algebraically $+1$ in $S^1\times S^2$. As the $g_i$ are orientation preserving, $(g_i)_*^{-1}x$ and $(g_i)_*^{-1}y$ intersect algebraically $+1$ in $L_i$, for $i=0,1$. As $W_0$ is a homology bordism (Lemma~\ref{lem:lens}), by naturality of cup products the corresponding $(j_0)_*^{-1}\circ (k_0)_*\circ(g_0)_*^{-1}x \in H_1(\partial X)$ and $(j_0)_*^{-1}\circ (k_0)_*\circ(g_0)_*^{-1}y \in H_2(\partial X)$ intersect algebraically $+1$ in $\partial X$. Since $W_1$ is a homology bordism, their images under $(g_1)_* \circ (k_1)_*^{-1} \circ (j_1)_* \circ \Id$, in $H_1(S^1 \times S^2)$ and $H_2(S^1 \times S^2)$ respectively, also intersect algebraically~$+1$.
 By commutativity of diagrams \eqref{eq:diagram} and \eqref{eq:diagram2}, these images are the classes $\psi x=x$ and $\psi'y=\pm y$ respectively.
 Thus $\psi' y=y$, and the claim is proven.


Choose a map $\alpha\colon L(n,1)\to B(\Z/n)$ inducing an isomorphism $\pi_1(L(n,1))\cong\Z/n$. For $i=0,1$, choose a map $\beta_i\colon \partial X\to B(\Z/n)$ inducing the map
\[
\pi_1(\partial X)\xrightarrow{(j_i)_*} \pi_1(W_i)\xrightarrow{(k_i)_*^{-1}} \pi_1(L_i)\xrightarrow{(g_i)_*}\pi_1(L(n,1))\xrightarrow{\alpha_*}\Z/n.
\]
For each of $i=0,1$, we may now extend $\beta_i\sqcup (\alpha\circ g_i)\colon \partial X\sqcup L_i\to B(\Z/n)$ to a map $\ell_i\colon W_i\to B(\Z/n)$ inducing the isomorphism $\alpha_*\circ (g_i)_*\circ (k_i)_*^{-1}$ on $\pi_1$.

By commutativity of Diagram (\ref{eq:diagram}),    and the fact that $\psi(g_0,g_1)$ is the identity,    the restrictions $\ell_0|_{\partial W_0}$ and $\ell_1|_{\partial W_1}$ agree under the glueing map $\Id_{\partial X}\sqcup (g_1^{-1}\circ g_0)$, but only up to homotopy. Modify $\ell_0$ by a homotopy in a boundary collar of $\partial W_0$, to arrange that $\ell_0|_{\partial W_0}$ and $\ell_1|_{\partial W_1}$ agree under the glueing map. Now, together, $\ell_0$ and $\ell_1$ define a map $\ell\colon W(g_0,g_1)\to B(\Z/n)$ with the desired properties. When $n\neq 0$, this completes the proof of the lemma.

Now assume $n=0$ and consider the part of the lemma that remains to be shown. We must produce a map $\eta\colon W(g_0,g_1)\to \CP^\infty=K(\Z,2)$ so that its restriction to $W_i$ is an isomorphism on $\pi_2$, for $i=0,1$. But the method is entirely analogous to that in the previous two paragraphs, noting only that we have already shown the map $\psi'(g_0,g_1)$ of diagram \eqref{eq:diagram2} is the identity map. We omit further details.
\end{proof}

\begin{remark}
  Later, in Lemma~\ref{lem:CPinftycompatible}, we will modify the map $\eta$ just constructed. For now, it is merely important to know that one possible map $\eta$ exists.
\end{remark}

The remainder of this section is devoted to the proof of Lemma~\ref{lem:regularhomotopy}. We begin by considering how different choices of boundary parameterisations $g_0$ and $g_1$ affect the map $\psi$ in Diagram~\eqref{eq:diagram}.

Suppose $g_i\colon L_i\cong L(n,1)$ and $g'_i\colon L_i\cong L(n,1)$ are choices of homeomorphism, for $i=0,1$. Then $g'_i=(g'_ig_i^{-1})\circ g_i$. In other words, $g_i$ and $g'_i$ differ by a self-homeomorphism of $L(n,1)$. So to prove Lemma~\ref{lem:regularhomotopy}, it will be important for us to understand the possible automorphisms of $H_1(L(n,1))$ induced by self-homeomorphisms of $L(n,1)$.
Bonahon \cite{MR710104} computed the mapping class groups of lens spaces (see also \cite{MR823282}). We use this to determine the possible automorphisms of $H_1(L(n,1))$ induced by self-homeomorphisms of $L(n,1)$.

\begin{proposition}\label{prop:bonahon}
The map $g_*\colon H_1(L(n,1))\to H_1(L(n,1))$ induced by a homeomorphism $g\colon L(n,1)\cong L(n,1)$ is multiplication by $1$ or $-1$ on $\Z/n$ $($under our fixed identification $H_1(L(n,1))\cong\Z/n)$.
\end{proposition}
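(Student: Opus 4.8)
The plan is to derive this from Bonahon's computation of the mapping class group of lens spaces, the point being that the only interesting self-homeomorphism of $L(n,1)$ is the map $\tau$ of Definition~\ref{defn:tau}. First I dispose of small~$n$. If $n=0$ then $H_1(L(0,1))=H_1(S^1\times S^2)\cong\Z$ and $\Aut(\Z)=\{\pm1\}$; if $|n|\le 2$ then $\Aut(H_1(L(n,1)))$ is itself trivial; in all of these cases there is nothing to prove. So henceforth I assume $|n|\ge 3$, and since replacing $n$ by $|n|$ changes neither the group $\Z/n$ nor the homeomorphism type of $L(n,1)$, I may assume $n\ge 3$.

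Next I invoke Bonahon. Because the ``second parameter'' of $L(n,1)$ is $q=1$, this lens space does not lie in any of the exceptional families (those $L(p,q)$ with $q\not\equiv\pm1\pmod p$, or with $p\le 2$) whose mapping class group is strictly larger than $\Z/2$; hence \cite{MR710104} (see also \cite{MR823282}) gives that $\pi_0\Diff(L(n,1))$ is cyclic of order~$2$ for every $n\ge 3$. Since the natural map $\pi_0\Diff\to\pi_0\Homeo$ is a bijection for closed $3$-manifolds, it follows that $\pi_0\Homeo(L(n,1))\cong\Z/2$.

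Finally I identify the nontrivial class. By Lemma~\ref{lem:taucalculation} the homeomorphism $\tau$ acts by $-1$ on $H_1(L(n,1))\cong\Z/n$, and since $n\ge 3$ this differs from the identity automorphism; in particular $\tau$ is not homotopic, hence not isotopic, to the identity map of $L(n,1)$. Therefore $[\tau]$ is the nontrivial element of $\pi_0\Homeo(L(n,1))\cong\Z/2$, and so an arbitrary self-homeomorphism $g$ of $L(n,1)$ is isotopic either to the identity or to $\tau$. Applying Lemma~\ref{lem:taucalculation} once more, $g_*$ equals $+1$ or $-1$ on $H_1(L(n,1))$, which is the claim.

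There is no serious obstacle here: the only real ingredient is Bonahon's theorem, and only its generic (least delicate) case is needed, the rest being bookkeeping about which lens spaces are exceptional. For completeness I note that a self-contained alternative avoiding the mapping class group is available via Reidemeister torsion: a self-homeomorphism of $L(n,1)$ inducing multiplication by $k$ on $\pi_1\cong\Z/n$ must carry the torsion of $L(n,1)$, represented by $(t-1)^2$, to $(t^k-1)^2$ up to a unit $\pm t^j$, and comparing absolute values under the embedding $t\mapsto e^{2\pi i/n}$ forces $|\sin(k\pi/n)|=|\sin(\pi/n)|$, whence $k\equiv\pm1\pmod n$. I would keep the Bonahon argument in the main text, to match the surrounding exposition, and mention the torsion argument only in a remark, if at all.
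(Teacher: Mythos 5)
Your proof is correct and follows essentially the same route as the paper: dispose of $|n|\le 2$ by counting units of $\Z/n$, then invoke Bonahon's computation of the mapping class group of $L(n,1)$ for larger $n$ and conclude via Lemma~\ref{lem:taucalculation}. You add a few points of extra care (reducing to $n\ge 0$, the $\pi_0\Diff\to\pi_0\Homeo$ comparison in dimension~$3$, and identifying $[\tau]$ as the nontrivial class) that the paper leaves implicit, but the argument is the same.
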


\begin{proof}
As $g_*$ is an automorphism of $\Z/n$, it is given by multiplication by some unit $\mu\in (\Z/n)^\times$. When $n=0$ the only units are $\mu=\pm1$, when $n=1$ the only unit is $\mu=1(=0)$, and when $n=2$ the only unit is $\mu=1$, so in these cases the statement is clear. For $n>2$, the mapping class group of $L(n,1)$ is generated by $\tau$ \cite[Th\'{e}or\`{e}me 3(c)]{MR710104} so $g$ is isotopic to either $\tau$ or the identity map,  and thus by Lemma~\ref{lem:taucalculation} the only possibilities are $\mu=\pm1$.
\end{proof}

\begin{corollary}\label{cor:signindependent}
Up to a sign, the homomorphism $\psi(g_0,g_1)\colon \Z/n\to\Z/n$ from Diagram~(\ref{eq:diagram}) is independent of the choices of $g_0$ and $g_1$.
\end{corollary}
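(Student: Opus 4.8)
The plan is to show that changing the boundary parameterisations $g_0$ and $g_1$ can only alter $\psi(g_0, g_1)$ by a sign, so that the claimed sign-independence follows. First I would invoke Proposition~\ref{prop:bonahon}: any self-homeomorphism of $L(n,1)$ induces multiplication by $\pm 1$ on $H_1(L(n,1)) \cong \Z/n$. Now suppose $g_i'\colon L_i \cong L(n,1)$ is another choice of parameterisation, for $i = 0,1$. Then $g_i' = h_i \circ g_i$ where $h_i := g_i' \circ g_i^{-1}$ is a self-homeomorphism of $L(n,1)$, and by Proposition~\ref{prop:bonahon} the induced map $(h_i)_*$ on $H_1(L(n,1))$ is multiplication by some $\eps_i \in \{+1,-1\}$.

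The second step is to trace through Diagram~\eqref{eq:diagram}. Because $\psi(g_0,g_1)$ is defined as the unique map making that diagram commute, we have, chasing around the diagram,
\[
\psi(g_0,g_1) = (g_1)_* \circ (k_1)_*^{-1} \circ (j_1)_* \circ (j_0)_*^{-1} \circ (k_0)_* \circ (g_0)_*^{-1}
\]
as endomorphisms of $H_1(L(n,1))$ (where I have used that $\Id_{H_1(\partial X)}$ sits in the left column). Replacing $g_i$ by $g_i'$ replaces $(g_0)_*^{-1}$ by $(g_0)_*^{-1}\circ (h_0)_*^{-1}$ on the right and $(g_1)_*$ by $(h_1)_* \circ (g_1)_*$ on the left, while the middle portion $(k_1)_*^{-1}\circ (j_1)_*\circ (j_0)_*^{-1}\circ (k_0)_*$ is unchanged since it involves only the fixed inclusions $j_i, k_i$. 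Hence
\[
\psi(g_0',g_1') = (h_1)_* \circ \psi(g_0,g_1) \circ (h_0)_*^{-1} = \eps_1\,\eps_0^{-1}\,\psi(g_0,g_1),
\]
using that $\Z/n$ is abelian so these scalar multiplications commute. Since $\eps_0,\eps_1 \in \{\pm 1\}$, this differs from $\psi(g_0,g_1)$ by a sign, which is exactly the assertion.

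I expect no serious obstacle here: the only subtlety is making sure the commuting-diagram bookkeeping is done correctly, in particular that the "middle" composite built from the fixed inclusion-induced maps $(j_i)_*,(k_i)_*$ genuinely does not depend on the choice of $g_i$, and that the left column map remains $\Id$. One could alternatively phrase the whole argument without the explicit formula, simply observing that postcomposing $g_i$ with $h_i$ pre/post-composes $\psi$ with $(h_i)_*^{\pm 1} = \pm\Id$; I would present it this way if the displayed composite formula is felt to be cumbersome. Either way the proof is short and uses only Proposition~\ref{prop:bonahon} together with the definition of $\psi$.
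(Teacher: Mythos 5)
Your proof is correct and is exactly the diagram-chase that the paper leaves implicit (the corollary is stated without proof, as it is meant to follow immediately from Proposition~\ref{prop:bonahon} and the definition of $\psi$). The explicit composite formula and the observation that the "middle" part $(k_1)_*^{-1}\circ(j_1)_*\circ(j_0)_*^{-1}\circ(k_0)_*$ is fixed are exactly the right bookkeeping.
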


To prove Lemma~\ref{lem:regularhomotopy}, we will show that there exists \emph{some} choice of parameterisations $g_0,g_1$ such that $\psi(g_0,g_1)=\pm1$. Then Corollary~\ref{cor:signindependent} shows any choice will return $\psi=\pm1$.

\begin{remark} We note that the map $\psi(g_0,g_1)$ must be multiplication by a unit in $\Z/n$, thus for any $n$ where $(\Z/n)^\times=\{1,-1\}$, the objective just outlined is trivially achieved. These are the cases $|n|=0,1,2,3,4,6$. However, the proof below is the same in all cases, so we proceed in generality.
\end{remark}

We build up some technology. Recall that we fixed an orientation of $S^2$. Let $f \colon S^2 \looparrowright X$ be a generic immersion that is also a homotopy equivalence. In a standard abuse of notation, henceforth we will conflate $f$ with the immersed submanifold given by its image $f(S^2)$.
  A closed regular neighbourhood $\overline{\nu}(f)$ is homeomorphic to the effect of performing some number of self-plumbings,~$k$ say, of the Euler number $n$ disc bundle over $S^2$. A standard Kirby diagram for this plumbed disc bundle is given in Figure~\ref{fig:plumbing} (see e.g.~\cite[p.~202]{MR1707327} or \cite[Diagram~2.2]{F}), where the clasps $C_i$ are one of the two possibilities depicted (it will not be relevant for us which ones). We identify the tubular neighbourhood with this standard description.

\begin{figure}
\begin{tikzpicture}[scale=0.95]

\begin{scope}[shift={(-0.4,0)}]
\begin{knot}[
clip width=10, clip radius=5pt,
ignore endpoint intersections=false ,
]
\strand[thick] (7.8,2.05) ellipse (0.29cm and 0.65cm);
\filldraw[thick] (7.8,2.7) circle (2pt);
\strand[thick] (3.8,2.05) ellipse (0.29cm and 0.65cm);
\filldraw[thick] (3.8,2.7) circle (2pt);
\strand[thick] (0.8,2.05) ellipse (0.29cm and 0.65cm);
\filldraw[thick] (0.8,2.7) circle (2pt);
\strand[thick] (0,0) -- (9,0) to [out=right, in=down] (9.3,0.3) -- (9.3,0.7) to
[out=up, in=right] (9, 1) -- (7.5,1) to [out=left, in=down] (7.2, 1.3) -- (7.2, 1.5) to [out=up, in=left] (7.5,1.8) -- (8, 1.8) to [out=right, in=up] (8.3, 1.5) -- (8.3,1) to [out=right, in=left] (8.8, 1) -- (8.8, 1.5) to [out=up, in=right] (8,2.3) -- (7.5, 2.3) to [out=left, in=up] (6.7, 1.5) -- (6.7, 1.3) to [out=down, in=right] (6.4, 1)
-- (6,1)
;
\strand[thick]
(5.2, 1) -- (3.5,1) to [out=left, in=down] (3.2, 1.3) -- (3.2, 1.5) to [out=up, in=left] (3.5,1.8) -- (4, 1.8) to [out=right, in=up] (4.3, 1.5) -- (4.3,1) to [out=right, in=left] (4.8, 1) -- (4.8, 1.5) to [out=up, in=right] (4,2.3) -- (3.5, 2.3) to [out=left, in=up] (2.7, 1.5) -- (2.7, 1.3) to [out=down, in=right] (2.4, 1)
;
\strand[thick]
(2.4, 1) -- (0.5,1) to [out=left, in=down] (0.2, 1.3) -- (0.2, 1.5) to [out=up, in=left] (0.5,1.8) -- (1, 1.8) to [out=right, in=up] (1.3, 1.5) -- (1.3,1) to [out=right, in=left] (1.8, 1) -- (1.8, 1.5) to [out=up, in=right] (1,2.3) -- (0.5, 2.3) to [out=left, in=up] (-0.3, 1.5) -- (-0.3, 1.3) to [out=down, in=right] (-0.6, 1)
;
\strand[thick]
(-0.6,1) -- (-0.7,1) to [out=left, in=up] (-1,0.7) -- (-1,0.3) to [out=down, in=left] (-0.7, 0) -- (0,0)
;
\flipcrossings{2,3,6,7,10,11}
\end{knot}

\draw[thick, fill=white] (8.2,1.35) rectangle (8.9,0.65);
\node at (8.56,1) {$C_k$};
\draw[thick, fill=white] (4.2,1.35) rectangle (4.9,0.65);
\node at (4.56,1) {$C_2$};
\draw[thick, fill=white] (1.2,1.35) rectangle (1.9,0.65);
\node at (1.56,1) {$C_1$};
\node at (5.6,1) {$\hdots$};
\node at (-1.3,0.4) {$n$};
\end{scope}

\begin{scope}[shift={(0.4,-0.7)}]
\node at (10.8,2.65) {$=$};
\draw[thick, fill=white] (11.1,3.2) rectangle (12.1,2.2);
\draw[thick, fill=white] (9.8,3) rectangle (10.5,2.3);
\node at (10.16,2.65) {$C_i$};
\node at (10.8,1.95) {or};
\begin{knot}[
clip width=10, clip radius=5pt,
ignore endpoint intersections=false ,
]
\strand[thick] (11.1,2.7) -- (12.1, 2.7);
\strand[thick] (11.35,3.2) -- (11.35,2.55) to [out=down, in=left] (11.55, 2.35) -- (11.65,2.35) to [out=right, in=down] (11.85, 2.55) -- (11.85, 3.2);
\flipcrossings{1}
\end{knot}
\end{scope}

\begin{scope}[shift={(0.4,-2.2)}]
\node at (10.8,2.65) {$=$};
\draw[thick, fill=white] (11.1,3.2) rectangle (12.1,2.2);
\draw[thick, fill=white] (9.8,3) rectangle (10.5,2.3);
\node at (10.16,2.65) {$C_i$};
\begin{knot}[
clip width=10, clip radius=5pt,
ignore endpoint intersections=false ,
]
\strand[thick] (11.1,2.7) -- (12.1, 2.7);
\strand[thick] (11.35,3.2) -- (11.35,2.55) to [out=down, in=left] (11.55, 2.35) -- (11.65,2.35) to [out=right, in=down] (11.85, 2.55) -- (11.85, 3.2);
\flipcrossings{2}
\end{knot}
\end{scope}
\end{tikzpicture}
\caption{A Kirby diagram for the $D^2$-bundle over $S^2$ with Euler number~$n$, and with~$k$ self-plumbings.}\label{fig:plumbing}
\end{figure}
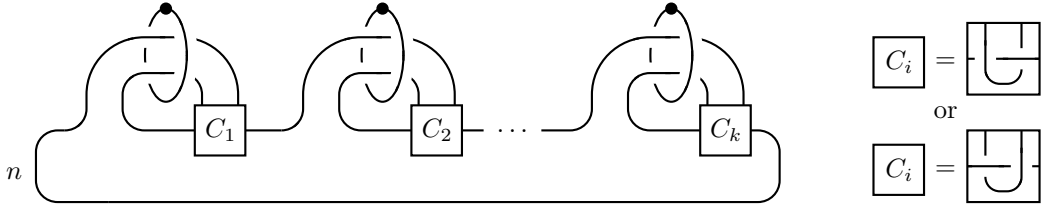

A loop $\gamma$ in $X$ is called a \emph{meridian} to $f$ if it is isotopic in $X\sm f$ to a meridian to the $2$-handle curve in the Kirby diagram.
Note that as both $X$ and $S^2$ are oriented, there is a preferred orientation on a meridian to $f$. Namely, pick an embedded disc bounded by the meridian and intersecting the 2-handle geometrically once. Orient the disc so that this is a positive intersection and then restrict this orientation on the disc its boundary.

\begin{lemma}\label{lem:plumbing}
Assume that $f$ has $k$ double points. Then
\[
H_r(\overline{\nu}(f))\cong
\left\{\begin{array}{ll}
\Z & r=0,\\
\Z^k & r=1,\\
\Z & r=2,\\
0 & r\geq 3,
\end{array}
\right.
\qquad \qquad \qquad
H_r(\partial \overline{\nu}(f))\cong
\left\{\begin{array}{ll}
\Z & r=0,\\
\Z^k\oplus\Z/n & r=1,\\
\Z^k & r=2,\\
\Z & r=3,\\
0 & r\geq 4,
\end{array}
\right.
\]

The summand $\Z^k$ of both $H_1(\ol{\nu}(f))$ and $H_1(\partial\ol{\nu}(f))$ is generated by a collection of meridians to the dotted circles in Figure~\ref{fig:plumbing}.   The summand $\Z/n\subseteq H_1(\partial\ol{\nu}(f))$ is generated by a meridian of~$f$.
\end{lemma}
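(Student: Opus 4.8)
\textbf{Proof plan for Lemma~\ref{lem:plumbing}.}

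The plan is to build $\overline{\nu}(f)$ explicitly as a regular neighbourhood of an immersed sphere and read off its homotopy type, then pass to the boundary $S^1$-bundle via a long exact sequence. First I would observe that since $f \colon S^2 \looparrowright X$ is a generic immersion with $k$ double points, a regular neighbourhood $\overline{\nu}(f)$ deformation retracts onto the image $f(S^2)$, which is homotopy equivalent to a wedge $S^2 \vee \bigvee_{j=1}^k S^1$: each transverse double point, when resolved in the 2-complex $f(S^2)$, contributes one extra $1$-cell (identifying two points on the sphere creates a loop). More precisely, $f(S^2)$ is obtained from $S^2$ by gluing $k$ pairs of points, and gluing a pair of points on a connected CW complex is homotopy equivalent to wedging on a circle. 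Hence $H_r(\overline{\nu}(f)) \cong H_r(S^2 \vee \bigvee^k S^1)$, which gives $\Z, \Z^k, \Z, 0$ in degrees $0, 1, 2, \geq 3$ as claimed.

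Next I would compute $H_*(\partial \overline{\nu}(f))$ using the Gysin sequence of the $S^1$-bundle $\pi \colon \partial \overline{\nu}(f) \to \overline{\nu}(f) \simeq f(S^2)$ with Euler class $e$. Pulling back the disc bundle structure, the Euler class $e \in H^2(\overline{\nu}(f);\Z) \cong \Z$ is the normal Euler number, which one checks equals $n$ (it is the self-intersection of the underlying sphere; the double points of $f$ do not affect the normal bundle restricted to the sphere since the disc bundle structure is that of the embedded core). The Gysin sequence
\[
\cdots \to H_{r}(\overline{\nu}(f)) \xrightarrow{\cap e} H_{r-2}(\overline{\nu}(f)) \to H_{r-1}(\partial \overline{\nu}(f)) \to H_{r-1}(\overline{\nu}(f)) \xrightarrow{\cap e} H_{r-3}(\overline{\nu}(f)) \to \cdots
\]
then computes everything: in the relevant range the only non-trivial cap product $\cap e$ is $H_2(\overline{\nu}(f)) = \Z \xrightarrow{\times n} H_0(\overline{\nu}(f)) = \Z$, whose cokernel $\Z/n$ injects into $H_1(\partial \overline{\nu}(f))$ and whose kernel ($0$ if $n \neq 0$, $\Z$ if $n = 0$) feeds into $H_2(\partial \overline{\nu}(f))$; together with the $H_1$ and $H_2$ of the base pulling back isomorphically (as $\cap e$ vanishes on $H_1$ and $H_0$ for degree reasons) this yields $H_1(\partial \overline{\nu}(f)) \cong \Z^k \oplus \Z/n$, $H_2(\partial \overline{\nu}(f)) \cong \Z^k$ (the $n=0$ case contributes an extra $\Z$ to $H_2$ coming from the base's $H_2$, but in either case the bundle section gives $\Z^k$; I will need to handle $n=0$ versus $n \neq 0$ carefully here), $H_3(\partial \overline{\nu}(f)) \cong \Z$ (this is $H_3$ of the closed oriented $3$-manifold, or equivalently $H_1(\overline{\nu}(f)) \xleftarrow{} $ the top of the Gysin sequence paired with orientability), and $0$ above. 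Finally, the statement that the $\Z/n$ summand is generated by a meridian is immediate from the Gysin description: the meridian is precisely the image of the $S^1$-fibre class, i.e.\ the generator of $\coker(\cap e \colon H_2 \to H_0) = \Z/n \subseteq H_1(\partial \overline{\nu}(f))$, with its preferred orientation as specified.

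The main obstacle I anticipate is being careful about the $n=0$ versus $n\neq 0$ dichotomy in the Gysin sequence and making sure the splitting $H_1(\partial \overline{\nu}(f)) \cong \Z^k \oplus \Z/n$ is genuine (not merely an extension) — this follows because the map $H_1(\partial \overline{\nu}(f)) \to H_1(\overline{\nu}(f)) \cong \Z^k$ is split surjective via the disc bundle's unit normal section over the $1$-skeleton, or more cleanly because $\Z^k$ is free so the short exact sequence $0 \to \Z/n \to H_1(\partial\overline{\nu}(f)) \to \Z^k \to 0$ splits. A secondary point requiring care is justifying that $f(S^2) \simeq S^2 \vee \bigvee^k S^1$ and that $\overline{\nu}(f)$ retracts onto it; this is standard for generic immersions but worth a sentence. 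Everything else is a routine homology computation.
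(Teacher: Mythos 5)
Your homotopy identification $\overline{\nu}(f)\simeq f(S^2)\simeq S^2\vee\bigvee^k S^1$ and the resulting groups $H_*(\overline{\nu}(f))$ are correct, but there is a genuine gap in the second step: the map $\partial\overline{\nu}(f)\to\overline{\nu}(f)\simeq f(S^2)$ is \emph{not} an $S^1$-bundle, so there is no Euler class and no Gysin sequence to invoke. Away from double points the tubular neighbourhood is a $D^2$-bundle and $\partial\overline{\nu}(f)$ is a circle bundle, but at each double point the neighbourhood is a local model $D^2\times D^2$ with the two sheets as $D^2\times 0$ and $0\times D^2$, and the part of $\partial\overline{\nu}(f)$ living over the wedge point of $f(S^2)$ is not a circle. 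Concretely, $\overline{\nu}(f)$ is the $k$-fold self-plumbing of the Euler-number-$n$ disc bundle over $S^2$, and the boundary of a plumbing is not a circle bundle over the image; it is only a graph manifold. So the sentence ``using the Gysin sequence of the $S^1$-bundle $\pi\colon\partial\overline{\nu}(f)\to\overline{\nu}(f)$ with Euler class $e$'' asserts a structure that does not exist.

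The good news is that the shape of your computation can be rescued, because what you are actually using is the long exact sequence of the pair $(\overline{\nu}(f),\partial\overline{\nu}(f))$ combined with Lefschetz duality $H_r(\overline{\nu}(f),\partial\overline{\nu}(f))\cong H^{4-r}(\overline{\nu}(f))$, which has exactly the Gysin-like form you wrote down; the role of ``$\cap e$'' is played by the composite $H_2(\overline{\nu}(f))\to H_2(\overline{\nu}(f),\partial)\cong H^2(\overline{\nu}(f))$, and this is multiplication by $n$ not because of an Euler class of a nonexistent bundle, but because $[f]\cdot[f]=n$ in $\overline{\nu}(f)$. The connecting map $H_2(\overline{\nu}(f),\partial)\to H_1(\partial\overline{\nu}(f))$ then sends a properly embedded normal disc fibre (at a nonsingular point) to its boundary meridian, which is how you justify the last sentence of the lemma without appealing to a ``fibre class.'' If you rewrite the argument this way it becomes correct. (You should also double-check the $n=0$ case rather than wave it off: in that case $\ker$ of the middle map is $\Z$, not $0$, and the disc-bundle-section argument you sketch does not dispose of the extra summand.) For comparison, the paper takes a different route entirely: it writes down an explicit Kirby diagram for the self-plumbed disc bundle (a single $n$-framed $2$-handle clasped by $k$ dotted circles) and reads the homology of $\overline{\nu}(f)$ from it directly, then switches dots to zeros to get a surgery description of $\partial\overline{\nu}(f)$, from which the linking matrix gives $H_1\cong\Z^k\oplus\Z/n$ with the meridian generating the $\Z/n$ factor. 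Your algebraic approach, once the $S^1$-bundle claim is replaced by the pair sequence plus duality, is a legitimate alternative that avoids Kirby calculus but requires slightly more care in identifying the key map and in the edge case $n=0$.
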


\begin{proof}From Figure~\ref{fig:plumbing} we can read off the claimed homology groups for $\ol{\nu}(f)$; it is homotopy equivalent to $S^2 \vee \bigvee^k S^1$.    One can deduce the homology groups of $\partial \overline{\nu}(f)$ from the long exact sequence of the pair $(\overline{\nu}(f),\partial \overline{\nu}(f))$, together with the fact that $H_2(\overline{\nu}(f))\to H_2(\overline{\nu}(f), \partial \overline{\nu}(f))$ can be identified with $\Z\xrightarrow{n}\Z$ by viewing it as the adjoint to the intersection pairing.
Alternatively, by switching the dots to 0's in Figure~\ref{fig:plumbing}, we obtain a link surgery diagram for $\partial \ol{\nu}(f)$, from which we can read off the claimed homology groups for this boundary manifold.
The claims about generators are clear from the picture.
\end{proof}

\begin{remark}
To each double point of $f$, we can assign a \emph{double point loop} on the image $f(S^2)$. This is a loop on the surface that leaves the double point on one sheet of the immersion and returns on the other, missing all other double points. The meridians to the dotted circles in Figure~\ref{fig:plumbing} are isotopic in $\ol{\nu}(f)$ to double point loops on the surface.
\end{remark}

\begin{proposition}\label{prop:immersionhomology}
 We have that that $H_1(X\sm f) \cong \Z/n$, generated by a meridian of $f$, and moreover the inclusion induced map $j_* \colon H_1(\partial X) \to H_1(X \sm f)$ is an isomorphism.
\end{proposition}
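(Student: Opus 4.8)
Throughout write $N := \overline{\nu}(f)$, so that $\partial N = \partial\overline{\nu}(f)$ and $X$ decomposes as $N \cup_{\partial N} W$, where $W := X \sm \nu(f)$ is a compact oriented $4$-manifold with $\partial W = \partial X \sqcup \partial N$. Note that $X\sm f$ deformation retracts rel $\partial X$ onto $W$, so the two are homologically interchangeable with $\partial X$ fixed. The key non-formal input I will use repeatedly is that, since $f$ is a homotopy equivalence, its class $[f]$ generates $H_2(X) \cong \Z$; being the image of the zero section and $f$ factoring through $N$, it also generates $H_2(N) \cong \Z$ by Lemma~\ref{lem:plumbing}; and the value of the intersection pairing $H_2(N) \times H_2(N,\partial N) \to \Z$ on $[f]$ equals $\lambda_X([f],[f]) = n$, since the zero section and its parallel copies lie in $N \subseteq X$ and the number may be computed in either manifold. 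Finally, $X$ is simply connected, so $H_1(X) = 0$.

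For the first two assertions I compute two cokernels. The long exact sequence of $(X,\partial X)$ with $H_1(X)=0$ gives $H_1(\partial X) \cong \coker\bigl(H_2(X) \to H_2(X,\partial X)\bigr)$; Poincar\'{e}--Lefschetz duality and the universal coefficient theorem identify $H_2(X,\partial X) \cong H^2(X) \cong \Hom(H_2(X),\Z) \cong \Z$, under which the map from $H_2(X)$ is $x \mapsto \lambda_X(x,-)$, so on the generator $[f]$ it is multiplication by $n$; hence $H_1(\partial X) \cong \Z/n$. Similarly, excision identifies $H_*(X, X\sm f) \cong H_*(N, \partial N)$, and duality with universal coefficients gives $H_2(N,\partial N) \cong H^2(N) \cong \Hom(H_2(N),\Z) \cong \Z$, generated by the class $[D]$ of a normal disc fibre $D$ rel boundary, since $D$ meets the zero section transversally once while the zero section generates $H_2(N)$. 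The long exact sequence of $(X, X\sm f)$ with $H_1(X)=0$ gives $H_1(X\sm f) \cong \coker\bigl(H_2(X) \to H_2(X, X\sm f)\bigr)$; the generator $[f]$ maps to the image of the zero section in $H_2(N,\partial N)$, which pairs with $[f] \in H_2(N)$ to give $n$ while $[D]$ pairs to give $1$, so $[f]$ maps to $n[D]$. Hence $H_1(X\sm f) \cong \Z/n$, and since the connecting map sends $[D]$ to $[\partial D]$, a meridian of $f$, this $\Z/n$ is generated by a meridian.

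For the third assertion it suffices to show $j_*$ is surjective, since it is then a surjection $\Z/n \to \Z/n$ and hence an isomorphism. From the long exact sequence of $(W,\partial X)$, surjectivity of $j_*$ is equivalent to $H_1(W, \partial X) = 0$. Poincar\'{e}--Lefschetz duality for $W$, with its boundary split as $\partial X \sqcup \partial N$, gives $H_1(W, \partial X) \cong H^3(W, \partial N)$, and excision gives $H_*(W, \partial N) \cong H_*(X, N)$. Running the long exact sequence of $(X,N)$, using Lemma~\ref{lem:plumbing} for $H_*(N)$, the vanishing of $H_1(N) \to H_1(X) = 0$, and the fact that $H_2(N) \to H_2(X)$ is an isomorphism (both are $\Z$, with the zero section mapping to a generator), one finds $H_*(X,N)$ vanishes except $H_2(X,N) \cong \Z^k$. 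Since $\Z^k$ is free and $H_3(X,N) = 0$, the universal coefficient theorem gives $H^3(W,\partial N) = 0$, hence $H_1(W, \partial X) = 0$, so $j_*$ is onto. Combined with the previous paragraph, $j_*$ is an isomorphism.

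The step I expect to be the main obstacle is the computation $H_*(X, N) = (0, 0, \Z^k, 0, 0)$ — equivalently $H_*(W, \partial N)$ — together with the correct invocation of Poincar\'{e}--Lefschetz duality for $W$ with the asymmetric decomposition $\partial W = \partial X \sqcup \partial N$; everything else is routine diagram chasing once one has recorded that $[f]$ generates $H_2$ with self-intersection $n$.
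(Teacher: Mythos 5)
Your proof is correct and shares the paper's overall strategy — compute $H_1(\partial X)$ via the long exact sequence of $(X,\partial X)$ and Poincar\'{e}--Lefschetz duality, compute $H_1(X\sm f)$, and then show $j_*$ is onto by proving $H_1(W,\partial X)=0$ via duality and universal coefficients — but the intermediate computations are organized differently enough to be worth comparing. For $H_1(X\sm f)$, the paper runs Mayer--Vietoris for $X=W\cup N$ and so uses the full statement of Lemma~\ref{lem:plumbing}, including $H_*(\partial N)$; you instead excise to $(N,\partial N)$, dualize to find $H_2(N,\partial N)\cong\Z\langle[D]\rangle$ with $[D]$ a fibre disc, and read off that the generator of $H_2(X)$ hits $n[D]$. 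This is a tidier route and gives the meridional generator for free via the connecting map $[D]\mapsto[\partial D]$. The only step worth fleshing out is the assertion that the self-pairing of the zero section inside $N$ equals $\lambda_X([f],[f])=n$; it is true, but the slogan ``the number may be computed in either manifold'' conceals the compatibility of the intersection pairings under the codimension-zero inclusion $N\hookrightarrow X$, which holds here because $\iota_*\colon H_2(N)\to H_2(X)$ is an isomorphism and the restriction $H^2(X)\to H^2(N)$ carries the Poincar\'{e} dual of $[f]$ to the Poincar\'{e} dual of $[f]$. For the third part both you and the paper dualize $H_1(W,\partial X)\cong H^3(W,\partial N)$ and then invoke universal coefficients; the paper harvests $H_2(W,\partial N)\cong\Z^k$ and $H_3(W,\partial N)=0$ from the Mayer--Vietoris it already ran, whereas you excise again to the pair $(X,N)$ and exploit the simplicity of $H_*(X)$. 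Your version of this last step is arguably leaner, and the step you flag as the possible obstacle (the vanishing of $H_*(X,N)$ outside degree $2$) is in fact fine as you wrote it.
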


\begin{proof}
For the first claim, we observe that since $X$ is simply connected, there is an exact sequence
\[
\begin{tikzcd}
0\ar[r] & H_2(\partial X)\ar[r]& H_2(X)\ar[r, "\iota_*"]&H_2(X,\partial X)\ar[r]&H_1(\partial X)\ar[r]& 0.
\end{tikzcd}
\]
Using Poincar\'{e}-Lefschetz duality and the universal coefficient theorem, we identify $H_2(X,\partial X)\cong H_2(X)^*$. Under this identification $\iota_*$ becomes the adjoint of $\lambda_X$, and hence the sequence is isomorphic to
\[
\begin{tikzcd}
0\ar[r] & H_2(\partial X)\ar[r]& \Z\ar[r, "n"]&\Z\ar[r]&H_1(\partial X)\ar[r]& 0.
\end{tikzcd}
\]
From this, we deduce that $H_1(\partial X)\cong\Z/n$ (and $H_2(\partial X)=0$, although we will not need this).

We now move on to computing that $H_1(X\sm f)\cong \Z/n$. Write $W:=X\sm \overline{\nu}(f)$ and $Y:=\partial \overline{\nu}(f)$. Using the groups computed in Lemma~\ref{lem:plumbing}, the Mayer-Vietoris sequence for $X=W\cup \overline{\nu}(f)$ now shows that the inclusion  -induced    maps $H_3(Y)\to H_3(W)$, $H_2(Y)\to H_2(W)$ are isomorphisms, and that the    inclusion-induced map is an isomorphism
\begin{equation}
\label{eq:inclusion}
\begin{tikzcd}
\underbrace{H_1(Y)}_{\cong\,\Z^k\oplus\Z/n}\ar[r, "\cong"]& \underbrace{H_1(\ol{\nu}(f))}_{\cong\Z^k}\oplus H_1(W).
\end{tikzcd}
\end{equation}
From the classification of finitely generated abelian groups, we deduce that $H_1(W)\cong\Z/n$. The $\Z/n$ summand of $H_1(Y)$ is generated by a meridian of $f$, by Lemma~\ref{lem:plumbing}. The order of this meridian element is preserved under the map~\eqref{eq:inclusion}, as this is an isomorphism, in particular implying it maps to a generator of the torsion summand $H_1(W)$.   Moreover, taking these three facts above, combined with the long exact sequence of the pair $(W,Y)$, we may compute that $H_3(W,Y)=0$ and $H_2(W,Y)\cong\ker(H_1(Y)\to H_1(W))\cong \Z^k$.

For the final claim, consider that part of the long exact sequence for the pair $(W,\partial X)$ is
\[
\begin{tikzcd}
\underbrace{H_1(\partial X)}_{\cong\,\Z/n}\ar[r, "j_*"]& \underbrace{H_1(W)}_{\cong\,\Z/n}\ar[r]& H_1(W,\partial X).
\end{tikzcd}
\]
Using Poincar\'{e}-Lefschetz duality and the universal coefficient theorem, we identify $H_1(W,\partial X)\cong \Ext^1_\Z(H_2(W,Y),\Z)\oplus H_3(W,Y)^*$. But this group is 0 by the computation above. This shows $j_*$ is a surjective map $\Z/n\to\Z/n$, implying it is an isomorphism as required.
\end{proof}

We have shown that $j_*\colon H_1(\partial X)\xrightarrow{\cong} H_1(X\sm f)$ is an isomorphism, but we wish to be more careful about keeping track of which isomorphism it is. Fix a generator $\gamma \in H_1(\partial X) \cong \Z/n$. Let $k \colon \partial \ol{\nu}( f) \to X \sm \nu (f)$ be the inclusion map.

\begin{definition}
  A \emph{meridional marking} for $f$ is a homology class $\delta \in H_1(\partial \ol{\nu} (f))$ that contains a representative given by a meridian to $f$ with the preferred orientation. The meridionally marked immersion $(f,\delta)$ is said to be \emph{consistent} (with respect to $\gamma$) if $j_*(\gamma) = k_*(\delta) \in H_1(X \sm \nu (f)) \cong \Z/n$.
\end{definition}

We consider the behaviour of consistency under finger moves on the immersion.

\begin{proposition}\label{prop:reg-hom-invariance}
  Suppose that $f'\colon S^2\looparrowright X$ is obtained from $f$ by a self finger move. Let $\delta'\in H_1(\partial\ol{\nu} (f'))$ be a meridional marking for $f'$ obtained by taking a representative meridian for $\delta$ disjoint from a neighbourhood of the finger move arc. Then $(f',\delta')$ is consistent if and only if $(f,\delta)$ is consistent.
\end{proposition}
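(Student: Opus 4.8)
The plan is to reduce \emph{consistency} to a linking‑number equation which is manifestly unaffected by a modification of $f$ supported in a ball.

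First I would compute the inclusion‑induced map $j_*\colon H_1(\partial X)\to H_1(X\sm\nu(f))$ in terms of intersection numbers. Since $X$ is simply connected and $[f(S^2)]=x\in H_2(X)$ generates $H_2(X)\cong\Z$ with $\lambda_X(x,x)=n$, Proposition~\ref{prop:immersionhomology} gives $H_1(X\sm\nu(f))\cong\Z/n$, generated by $k_*[\delta]$, where $\delta$ is the preferred‑orientation meridian. I claim that for any loop $\gamma\subseteq\partial X$ and any compact oriented surface $D\subseteq X$ with $\partial D=\gamma$ transverse to $f$ one has
\[
j_*[\gamma]=(D\cdot f)\cdot k_*[\delta]\in H_1(X\sm\nu(f)).
\]
To prove this, delete $\nu(f)$ from $D$: the boundary of $D\sm\nu(f)$ consists of $\gamma$ together with one meridian of $f$ for each point of $D\cap f(S^2)$, and, $f(S^2)$ being connected, each such meridian is homologous in $X\sm\nu(f)$ to $\pm k_*[\delta]$ with sign the local intersection sign of $D$ with $f$; summing these relations yields the identity. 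Consequently $(f,\delta)$ is consistent, i.e.\ $j_*[\gamma]=k_*[\delta]$, if and only if $(D\cdot f-1)\cdot k_*[\delta]=0$ in $\Z/n$, that is, if and only if $D\cdot f\equiv 1\pmod n$.

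Exactly the same discussion applies to $f'$, which is again a generic immersion and a homotopy equivalence since it is regularly homotopic to $f$: the space $f'(S^2)$ is connected, so every preferred‑orientation meridian of $f'$ represents the same class $\delta'\in H_1(\partial\ol{\nu}(f'))$ and, by Proposition~\ref{prop:immersionhomology}, $k'_*[\delta']$ generates $H_1(X\sm\nu(f'))\cong\Z/n$; hence $(f',\delta')$ is consistent if and only if $D\cdot f'\equiv 1\pmod n$. It remains to compare the two intersection numbers. A self finger move along an arc $\alpha$ is supported in a regular neighbourhood $B$ of $\alpha$ together with two small embedded subdiscs of $f$; since $\alpha$ and $f(S^2)$ lie in $\operatorname{int}(X)$ we may take $B\subseteq\operatorname{int}(X)$ to be a ball disjoint from $\partial X$, and then $f$ and $f'$ agree on $X\sm B$, where we also arrange their tubular neighbourhoods to agree. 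Removing a ball does not change $H_1$, so $H_1(X\sm\operatorname{int}(B))\cong H_1(X)=0$ and $\gamma$ bounds a compact oriented surface $D\subseteq X\sm\operatorname{int}(B)$, which (after a perturbation supported away from $B$) we may assume is transverse to $f$, hence also to $f'$. The meridian representing $\delta'$ may likewise be chosen disjoint from $B$, where it represents $\delta$ as well. Every point of $D\cap f(S^2)$ lies in $X\sm B$, where $f(S^2)=f'(S^2)$ with matching orientations, so $D\cdot f=D\cdot f'$; therefore $(f,\delta)$ is consistent if and only if $(f',\delta')$ is.

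The delicate point, and the one to execute carefully, is the identity for $j_*[\gamma]$ displayed above: it must hold on the nose, not merely up to an overall sign, which forces one to check that the preferred orientation of a meridian, the sign convention for an intersection point of $D$ with $f$, and the boundary orientation induced on $D\sm\nu(f)$ are mutually compatible. A global sign ambiguity would be fatal, since consistency asserts precisely that a certain unit of $(\Z/n)^\times$ equals $+1$. Everything else is routine once those conventions are pinned down.
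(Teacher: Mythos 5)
Your argument is correct, but it takes a genuinely different route from the paper. The paper's proof works directly with a surface $\Sigma$ witnessing the homology $j_*(\gamma)\sim k_*(\delta)$: in the forward direction one isotopes $\Sigma$ off a neighbourhood of the finger move arc, but in the backward direction $\Sigma$ may meet the Whitney disc that reverses the finger move, and one must first apply the pushing-down/tubing procedure of Freedman--Quinn to arrange disjointness (this is the content of Figure~\ref{fig:pushingdown}). Your proof instead encodes consistency as the congruence $D\cdot f\equiv 1\pmod n$ for a single surface $D$ bounding $\gamma$ in $X\sm B$, where $B$ is the ball supporting the finger move; since $D$, the reference meridian, and $f=f'$ all live in $X\sm B$, the condition is visibly the same for $f$ and $f'$. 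This is symmetric in $f$ and $f'$ and dispenses with the pushing-down step entirely, which is a real simplification. What it buys you it pays for in generality: you use $H_1(X\sm B)=0$ to find $D$ in the complement of the ball, which is available here because $X\simeq S^2$, whereas the paper's argument would survive in settings without this vanishing. One small remark: your stated concern that a global sign ambiguity in $j_*[\gamma]=\pm(D\cdot f)\,k_*[\delta]$ ``would be fatal'' is overcautious for the purpose at hand. Whatever the universal sign determined by the orientation conventions, it is the same for $f$ and for $f'$, so $D\cdot f=D\cdot f'$ already yields consistency-iff-consistency; pinning down the sign is only needed if you want the criterion in the precise form $D\cdot f\equiv 1$ rather than $\equiv -1$, which the proposition itself does not require.
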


\begin{proof}
Suppose $(f,\delta)$ is consistent. Then there exists a surface $\Sigma\subseteq X\sm \nu(f)$ witnessing that $j_*(\gamma)$ and $k_*(\delta)$ are homologous. We may assume this surface is disjoint from a neighbourhood of the finger move arc. Write the inclusion $k'\colon \partial\ol{\nu} (f')\to X\sm \nu(f')$. Then $\Sigma$ witnesses that $j_*(\gamma)$ and $k'_*(\delta')$ are homologous in $X\sm\nu( f')$, so $(f',\delta')$ is consistent.

Conversely, let $\Sigma$ be a surface in $X\sm \nu(f')$ witnessing that $j_*(\gamma)$ and $k'_*(\delta')$ are homologous. Choose a Whitney disc $V$ reversing the finger move. We can assume that meridians in the markings $\delta$ and $\delta'$ are disjoint from the Whitney arcs. It may be that $V$ intersects $\Sigma$. If this is the case, perform finger moves on $\Sigma$, guided by arcs in $V$, to push intersections between $V$ and $\Sigma$ off $V$; see Figure~\ref{fig:pushingdown}, parts \subref{subfig:setup} and \subref{subfig:pushdown}. This is the procedure called \emph{pushing down}~\cite[\textsection2.5]{FQ}.
 \begin{figure}[h]
\subcaptionbox{A Whitney disc $V$ pairing self-intersections of $f$, together with a point of intersection between $\Sigma$ and $V$ (possible self-intersections of $V$ not pictured).\label{subfig:setup}}[0.3\linewidth]{
 \includegraphics{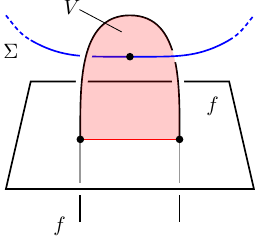}
}
\hfill
\subcaptionbox{{The surface $\Sigma$ after being pushed down. Also pictured, a choice of arc $A$ on $f$ between the new intersection points, and missing the Whitney arcs on $f$.\label{subfig:pushdown}}}[0.3\linewidth]{
\includegraphics{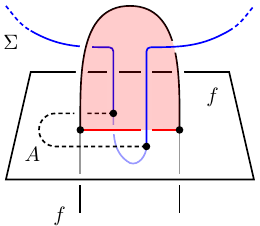}
}
\hfill
\subcaptionbox{{Tubing $\Sigma$ to itself, guided by the arc $A\subseteq f$.\label{subfig:tubing}}}[0.3\linewidth]{
\includegraphics{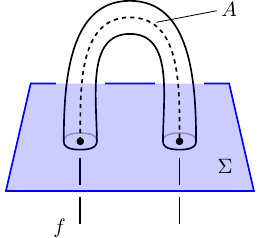}
}
\caption{Pushing intersections between $V$ and $\Sigma$ down, then tubing $\Sigma$ to itself.
}
 \label{fig:pushingdown}
\end{figure}
Each application of pushing down creates a pair of intersection points between~$\Sigma$ and~$f$. For each such pair, choose an arc on $f$ joining the intersections, which is disjoint from the Whitney arcs on $f$; see Figure~\ref{subfig:pushdown}. Using the normal directions to $f$, thicken the arc to a $3$-dimensional 1-handle, and then use the boundary of this $1$-handle to tube $\Sigma$ to itself; see Figure~\ref{subfig:tubing}. This gives a new $\Sigma$, disjoint from the Whitney disc, disjoint from $\nu(f)$, and witnessing a homology between $j_*(\gamma)$ and $k_*(\delta)$ in~$X\sm \nu(f)$. Therefore $(f,\delta)$ is consistent.
\end{proof}

We can finally prove that the map $\psi\colon \Z/n\to \Z/n$ from Diagram~(\ref{eq:diagram}) is always $\pm1$.

\begin{proof}[Proof of Lemma~\ref{lem:regularhomotopy}]
Write $f_0\colon S^2\hookrightarrow X$ and $f_1\colon S^2\hookrightarrow X$ for locally flat embeddings with images $S_0$ and $S_1$ respectively. As both $f_0$ and $f_1$ are embedded, homotopic, and have the same normal Euler number, they are regularly homotopic; see e.g.~\cite[Theorem~2.27]{KPRT}. A regular homotopy consists of a finite sequence of finger and Whitney moves.
Choose a meridional marking $\delta_0$ for $f_0$, represented by an oriented meridian $\mu_0\subseteq \partial\overline{\nu}(f_0)$ that is disjoint from neighbourhoods of all finger move arcs and Whitney discs in the finite sequence. The meridian $\mu_0$ thus survives the regular homotopy and becomes a meridian $\mu_1$ of $f_1$. We denote by $\delta_1$ the corresponding meridional marking for $f_1$. Choose a generator $\gamma\in H_1(\partial X)$ to arrange that $(f_0,\delta_0)$ is consistent. By Proposition~\ref{prop:reg-hom-invariance}, $(f_1,\delta_1)$ is also consistent.

Choose a fixed identification $H_1(L(n,1))\cong\Z/n$, so that $1\in\Z/n$ is represented by an $S^1$-fibre. For $r=0,1$, write $L_r:=\partial\ol{\nu}(f_r)$. By definition of a meridian, $\mu_r$ is the image of an $S^1$-fibre of $L(n,1)$ under some homeomorphisms $g_r \colon L_r\cong L(n,1)$, for $r=0,1$.
Under $(g_r)_*\colon H_1(L_r)\to  H_1(L(n,1))$, we have $(g_r)_*(\delta_r)=\pm1$, where $1\in H_1(L(n,1))$ refers to our fixed identification $H_1(L(n,1))\cong\Z/n$. The sign ambiguity comes from whether or not the homeomorphisms $g_0$ and $g_1$ agree with the orientation on the meridians determined by the orientation on $S^2$ and the ambient manifold $X$.
Consider Diagram~\eqref{eq:diagram}. As both $(f_0,\delta_0)$ and $(f_1,\delta_1)$ are consistent, we have $(j_0)_*(\gamma)=(k_0)_*(\delta_0)$ and $(j_1)_*(\gamma)=(k_1)_*(\delta_1)$. By the commutativity of \eqref{eq:diagram}, this implies $(g_1)_*^{-1}\circ \psi\circ (g_0)_*(\delta_0)=\delta_1$. But as $(g_r)_*(\delta_r)=\pm1$ for $r=0,1$, this implies $\psi(1)=\pm1$.

Since there exists a choice of $g_0, g_1$ such that $\psi(g_0,g_1)= \pm 1$, Corollary~\ref{cor:signindependent} now shows that for any choice, we must have $\psi=\pm1$.
\end{proof}

\section{Homeomorphisms between $2$-sphere exteriors}\label{sec:homeo-between-exteriors}

Recall $W_i:=X\sm \nu S_i$ and $L_i:=\partial\ol{\nu} S_i$. We now prove that there is a homeomorphism between the $2$-sphere exteriors $W_0$ and $W_1$.

\begin{proposition}\label{prop:exists2spherehomeo}
Let $X$ be a compact, oriented $4$-manifold and suppose that $X\simeq S^2$. Suppose for $i=0,1$ that $S_i\subseteq X$ are images of locally flat embeddings of $S^2$ that both represent a given generator of $\pi_2(X)$. Write $W_i:=X\sm \nu S_i$ and assume $\pi_1(W_i)$ is abelian. Then there is a homeomorphism $F\colon W_0\cong W_1$ that restricts to the identity map on $\partial X$ and on $\partial \ol{\nu} S_i \cong L(n,1)$, the latter for some choices of boundary parameterisations.
\end{proposition}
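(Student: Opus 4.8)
The plan is to run Kreck's modified surgery machine (\fullref{thm:kreckmain}) to build an $s$-cobordism rel.\ boundary between $W_0$ and $W_1$, and then apply the topological $s$-cobordism theorem. First I would assemble the normal $B$-structures. By \fullref{prop:2typeforexterior}, $(B(n),p_n)$ is a normal $2$-type for each of $W_0$ and $W_1$, with base $B(n)=P(n)\times\BTOPSpin$ where $P(n)=B(\Z/n)$ for $n\neq 0$ and $P(0)=S^1\times\CP^\infty$. Apply \fullref{lem:ellcompatible} to fix disc bundle structures $G_i\colon\ol\nu S_i\cong D^2\wt\times_n S^2$, boundary parameterisations $g_i\colon L_i\cong L(n,1)$, and a map $W(g_0,g_1)\to P(n)$ (for $n=0$, the pair $(\ell,\eta)$) that restricts on each $W_i$ to a $3$-connected map $W_i\to P(n)$ (over $P(n)$, $3$-connectedness is a $\pi_1$-isomorphism for $n\neq 0$ and a $\pi_1$- and $\pi_2$-isomorphism for $n=0$, using \fullref{lem:lenshtpy}). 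To promote this to a normal $B(n)$-structure I must add a spin structure: $W_i$ is spin by \fullref{lem:Wspin}, with a unique spin structure when $n$ is odd (as $H^1(W_i;\Z/2)=0$), while when $n$ is even or zero $X$ is spin and simply connected, so $W_i$ and $D^2\wt\times_n S^2$ inherit canonical spin structures, and since $D^2\wt\times_n S^2$ is simply connected the $G_i$ carry these to the \emph{same} spin structure on $\partial X\sqcup L(n,1)$. Thus $W(g_0,g_1)$ acquires a normal $B(n)$-structure $\Xi$ whose restrictions $\ol\nu_i$ to $W_i$ are $3$-connected, i.e.\ are normal $2$-smoothings rel.\ boundary with respect to $(\partial X\sqcup L(n,1),\zeta)$, $\zeta:=\Xi|_{\partial W_0}$.

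Next, \fullref{lem:sanitycheck} says $(W_0,\ol\nu_0)$ and $(W_1,\ol\nu_1)$ are normally $B(n)$-bordant rel.\ boundary if and only if $[W(g_0,g_1)]=0\in\Omega_4(B(n),p_n)\cong\Omega_4^{\TOPSpin}(P(n))$. I would compute this group via the Atiyah--Hirzebruch spectral sequence, using \fullref{cor:themeat} to evaluate the $d_2$ differentials as (duals of) $\Sq^2$. For $n\neq 0$ these Steenrod-square differentials annihilate the torsion classes $H_2(B(\Z/n);\Z/2)$ and $H_3(B(\Z/n);\Z/2)$ in total degree $4$, leaving $\Omega_4^{\TOPSpin}(B(\Z/n))\cong\Z$ detected by $\sigma/8$; since $W(g_0,g_1)=W_0\cup-W_1$ is glued from two $\Q$-homology bordisms (\fullref{lem:lens}), Novikov additivity gives $\sigma(W(g_0,g_1))=\sigma(W_0)-\sigma(W_1)=0$, so the class vanishes. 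For $n=0$ the group $\Omega_4^{\TOPSpin}(S^1\times\CP^\infty)$ additionally carries an edge-homomorphism invariant in $H_4(S^1\times\CP^\infty;\Z)\cong\Z$, the characteristic number $\langle(\eta^*x)^2,[W(g_0,g_1)]\rangle$ with $x\in H^2(\CP^\infty;\Z)$ a generator; I would check this vanishes by a direct homological computation, using that each $\eta|_{W_i}$ is an $H^2$-isomorphism with matching restriction to $\partial W_i$ and that the intersection form of $W(g_0,g_1)$ is even, unimodular, of rank $2$ and signature $0$, hence hyperbolic. Either way $[W(g_0,g_1)]=0$, so there is a normal $B(n)$-bordism rel.\ boundary $(Z,G_0,G_1,\Xi_Z)$ between the two normal $2$-smoothings.

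Finally, \fullref{thm:kreckmain} attaches to $(Z,\Xi_Z)$ a surgery obstruction $\theta(Z,\Xi_Z)\in L^s_5(\Z[\pi])$ with $\pi=\pi_1(W_i)\in\{\Z/n,\Z\}$. Changing the choice of bordism $Z$ alters $\theta$ by the image of the closed-manifold map $\Omega_5^{\TOPSpin}(P(n))\to L^s_5(\Z[\pi])$; using the Shaneson splitting for $L^s_5(\Z[\Z])$ together with realisation by $S^1\times(\text{the }E_8\text{-manifold})$ when $n=0$, and the structure of $L^s_5(\Z[\Z/n])$ when $n\neq 0$, one arranges $\theta(Z,\Xi_Z)=0$. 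Then $Z$ is normally $B(n)$-bordant rel.\ boundary to an $s$-cobordism rel.\ boundary $Z'$ between $W_0$ and $W_1$. As $\pi$ is finite or infinite cyclic, it is a good group, so the topological $s$-cobordism theorem of Freedman--Quinn (rel.\ boundary version) gives $Z'\cong W_0\times[0,1]$ restricting to the identity on $(\partial X\sqcup L(n,1))\times[0,1]$. Restricting to $W_0\times\{1\}$ yields the desired homeomorphism $F\colon W_0\cong W_1$, the identity on $\partial X$ and on $\partial\ol\nu S_i\cong L(n,1)$ via the $g_i$.

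The main obstacle is showing $[W(g_0,g_1)]=0\in\Omega_4^{\TOPSpin}(P(n))$ and then disposing of the $L^s_5$-surgery obstruction: the spectral sequence computation with the $\Sq^2$-differentials, the identification and vanishing of the resulting invariants of $W(g_0,g_1)$ (especially the $H_4$-valued one when $n=0$), and the verification that the $L^s_5$-obstruction can be killed for the relevant fundamental groups, together account for the substance of the argument; the set-up of compatible normal $2$-smoothings rel.\ boundary and the final appeal to the $s$-cobordism theorem are comparatively routine.
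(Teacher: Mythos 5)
Your proposal follows essentially the same blueprint as the paper: assemble compatible normal $2$-smoothings on $W_0$ and $W_1$, compute $\Omega_4^{\TOPSpin}(P(n))$ via the Atiyah--Hirzebruch spectral sequence with the $\Sq^2$-differentials of Proposition~\ref{cor:themeat}, show $[W(g_0,g_1)]$ vanishes there, invoke \fullref{lem:sanitycheck} and \fullref{thm:kreckmain}, kill the $L_5^s$-obstruction, and finish with Freedman--Quinn. Your treatment of the spin structures, of the $n\neq 0$ signature computation (Novikov additivity rather than the paper's Mayer--Vietoris argument that $H_2(W)$ is torsion -- either works), and of the $L_5^s$-step match the paper.

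However, there is a genuine gap in your handling of the $n=0$ case. You claim the characteristic number $\langle(\eta^*x)^2,[W(g_0,g_1)]\rangle$ ``vanishes by a direct homological computation,'' citing hyperbolicity of the intersection form and the $H^2$-isomorphism conditions on $\eta|_{W_i}$. This is false as stated: the vanishing is \emph{not} automatic for a given $\eta$. Concretely, take the basis $[S],[T]$ for $H_2(W;\Z)$ as in \fullref{lem:CPinftycompatible}, with intersection matrix $\bigl(\begin{smallmatrix}0&1\\1&a\end{smallmatrix}\bigr)$, $a$ even. The $\pi_2$-isomorphism condition on each $W_i$ forces $\operatorname{PD}(\eta^*x)=u[S]+v[T]$ with $v=\pm1$, but places \emph{no} constraint on $u$, since $[S]$ restricts trivially to $H^2(W_i;\Z)$. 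The self-intersection is $\pm(2u+av)$, which is even but generally nonzero. In other words, hyperbolicity of the form does not make a particular primitive class isotropic. The correct statement is that the characteristic number is an \emph{obstruction} that one must \emph{arrange} to vanish by modifying $\eta$ without changing its restriction to $W_0$ and $W_1$ up to homotopy; this is exactly the content of the paper's \fullref{lem:CPinftycompatible}, which replaces $\eta$ by a map whose pullback of $x$ differs by a multiple of $\operatorname{PD}([S])$. The paper flags this explicitly in the remark after \fullref{lem:ellcompatible}. Your proof needs this modification step; as written, the claimed vanishing would fail for a generic choice of $\eta$.
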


We will build up to the proof of Proposition~\ref{prop:exists2spherehomeo} with some lemmas. Recall, given choices of boundary parametrisations $g_i\colon L_i\xrightarrow{\cong}L(n,1)$, we defined $W(g_0,g_1)=W_0\cup-W_1$, glued using the $g_i$ on the $L_i$ boundary components and by the identity on $\partial X$; see Definition~\ref{def:glueup}.

\begin{lemma}\label{lem:spincompatible}
Under the hypotheses of Proposition~\ref{prop:exists2spherehomeo}, and for any two choices of $2$-disc bundle structure on the tubular neighbourhoods $G_i\colon \ol{\nu} S_i\cong D^2\widetilde{\times}_nS^2 $, that restrict to boundary parameterisations $g_i\colon L_i\cong L(n,1)$, there is a spin structure on $W(g_0,g_1)$.
\end{lemma}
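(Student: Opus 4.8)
The plan is to produce the desired spin structure on the closed $4$-manifold $W(g_0,g_1)=W_0\cup -W_1$ by gluing together spin structures on the two halves. Recall $W_0$ and $W_1$ are spin by Lemma~\ref{lem:Wspin}. Since $W(g_0,g_1)=W_0\cup -W_1$ is a decomposition along the whole boundary, a spin structure on $W(g_0,g_1)$ is the same as a choice of spin structure $\mathfrak{s}_i$ on $W_i$ for $i=0,1$ whose boundary restrictions agree under the gluing homeomorphism $\Id_{\partial X}\sqcup(g_1^{-1}\circ g_0)$; concretely, $\mathfrak{s}_0|_{\partial X}=\mathfrak{s}_1|_{\partial X}$ as spin structures on $\partial X$, and $(g_0)_*(\mathfrak{s}_0|_{L_0})=(g_1)_*(\mathfrak{s}_1|_{L_1})$ as spin structures on $L(n,1)$. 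So the task reduces to producing such a compatible pair, and I would split into the cases $n$ odd and $n$ even.

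If $n$ is odd, then by Lemma~\ref{lem:lenshtpy} and Proposition~\ref{prop:immersionhomology} we have $\pi_1(W_i)\cong\Z/n$ and $H_1(\partial X)\cong\Z/n$, so $H^1(W_i;\Z/2)$, $H^1(\partial X;\Z/2)$ and $H^1(L(n,1);\Z/2)\cong\Hom(\Z/n,\Z/2)$ all vanish. Hence $W_i$, $\partial X$ and $L(n,1)$ each carry a unique spin structure, the $\mathfrak{s}_i$ are forced, and both compatibility conditions hold vacuously; the $\mathfrak{s}_i$ then glue to a spin structure on $W(g_0,g_1)$.

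If $n$ is even, then for a generator $x\in H_2(X)$ we have $\langle w_2(X),x\rangle\equiv\lambda_X(x,x)=n\equiv 0\pmod 2$, and since $H^2(X;\Z/2)\cong\Z/2$ is generated by the Poincar\'e dual of $x$ this forces $w_2(X)=0$, so $X$ is spin. I would fix a spin structure $\sigma$ on $X$ and set $\mathfrak{s}_i:=\sigma|_{W_i}$. The $\partial X$-condition is then immediate, as both sides equal $\sigma|_{\partial X}$. For the $L(n,1)$-condition, note $\mathfrak{s}_i|_{L_i}=\sigma|_{L_i}=(\sigma|_{\ol{\nu}S_i})|_{L_i}$. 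Now $\ol{\nu}S_i$, being a codimension-$0$ submanifold of the spin manifold $X$, is spin, and transporting $\sigma|_{\ol{\nu}S_i}$ along $G_i\colon\ol{\nu}S_i\cong D^2\wt{\times}_nS^2$ exhibits $D^2\wt{\times}_nS^2$ as spin; since $D^2\wt{\times}_nS^2$ deformation retracts onto $S^2$ it has $H^1(D^2\wt{\times}_nS^2;\Z/2)=0$, so its spin structure is unique, and hence $(G_i)_*(\sigma|_{\ol{\nu}S_i})$ is this one spin structure for both $i=0,1$. Restricting it to the boundary $L(n,1)=\partial(D^2\wt{\times}_nS^2)$ and using that $g_i$ is the restriction of $G_i$, we get that $(g_i)_*(\mathfrak{s}_i|_{L_i})$ is one and the same spin structure on $L(n,1)$ for $i=0,1$. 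So the two restrictions agree, and the $\mathfrak{s}_i$ glue to the required spin structure on $W(g_0,g_1)$.

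The step I expect to need the most care is exactly the matching on the lens-space boundary: the two disc-bundle parameterisations $G_0,G_1$ are unrelated a priori, so one cannot directly compare $\mathfrak{s}_0|_{L_0}$ and $\mathfrak{s}_1|_{L_1}$. The resolution is that for $n$ odd there is no room for a discrepancy, while for $n$ even the discrepancy is killed by routing both restrictions through the global spin structure on $X$ and the uniqueness of the spin structure on the disc bundle $D^2\wt{\times}_nS^2$, whose boundary is the fixed copy of $L(n,1)$.
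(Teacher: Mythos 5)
Your proof is correct and follows essentially the same approach as the paper's: the same split into $n$ odd and $n$ even, uniqueness of spin structures on all relevant spaces for $n$ odd, and, for $n$ even, restricting the unique spin structure on $X$ and routing the matching on $L(n,1)$ through the disc-bundle parameterisations $G_i$ and the uniqueness of the spin structure on $D^2\wt{\times}_nS^2$ (the paper phrases this as ``exactly one spin structure on $L(n,1)$ extends over the disc bundle,'' which is the same fact).
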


\begin{proof}
When $n$ is odd, we compute $H^1(W_i;\Z/2)=0$, so that $W_i$ has a unique spin structure~$\mathfrak{s}_i$. Similarly, $\partial X$ and $L_i$ have unique spin structures because $W_i$ is a $\Z$-homology cobordism. Thus the restrictions of $\mathfrak{s}_0$ and $\mathfrak{s}_1$ to $\partial X$ agree, and so do the restrictions of $\mathfrak{s}_0$ and $\mathfrak{s}_1$ to $L_i$. This implies the chosen spin structures are compatible with the boundary glueing maps defining~$W(g_0,g_1)$.

When $n$ is even, recall that $X \simeq S^2$ is spin and has a unique spin structure. For $i=0,1$, endow $W_i$ with the spin structure $\mathfrak{s}_i$ restricted from the spin structure on $X$. On $L_i$, the restricted spin structure $\partial \mathfrak{s}_i$ is one of two possible spin structures on $L_i$. But exactly one of the spin structures on $L(n,1)$ extends to $D^2\widetilde{\times}_n S^2$. As the boundary parametrisation $L_i\cong L(n,1)$ extends to $\ol{\nu} S_i\cong D^2\widetilde{\times}_n S^2$, we see that for both $i=0,1$, the spin structure $\partial \mathfrak{s}_i\circ g_i^{-1}$ is this bounding spin structure on $L(n,1)$. Thus the spin structures $\partial \mathfrak{s}_0$ and $\partial\mathfrak{s}_1$ agree on $L_0$ and $L_1$ under the glueing map defining $W(g_0,g_1)$.
\end{proof}

\begin{lemma}\label{lem:CPinftycompatible}
Suppose the hypotheses of Proposition~\ref{prop:exists2spherehomeo}, and assume furthermore that $n=0$. Choose boundary parameterisations $g_i\colon L_i\cong S^1\times S^2$. Then $\sigma(W(g_0,g_1))=0$. Moreover, there exists a map $\eta\colon W(g_0,g_1)\to \CP^\infty$ so that the restriction $h_i:=\eta|_{W_i}$ is an isomorphism on $\pi_2$, and such that $\eta_*([W(g_0,g_1)])\in H_4(\CP^\infty;\Z)$ vanishes.
\end{lemma}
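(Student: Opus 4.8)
The argument splits into an easy signature computation and a more delicate correction of the map $\eta$ supplied by Lemma~\ref{lem:ellcompatible}. Write $W := W(g_0,g_1)$. For the signature, apply Novikov additivity: to the closed manifold $W = W_0 \cup_\partial (-W_1)$ it gives $\sigma(W) = \sigma(W_0) + \sigma(-W_1) = \sigma(W_0) - \sigma(W_1)$, and to each decomposition $X = W_i \cup_{L_i} \overline{\nu}S_i$ it gives $\sigma(W_i) = \sigma(X) - \sigma(D^2\widetilde{\times}_0 S^2)$; the right-hand side does not depend on $i$, so $\sigma(W_0) = \sigma(W_1)$ and therefore $\sigma(W) = 0$. (One can also observe directly that $\sigma(W_i) = 0$, since $W_i$ is a rational homology cobordism between closed $3$-manifolds by Lemma~\ref{lem:lens}.)

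Now invoke Lemma~\ref{lem:ellcompatible} to fix disc bundle structures $G_i$, hence parameterisations $g_i$, together with a map $\eta_0 \colon W \to \CP^\infty$ restricting to a $\pi_2$-isomorphism on each $W_i$. Let $x \in H^2(\CP^\infty;\Z) \cong \Z$ be the generator and put $c_0 := \eta_0^*(x) \in H^2(W;\Z) \cong [W,\CP^\infty]$; by the Claim in the proof of Lemma~\ref{lem:ellcompatible}, $c_0|_{W_i}$ generates $H^2(W_i;\Z) \cong \Z$ for $i=0,1$. The invariant to be killed is $N := \eta_{0*}[W] = \langle c_0 \cup c_0, [W]\rangle \in H_4(\CP^\infty;\Z) \cong \Z$. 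Because $W$ carries a spin structure (Lemma~\ref{lem:spincompatible}), its intersection form is even, so $N$ is even; this parity is the key point.

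Next I would produce the correction class. Using the parameterisation $g_1 \colon L_1 \cong S^1 \times S^2$, let $\Sigma \subseteq \operatorname{int}(W_1) \subseteq W$ be an embedded $2$-sphere obtained by pushing a fibre $\{\pt\} \times S^2$ of $L_1$ off the boundary into the interior along a collar, and set $\epsilon_0 := \PD_W([\Sigma]) \in H^2(W;\Z)$. The three properties to check are: (i) $\epsilon_0|_{W_i} = 0$ for $i = 0,1$; (ii) $\langle \epsilon_0 \cup \epsilon_0, [W]\rangle = [\Sigma] \cdot_W [\Sigma] = 0$; and (iii) $\langle c_0 \cup \epsilon_0, [W]\rangle = \langle c_0, [\Sigma]\rangle = \pm 1$. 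For (i) and (ii): $\Sigma$ is disjoint from $W_0$, and a parallel pushed-in copy of the same fibre is disjoint from $\Sigma$; since $H_2(W_1;\Z) \cong \Z$ is generated by the image of the sphere-fibre class of $L_1$ (because $W_1$ is a homology cobordism from $\partial X$ to $L_1$) and $H^2(W_i;\Z)$ is torsion-free (since $H_1(W_i;\Z) \cong \Z$), computing all the relevant intersection numbers with these disjoint representatives yields zero. For (iii): $[\Sigma]$ is the image in $H_2(W;\Z)$ of a generator of $H_2(W_1;\Z)$, so $\langle c_0, [\Sigma]\rangle = \langle c_0|_{L_1}, [\{\pt\}\times S^2]\rangle$; since the restriction $H^2(W_1;\Z) \to H^2(L_1;\Z)$ is an isomorphism and $H^2(L_1;\Z) \cong \Hom(H_2(L_1;\Z),\Z)$, the class $c_0|_{L_1}$ is a generator and its pairing with the generator $[\{\pt\}\times S^2]$ of $H_2(L_1;\Z)$ is $\pm 1$.

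Finally, for $k \in \Z$ let $\eta$ be the map corresponding to $c_0 + k\epsilon_0$ under $[W,\CP^\infty] \cong H^2(W;\Z)$. By (i), $\eta|_{W_i}$ is homotopic to $\eta_0|_{W_i}$ and hence still a $\pi_2$-isomorphism; by (ii) and (iii), $\eta_*[W] = \langle (c_0 + k\epsilon_0) \cup (c_0 + k\epsilon_0), [W]\rangle = N \pm 2k$. Since $N$ is even, choosing $k$ appropriately gives $\eta_*[W] = 0$, completing the proof. The main obstacle is pinning down a correction class that is trivial on each $W_i$ (so that the $\pi_2$-isomorphisms survive) yet pairs to an \emph{odd} value with $c_0$ against $[W]$, so that together with the evenness of $N$ it absorbs the obstruction; the pushed-in sphere fibre of $L_1 \cong S^1 \times S^2$ is the natural choice, and step (iii) is where the $n = 0$ hypothesis is genuinely used.
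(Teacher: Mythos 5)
Your argument is correct, and it differs from the paper's proof in a few worthwhile ways. For the signature, you use Novikov additivity (or the rational homology cobordism observation), whereas the paper argues via the Mayer--Vietoris sequence for $W$ that the image of $H_2(S^1\times S^2)$ gives a metabolizer for $\lambda_W$; both are fine, and your route is more direct. For the modification of $\eta$, you and the paper use exactly the same correction class --- the Poincar\'{e} dual of the pushed-in sphere fibre $[\Sigma]=[S]$ --- but the paper first computes $H_2(W;\Z)\cong\Z\oplus\Z$ with a basis $\{[S],[T]\}$ ($T$ a torus built from two annuli), writes down the intersection matrix $\bigl(\begin{smallmatrix}0&1\\1&a\end{smallmatrix}\bigr)$ with $a$ even, and solves for the coefficient by linear algebra. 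You bypass the torus $T$ and the explicit intersection form entirely: your three properties (i)--(iii) of $\epsilon_0$ together with the parity observation that $N$ is even (because $W$ is spin, i.e.\ Lemma~\ref{lem:spincompatible}) give a one-parameter family $c_0+k\epsilon_0$ with $\eta_*[W]=N\pm 2k$, and evenness lets you solve. This is a genuinely lighter argument. One thing worth flagging: the paper later (in the proof of Proposition~\ref{prop:exists2spherehomeo}) uses the fact that $\eta_*[W]$ can be arranged to take any even value, which it extracts from the explicit intersection form; your argument also delivers this, since $N\pm 2k$ ranges over all even integers as $k$ varies, so no information is lost. One small caveat on precision: in your (iii) you should note that the isomorphism $H^2(W_1;\Z)\to H^2(L_1;\Z)$ used there is exactly the isomorphism the paper records inside the proof of Lemma~\ref{lem:ellcompatible} (coming from the homology cobordism), but you do cite that fact, so this is fine.
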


\begin{proof}
Fix choices of boundary parametrisation $g_i\colon L_i\cong S^1\times S^2$ for $i=0,1$. In this proof, write $W:=W(g_0,g_1)$, for brevity.

We compute the signature of $W$. As $n=0$, the original manifold $X$ has vanishing intersection form and thus signature 0. Similarly $\overline{\nu}S_i$ has signature~0. Since $X=W_i\cup\overline{\nu}S_i$, Novikov additivity now shows~$W_i$ has signature 0. A further application of Novikov additivity implies~$W$ has signature~0.

Now let $\eta\colon W(g_0,g_1)\to \CP^\infty$ be a map as produced by Lemma~\ref{lem:ellcompatible}.  We wish to describe how to modify $\eta$ so that $\eta_*([W])\in H_4(\CP^\infty;\Z)$ vanishes, but first we recast the condition in a more geometric way. As $H_4(\CP^\infty;\Z)$ is free, we have $\eta_*([W])=0$ if and only if $f(\eta_*([W]))=0$ for all homomorphisms $f\colon H_4(\CP^\infty;\Z)\to \Z$. By the universal coefficient theorem the evaluation map $H^4(\CP^\infty;\Z)\cong\Hom_{\Z}(H_4(\CP^\infty;\Z), \Z)$ is an isomorphism. So furthermore $\eta_*([W])=0$ if and only $\langle \psi, H_*([W])\rangle=0$ for all $\psi\in H^4(\CP^\infty;\Z)$. As there is an isomorphism of graded rings $H^*(\CP^\infty;\Z)\cong \Z[x]$ where $|x|=2$, this latter condition is equivalent to checking for the generator $x$ that $0=\langle x\cup x, \eta_*([W])\rangle=\langle \eta^*(x)\cup \eta^*(x), [W]\rangle$. So finally, $\eta_*([W])\in H_4(\CP^\infty;\Z)$ vanishes if and only if the Poincar\'{e} dual to $\eta^*(x)$ has vanishing self-intersection.

Now we describe the Poincar\'{e} dual to $\eta^*(x)$ and then show our choice of $\eta$ can be modified to a new map with the same properties but moreover so that this Poincar\'{e} dual has vanishing self-intersection.

We wish to obtain specific generators for $H_2(W)$. For this, consider the long exact sequence
\[
\begin{tikzcd}[row sep=0.5em]
\underbrace{H_2(\partial X)}_{\cong\Z}\oplus \underbrace{H_2(S^1\times S^2)}_{\cong\Z}\ar[r, "\alpha"] &\underbrace{H_2(W_0)}_{\cong\Z}\oplus \underbrace{H_2(W_1)}_{\cong\Z}\ar[r, "\beta"]&\phantom{.} \\
H_2(W)\ar[r, "\gamma"] & \underbrace{H_1(\partial X)}_{\cong\Z}\oplus \underbrace{H_1(S^1\times S^2)}_{\cong\Z}\ar[r, "\delta"]&\underbrace{H_1(W_0)}_{\cong\Z}\oplus \underbrace{H_1(W_1)}_{\cong\Z}
\end{tikzcd}
\]
where $\alpha$ and $\delta$ are both $\left(\begin{smallmatrix}1&1\\1&1\end{smallmatrix}\right)$. Write $L=\coker(\alpha)$ and $J=\ker(\delta)$, which we note are both free of rank 1. There is then an exact sequence
\begin{equation}\label{eq:reminder}
\begin{tikzcd}
0\ar[r] & L\ar[r, "\beta"] & H_2(W)\ar[r, "\gamma"]& J\ar[r] &0.
\end{tikzcd}
\end{equation}
From this we deduce that $H_2(W)\cong\Z\oplus \Z$. Now $L$ is generated by the class of a generator of $H_2(W_0)$, which is the image of a generator of $H_2(S^1\times S^2)$, say.

Choose a loop $\lambda \subseteq \partial X$ generating $H_1(\partial X;\Z)$. For each of $i=0,1$ there exists a properly embedded annulus $A_i\subseteq W_i$ with $\lambda=A_i\cap \partial X$ and $g_i^{-1}(S^1\times\{\pt\})=A_i\cap L_i$. Fix choices of $A_i$. Glueing these annuli together, we obtain an embedded torus $T\subseteq W$ such that $\gamma([T])$ is a generator for $J$ in sequence (\ref{eq:reminder}). Write $S\subseteq W$ for the $2$-sphere $\{\pt\}\times S^2\subseteq S^1\times S^2\subseteq W$. Then the intersection form for $W$ is
\[
(H_2(W;\Z),\lambda_{X,\Z})=\left(\Z\langle [S], [T]\rangle\,\,,\, \begin{pmatrix} 0 & 1 \\ 1 & a\end{pmatrix}\right),
\]
where $a\in 2\Z$ is some unknown integer (which is necessarily even because $W$ is spin).

Consider for $i=0,1$ the commutative diagram
\[
\begin{tikzcd}
{[W,\CP^\infty]}\ar[r,"\cong"]\ar[d]
& H^2(W;\Z) \ar[r,"\cong"]\ar[d]
& \Hom_\Z(H_2(W;\Z),\Z)\ar[d]
\\
{[W_i,\CP^\infty]}\ar[r,"\cong"]
& H^2(W_i;\Z) \ar[r,"\cong"]
& \Hom_\Z(H_2(W_i;\Z),\Z)
\end{tikzcd}
\]
The Poincar\'{e} dual to $[S]\in H_2(W;\Z)$ is some class $y\in H^2(W;\Z)$, and as an element of $\Hom_\Z(H_2(W;\Z);\Z)$ is given by the left column of the intersection form $\lambda_{X,\Z}$. In particular, this map evaluates to 0 on $[S]$. As $[S]\in H_2(W_i;\Z)$ generates, and the diagram commutes, we see that $y$ maps to 0 under the vertical morphism. Write $u[S]+v[T]$ for the Poincar\'{e} dual class to $\eta^*(x)$. Let $b := a/2$ and let $G\colon W\to \CP^\infty$ be such that $G^*(x)=\eta^*(x)-(u+ bv)y$. Because~$y$ maps vertically to 0, the restrictions of $G$ and $\eta$ to $W_i$ are homotopic, for each $i=0,1$. In particular, $G$ restricted to $W_i$ is a $\pi_2$-isomorphism. The Poincar\'{e} dual to $G^*(x)$ is $u[S]+v[T]-(u+bv)[S]=-bv[S]+v[T]$, which has self-intersection:
\[\begin{pmatrix} -bv & v
\end{pmatrix}\begin{pmatrix} 0 & 1 \\ 1 & a
\end{pmatrix}\begin{pmatrix} -bv \\ v
\end{pmatrix} = -bv^2 - bv^2 + av^2 =0. \]
Thus $G$ is the replacement for $\eta$ we seek.
\end{proof}

Now we complete the proof of Proposition~\ref{prop:exists2spherehomeo}. The proofs in the cases that $n\neq 0$ and $n=0$ are structurally similar, but differ in the details. We will thus split the proof into these two cases for ease of digestion.

\begin{proof}[Proof of Proposition~\ref{prop:exists2spherehomeo}, assuming $n\neq 0$] By Proposition~\ref{prop:2typeforexterior}, the normal $2$-type of $W_i$ is $B(n)=\BTOPSpin\times B(\Z/n)\to \BSTOP$. A normal $2$-smoothing for $W_i$ consists of a choice of spin structure $\mathfrak{s}_i\colon W_i\to \BTOPSpin$, together with a choice of map $\ell_i\colon W_i\to B(\Z/n)$ that induces an isomorphism on $\pi_1$. By Lemma~\ref{lem:ellcompatible} and Lemma~\ref{lem:spincompatible} there exist choices of boundary parametrisation $g_i\colon L_i\cong L(n,1)$ and normal $2$-smoothings $\overline{\nu}_{W_i}=\mathfrak{s}_i\times \ell_i$ such that the normal $2$-smoothing
\[
\overline{\nu}_{W_0}\sqcup \overline{\nu}_{W_1}\colon W_0\sqcup -W_1\to B(n)= \BTOPSpin\times B(\Z/n)
\]
descends to a normal $B(n)$-structure on the closed $4$-manifold $W(g_0,g_1)$, which we will denote $\overline{\nu}_{W_0}\cup \overline{\nu}_{W_1}$. For the remainder of the proof, we abbreviate to $W:=W(g_0,g_1)$.

We wish to show that the normal $B(n)$-structure $(W, \overline{\nu}_{W_0}\cup \overline{\nu}_{W_1})$ vanishes in the group $\Omega_4(B(n),p_n)$. To prove this claim, we first compute the bordism group. Because $B(n)$ is a product, we have $\Omega_4(B(n),p_n)\cong\Omega^{\TOPSpin}_4(B(\Z/n))$. We claim that
\[
\Omega_4^{\TOPSpin}(B(\Z/n)) \cong \Omega_4^{\TOPSpin}\cong \Z.
\]
The second isomorphism is well known, and given by the signature of the $4$-manifold divided by~8, so we focus on the first. We use the Atiyah-Hirzebruch spectral sequence, which has $E_2^{r,s}\cong H_r(B(\Z/n);\Omega_s^{\TOPSpin})$. Recall that $\Omega_s^{\TOPSpin}\cong \Z,\Z/2,\Z/2,0,\Z$ for $s=0,1,2,3,4$ ($s$-manifolds for $s\leq 3$ admit unique smooth structures, so here we may use the smooth spin bordism groups, and for $s=4$, the reader is referred to~\cite[\textsection 13]{MR0423356}). A cellular chain complex for $B(\Z/n)$ is given by
\[
\dots\to\Z\xrightarrow{0}\Z\xrightarrow{n}\Z\xrightarrow{0}\Z\xrightarrow{n}\Z\xrightarrow{0}\Z,\]
and from this we can fill in some relevant terms of the $E^2$ page, focussing on when $r+s=4,5$. First note that $E_2^{0,4}\cong\Omega_4^{\TOPSpin}\cong \Z$. For $n$ odd, this is the only nonvanishing upper right quadrant entry on the diagonal line $r+s=4$.
All the  differentials $d_m$, for $m \geq 2$, with codomain $E_2^{0,4}$, have finite groups in their domains. Thus this terms survives to the $E_{\infty}$ page and so $\Omega_4^{\TOPSpin}(B(\Z/n)) \cong \Omega_4^{\TOPSpin}\cong \Z$ for $n$ odd.
For $n$ even, we can fill in relevant entries in the $E_2$ page as follows.

\begin{center}
\begin{tikzpicture}

\draw[step=1.0,black,xshift=1cm,yshift=1cm] (0,0) grid (6.5,5.5);

\node at (0.5,1.5) {$0$};
\node at (0.5,2.5) {$1$};
\node at (0.5,3.5) {$2$};
\node at (0.5,4.5) {$3$};
\node at (0.5,5.5) {$4$};
\node at (0.5,6.5) {$q$};

\node at (1.5,0.5) {$0$};
\node at (2.5,0.5) {$1$};
\node at (3.5,0.5) {$2$};
\node at (4.5,0.5) {$3$};
\node at (5.5,0.5) {$4$};
\node at (6.5,0.5) {$5$};
\node at (7.5,0.5) {$p$};

\node at (5.5,1.5) {$0$};
\node at (6.5,1.5) {$\Z/n$};

\node at (5.5,2.5) {$\Z/2$};
\node at (4.5,2.5) {$\Z/2$};

\node at (3.5,3.5) {$\Z/2$};
\node at (4.5,3.5) {$\Z/2$};

\node at (6.5,4.5) {$0$};
\node at (5.5,4.5) {$0$};
\node at (4.5,4.5) {$0$};
\node at (3.5,4.5) {$0$};
\node at (2.5,4.5) {$0$};
\node at (1.5,4.5) {$0$};

\node at (1.5,5.5) {$\Z$};

\node at (11,5) {$E_2^{r,s}\cong H_r(B(\Z/n);\Omega_s^{\TOPSpin})$};

\end{tikzpicture}
\end{center}

The differentials $d_2 \colon E_2^{5,0} \to E_2^{3,1}$ and $d_2 \colon E_2^{4,1} \to E_2^{2,2}$ are given by Proposition~\ref{cor:themeat}. Moreover, the former is known to be the surjective map $\Z/n \to \Z/2$, and the latter is known to be an isomorphism $\Z/2 \to \Z/2$ \cite[\textsection 6.2.1]{KPT}.  So on the $E_3$ page, the only nonzero term when $r+s=4$ is $E_3^{0,4}$, which is isomorphic to $\Z$ since $E_2^{2,3}=0$. Moreover, all the  differentials $d_m$, for $m \geq 3$, with codomain $E_m^{0,4}$, have finite groups in their domains, and therefore inductively we see that they map trivially to $E_r^{0,4} \cong \Z$. Thus this term survives to the $E_\infty$ page, and we have  $\Omega_4^{\TOPSpin}(B(\Z/n)) \cong \Omega_4^{\TOPSpin}\cong \Z$ for $n$ even as well.

To compute the signature of $W$ consider the section of Mayer-Vietoris sequence
\[
\underbrace{H_2(W_0)}_{=0}\oplus \underbrace{H_2(W_1)}_{=0}\to H_2(W)\to \underbrace{H_1(\partial X)}_{\cong\Z/n}\oplus \underbrace{H_1(L(n,1))}_{\cong\Z/n}
\]
implies that $H_2(W)$ is $\Z$-torsion. Hence $\sigma(W)=0$ and so $(W, \overline{\nu}_{W_0}\cup \overline{\nu}_{W_1})$ vanishes in the group $\Omega_4(B(n),p_n)$. By Lemma~\ref{lem:sanitycheck}, this implies that $(W_0,\overline{\nu}_{W_0})$ is cobordant to $(W_1,\overline{\nu}_{W_1})$ rel.~boundary. By Theorem~\ref{thm:kreckmain}, such a bordism determines a surgery obstruction in
$\displaystyle{L^{s,\tau}_5(\Z[\Z/n])}$, which vanishes only if $(W_0,\overline{\nu}_{W_0})$ is $s$-cobordant to~$(W_1,\overline{\nu}_{W_1})$ rel.~boundary.

We claim that $\displaystyle{L^{s,\tau}_5(\Z[\Z/n])}=0$. For this, by \cite[Theorem~7]{MR470029} 
 the simple Wall group is trivial $\displaystyle{L_5^{s}(\Z[\Z/n])}=0$. Hence it is sufficient to show $\displaystyle{L^{s,\tau}_5(\Z[\Z/n])}\cong \displaystyle{L^{s}_5(\Z[\Z/n])}$, which will follow if the hypotheses of Proposition~\ref{prop:fiddle} are satisfied. These hypotheses hold: firstly, Oliver~\cite[Theorem 5.6]{MR933091} showed that $\Wh(\Z/n)$ is torsion-free; secondly, the involution on~$\Wh(\pi)$ is trivial~\cite[Proposition~4.2]{MR347999} (see also~\cite{MR451249}) for any finite abelian group, so in particular for $\pi\cong \Z/n$. This completes the proof of the claim.

As the value group for the surgery obstruction vanishes $\displaystyle{L^{s,\tau}_5(\Z[\Z/n])}=0$, the desired $s$-cobordism exists.
As $\Z/n$ is a good group, the Freedman-Quinn $s$-cobordism theorem~\cite[Theorem~7.1A]{FQ} implies there exists a homeomorphism $F\colon W_0\cong W_1$ restricting to the identity on $\partial X$ and to $g_1^{-1}\circ g_0\colon L_0\to L_1$ on this boundary component.
\end{proof}

\begin{proof}[Proof of Proposition~\ref{prop:exists2spherehomeo}, assuming $n= 0$]
By Proposition~\ref{prop:2typeforexterior}, the normal $2$-type of $W_i$ is $B(0)=\BTOPSpin\times S^1\times \CP^\infty\to \BSTOP$. A normal $2$-smoothing for $W_i$ consists of a choice of spin structure $\mathfrak{s}_i\colon W_i\to \BTOPSpin$, together with a choice of map $\ell_i\colon W_i\to S^1$ that induces an isomorphism on $\pi_1$, and a choice of map $h_i\colon W_i\to \CP^\infty$ that induces an isomorphism on $\pi_2$. By Lemmas~\ref{lem:ellcompatible} and~\ref{lem:spincompatible},  there exist choices of boundary parametrisation $g_i\colon L_i\cong S^1\times S^2$ and normal $2$-smoothings $\overline{\nu}_{W_i}=\mathfrak{s}_i\times \ell_i\times h_i$ such that the normal $2$-smoothing
\[
\overline{\nu}_{W_0}\sqcup \overline{\nu}_{W_1}\colon W_0\sqcup -W_1\to \BTOPSpin\times S^1\times \CP^\infty
\]
descends to a normal $B(0)$-structure on the closed $4$-manifold $W(g_0,g_1)$, denoted
\[
\mathfrak{s}\times\lambda\times\eta\colon W(g_0,g_1)\to \BTOPSpin\times S^1\times \CP^\infty.
\]
For the remainder of the proof, we abbreviate to $W:=W(g_0,g_1)$. By Lemma~\ref{lem:CPinftycompatible}, we may  choose the $h_i$ to be such that $\eta_*([W]) = 0 \in H_4(\CP^{\infty};\Z)$.

We wish to show that the normal $B(0)$-structure $(W, \mathfrak{s}\times\lambda\times\eta)$ vanishes in the group $\Omega_4(B(0),p_0)$. To prove this claim, we first compute the bordism group. Because $B(0)$ is a product, we have $\Omega_4(B(0),p_0)\cong\Omega^{\TOPSpin}_4(S^1\times\CP^\infty)$. We claim that
\[
\Omega^{\TOPSpin}_4(S^1\times\CP^\infty) \cong \Z\oplus\Z.
\]
where the first summand is given by the signature of $W$ divided by 8 and the second is given by $\eta_*([W])\in H_4(\CP^\infty;\Z)\cong\Z$ divided by 2. We fill in relevant entries in the $E_2$ page of the Atiyah-Hirzebruch spectral sequence as follows.

\begin{center}
\begin{tikzpicture}

\draw[step=1.0,black,xshift=1cm,yshift=1cm] (0,0) grid (6.5,5.5);

\node at (0.5,1.5) {$0$};
\node at (0.5,2.5) {$1$};
\node at (0.5,3.5) {$2$};
\node at (0.5,4.5) {$3$};
\node at (0.5,5.5) {$4$};
\node at (0.5,6.5) {$q$};

\node at (1.5,0.5) {$0$};
\node at (2.5,0.5) {$1$};
\node at (3.5,0.5) {$2$};
\node at (4.5,0.5) {$3$};
\node at (5.5,0.5) {$4$};
\node at (6.5,0.5) {$5$};
\node at (7.5,0.5) {$p$};

\node at (5.5,1.5) {$\Z$};
\node at (6.5,1.5) {$\Z$};

\node at (5.5,2.5) {$\Z/2$};
\node at (4.5,2.5) {$\Z/2$};
\node at (3.5,2.5) {$\Z/2$};
\node at (2.5,2.5) {$\Z/2$};
\node at (1.5,2.5) {$\Z/2$};

\node at (3.5,3.5) {$\Z/2$};
\node at (2.5,3.5) {$\Z/2$};

\node at (6.5,4.5) {$0$};
\node at (5.5,4.5) {$0$};
\node at (4.5,4.5) {$0$};
\node at (3.5,4.5) {$0$};
\node at (2.5,4.5) {$0$};
\node at (1.5,4.5) {$0$};

\node at (1.5,5.5) {$\Z$};

\node at (11,5) {$E_2^{r,s}\cong H_r(S^1\times \CP^\infty;\Omega_s^{\TOPSpin})$};

\end{tikzpicture}
\end{center}

The $d_2$ differentials are given by Proposition~\ref{cor:themeat}, and we compute them now. For $R=\Z/2$ or $R=\Z$, the K\"{u}nneth theorem gives
\[
H^*(S^1\times \CP^\infty;R)\cong H^*(S^1;R)\otimes H^*(\CP^\infty;R)\cong (R[t]/t^2)\otimes R[x]
\]
where $t\in H^1(S^1;R)$ and $x\in H^2(\CP^\infty;R)$ are generators. We compute that $\Sq^2(t\otimes 1)=0$, $\Sq^2(1\otimes x)=1\otimes x^2$, and by the Cartan formula
\begin{align*}
\Sq^2(t\otimes x)&=\Sq^2(t)\otimes x+\Sq^1(t)\otimes \Sq^1(x)+t\otimes\Sq^2(x)\\
&=0\otimes x+0\otimes 0+t\otimes x^2=t\otimes x^2.
\end{align*}
Thus $d_2^{3,1}=0$, $d_2^{4,1}$ is an isomorphism, and both $d_2^{4,0}$, $d_2^{5,0}$ are surjective with kernel $2\Z$. From this we compute $E_3^{2,2}=E_3^{3,1}=0$, and $E_3^{4,0} \cong 2\Z$.  There is potentially a nontrivial differential $d_3^{4,0} \colon E_3^{4,0} \cong 2\Z \to E_3^{1,2} \cong \Z/2$, and $E_{\infty}^{4,0} \cong \ker d_3^{4,0}$.

A choice of map $\pt \to S^1\times\CP^\infty$ induces $\Omega^{\TOPSpin}_4 \to \Omega^{\TOPSpin}_4(S^1\times\CP^\infty)$, which splits the map $\Omega^{\TOPSpin}_4(S^1\times\CP^\infty) \to \Omega^{\TOPSpin}_4$ induced by the unique map $S^1\times\CP^\infty \to \pt$. It follows that $\Omega^{\TOPSpin}_4$ is a direct summand of $\Omega^{\TOPSpin}_4(S^1\times\CP^\infty)$, and therefore every differential with image in $E_r^{0,4}$ vanishes. In particular the differential $d^5_{5,0}=0$, and therefore $E_{\infty}^{0,4} \cong E_2^{0,4} \cong \Omega^{\TOPSpin}_4 \cong \Z$.
We deduce that \[\Omega^{\TOPSpin}_4(S^1\times\CP^\infty)\cong E_{\infty}^{0,4}\oplus \ker(d_3^{4,0} \colon E_3^{4,0} \to E_3^{1,2}) \cong\Z\oplus \ker(d_3^{4,0} \colon 2\Z \to \Z/2).\]

The obstruction in $E_3^{4,0}\cong \Z\cong \Omega_4^{\TOPSpin}$ determined by $W$ is given by the signature of $W$ (divided by 8), which vanishes by Lemma~\ref{lem:CPinftycompatible}. The obstruction in $E_{\infty}^{4,0}\subseteq \Omega^{\TOPSpin}_4(S^1\times\CP^\infty)$ determined by $W$ is given by $\eta_*([W])\in H_4(\CP^\infty;\Z)$. Using Lemma~\ref{lem:CPinftycompatible}, we assumed that the map $\eta$ is such that this obstruction vanishes.
In addition, during the proof of Lemma~\ref{lem:CPinftycompatible} we showed that one can arrange for $\eta_*([W])$ to take any even value.  It follows that $E_{\infty}^{4,0} \subseteq E_2^{4,0} \cong \Z$ is the subgroup $2\Z$, and so in fact we must have $d_3^{4,0} =0$ and $\ker d_3^{4,0} = E_3^{4,0} \cong 2\Z$. We conclude that $\Omega^{\TOPSpin}_4(S^1\times\CP^\infty) \cong \Z \oplus 2\Z$, and that $(W,\mathfrak{s}\times\lambda\times\eta)$ is the trivial element of this group.

Hence $(W, \mathfrak{s}\times\lambda\times \eta)$ vanishes in the group $\Omega_4(B(n),p_n)$. By Lemma~\ref{lem:sanitycheck}, this implies that $(W_0,\overline{\nu}_{W_0})$ is cobordant to $(W_1,\overline{\nu}_{W_1})$ rel.~boundary. Choose such a bordism $(Z,\Xi)$. By Theorem~\ref{thm:kreckmain} this determines a surgery obstruction
$\theta(Z,\Xi)\in \displaystyle{L^{s,\tau}_5(\Z[\Z])}$, and if this obstruction vanishes then $(W_0,\overline{\nu}_{W_0})$ is $s$-cobordant to $(W_1,\overline{\nu}_{W_1})$ rel.~boundary.
As $\Wh(\Z)=0$, the exact sequence \eqref{eq:kreck} shows that the obstruction in fact lies in the subgroup $\displaystyle{L^{s}_5(\Z[\Z])}\subseteq \displaystyle{L^{s,\tau}_5(\Z[\Z])}$.
This obstruction group is well-known to be $\displaystyle{L_5^{s}(\Z[\Z])}\cong \displaystyle{L_4^{h}(\Z)}\cong 8\Z$, given by the signature of a closed codimension 1 submanifold that meets an embedded loop $\gamma\subseteq Z$ algebraically once, where $\gamma\subseteq Z$ is a loop such that $\Xi(\gamma)$ generates $\pi_1(B(0))\cong\Z$. This signature may be nonzero. To kill the signature, we take the internal $S^1$-sum between $Z$ and copies of $S^1\times E_8$, or $S^1\times -E_8$ as appropriate, identifying copies of $\gamma\subseteq Z$ in the former with representatives of the $S^1$ factors in the latter. After modifying $Z$ in this way, the obstruction in $\displaystyle{L^s_5(\Z[\Z])}$ vanishes and so the desired $s$-cobordism exists by Theorem~\ref{thm:kreckmain}. As $\Z$ is a good group, the Freedman-Quinn $s$-cobordism theorem~\cite[Theorem~7.1A]{FQ} implies there exists a homeomorphism $F\colon W_0\cong W_1$ restricting to the identity on $\partial X$ and to $g_1^{-1}\circ g_0\colon L_0\to L_1$ on this boundary component.
\end{proof}

\section{Proof of Theorem~\ref{theoremA}}\label{section:the-proof}

We now record two short lemmas before moving on to the proof of Theorem~\ref{theoremA}.

\begin{lemma}\label{lem:fix}
Let $X\simeq S^2$ be a compact 4-manifold with intersection form $(n)$, where $n \in \Z$.  Suppose $F\colon X\to X$ is an orientation preserving homeomorphism, restricting to the identity on $\partial X$, and such that the map $F_*\colon H_2(X)\to H_2(X)$ is multiplication by $-1$. Then $n\in\{\pm 1, \pm 2\}$.
\end{lemma}

\begin{proof}
By the universal coefficient theorem we have $F^*\colon H^2(X)\to H^2(X)$ is also multiplication by $-1$. Consider the commuting square with downwards maps given by $F^*$ and horizontal maps induced by inclusion
\[
\begin{tikzcd}
H^2(X)\ar[r]\ar[d, "-1"]
&H^2(\partial X)\ar[d, "1"]
\\
H^2(X)\ar[r]
&H^2(\partial X).
\end{tikzcd}
\]
It is straightforward to identify the horizontal maps with the quotient map $\Z\to\Z/n$. The commutativity of the square implies~$1\equiv-1\mod{n}$, which holds if and only if  $n \in \{\pm 1,\pm 2\}$.
\end{proof}

\begin{lemma}\label{lem:fix2}
Let $X\simeq S^2$ be a compact 4-manifold. Let $S\subseteq X$ be a spine with self-intersection~$n$, where $n\in\{\pm1, \pm 2\}$. Then there is an orientation preserving homeomorphism $F\colon X\to X$, fixing $\partial X$ pointwise, and such that $F|_S\colon S\to S$ is an orientation reversing homeomorphism.
\end{lemma}

\begin{proof}
Write $(D^2 \wt{\times}_n S^2, L(n,1))$ for the Euler number $n$ disc bundle over $S^2$. Choose an orientation preserving homeomorphism $G\colon \overline{\nu}(S)\cong D^2 \wt{\times}_n S^2$ that preserves the 0-section. Apply the orientation preserving map $\tau$ from Definition~\ref{defn:tau} to the disc bundle $D^2 \wt{\times}_n S^2$. When $n=\pm1 $, we have $L(n,1)\cong S^3$ and so any orientation preserving self-homeomorphism of $L(n,1)$ is isotopic to the identity. When $n=\pm 2$, we have $L(n,1)\cong \RP^3$, so by \cite[Th\'{e}or\`{e}me 3(e)]{MR710104} every orientation preserving self-homeomorphism of $L(n,1)$ is isotopic to the identity. Hence in either case, in a boundary collar of $L(n,1)$, we may insert an isotopy to obtain an orientation preserving self-homeomorphism $\widetilde{\tau}$ of $D^2 \wt{\times}_n S^2$, that fixes the boundary pointwise, that fixes the base~$S^2$ setwise, and is orientation reversing on the base~$S^2$ (Lemma~\ref{lem:taucalculation}). Extend the map $G^{-1}\circ\widetilde{\tau}\circ G$ from $\overline{\nu}S$ to $X$ by the identity, to obtain the desired~$F$.
\end{proof}

Now we have all the ingredients we need to complete the proof of Theorem~\ref{theoremA}, whose statement we recall for the convenience of the reader.

\begin{theorem}
  Let $X\simeq S^2$ be a compact 4-manifold. Let $S_0$ and $S_1$ be simple spines. If $[S_0]=[S_1] \in H_2(X)$ then $S_0$ and $S_1$ are ambiently isotopic via an isotopy that restricts to the identity on~$\partial X$.

\end{theorem}

\begin{proof}
By Proposition~\ref{prop:exists2spherehomeo} there is a homeomorphism $X \sm \nu S_0 \to X \sm \nu S_1$ restricting to the identity on $\partial X$ and to a homeomorphism $\partial \ol{\nu} S_0 \to \partial \ol{\nu} S_1$ that is identified, for suitable boundary parameterisations, with a self-homeomorphism of $S^1 \wt{\times}_n S^2 \cong L(n,1)$ that extends to an orientation preserving bundle isomorphism of the disc bundle $D^2 \wt{\times}_n S^2$.  Choose such an extension to extend the homeomorphism $\partial \ol{\nu} S_0 \to \partial \ol{\nu} S_1$ to an orientation preserving homeomorphism $\ol{\nu} S_0 \to \ol{\nu} S_1$ that preserves the 0-sections, i.e.~that maps $S_0$ to $S_1$
 via a homeomorphism.  We obtain an orientation preserving homeomorphism $F \colon X \to X$, fixing $\partial X$ pointwise and sending $S_0$ to $S_1$ via a homeomorphism (which is potentially orientation reversing).

Suppose $[S_0]=[S_1] \in H_2(X)$. The map $F_*$ on $H_2(X)$ is an isomorphism, so it is multiplication by $\pm1$. By Lemma~\ref{lem:fix}, when $n\not\in\{\pm 1,\pm 2\}$ we must have that $F_*$ is the identity map on $H_2(X)$, so that $F_*([S_0])=[S_0]=[S_1]$, and so $F\colon S_0\to S_1$ is orientation preserving. When $n\in\{\pm 1,\pm 2\}$, it is possible that $F_*$ is multiplication by $-1$ on $H_2$. If this is the case, modify $F$ by postcomposing with the map from Lemma~\ref{lem:fix2}, so that $F_*$ becomes the identity on $H_2$ and hence $F_*([S_0])=[S_0]=[S_1]$, and so $F\colon S_0\to S_1$ is orientation preserving. Now as $F_*$ is the identity on $H_2(X)$, by Theorem~\ref{corollaryC}~\cite[Corollary~C]{OP-mcgs}, $F$ is topologically isotopic rel.\ boundary to the identity on $X$. The resulting isotopy is a rel.\ boundary ambient isotopy between $S_1$ and~$S_0$. This completes the proof of the theorem.
\end{proof}

\begin{remark}
Suppose $X\simeq S^2$ is a compact 4-manifold with simple spines $S_0$ and $S_1$ such that $[S_0]=- [S_1]\in H_2(X)$. Then it is impossible that the spines are related by an ambient isotopy, because ambient isotopy preserves spine orientation. By Lemma~\ref{lem:fix}, when $n\not\in\{\pm1, \pm2\}$, it cannot even be the case that there is a homeomorphism of pairs $F\colon (X,S_0)\to (X,S_1)$, fixing~$\partial X$ pointwise, that is both orientation preserving from $X$ to $X$, and from $S_0$ to $S_1$. The anonymous referee asked whether such a homeomorphism \emph{does} exist in the cases $n\in\{\pm 1,\pm2\}$. We can answer this question affirmatively. To see this, follow the proof of Theorem~\ref{theoremA} to the end of the first paragraph. The map $F$ thus constructed is either orientation preserving from~$S_0$ to $S_1$ or not. If it is not, then postcompose $F$ with the map from Lemma~\ref{lem:fix2}, to produce the desired map.
\end{remark}

\bibliographystyle{alpha}
\bibliography{spines}
\end{document}